\newcommand{\p}{\partial}
\newcommand{\be}{\begin{equation}}
\newcommand{\ba}{\begin{aligned}}
\newcommand{\bee}{\begin{equation*}}
\newcommand{\ee}{\end{equation}}
\newcommand{\ea}{\end{aligned}}
\newcommand{\eee}{\end{equation*}}
\newcommand{\bea}{\begin{equation} \begin{aligned} }
\newcommand{\eea}{\end{aligned}\end{equation} }
\newcommand{\R}{\mathbb{R}}
\theoremstyle{plain}
\newtheorem{theorem}{Theorem}[section]
\newtheorem{corollary}[theorem]{Corollary}
\newtheorem{lemma}[theorem]{Lemma}
\newtheorem{proposition}[theorem]{Proposition}
\theoremstyle{remark}
\newtheorem{remark}[theorem]{Remark}
\newcommand{\inn}[2]{\langle#1,#2\rangle}
\theoremstyle{definition}
\newtheorem{definition}[theorem]{Definition}
\numberwithin{equation}{section}
\title{Ancient Gauss curvature flows of bounded width}
\author{Beomjun Choi}
\address{BC: Department of Mathematical Sciences, KAIST, 291 Daehak-ro, Yuseong-gu, Daejeon 34141, Republic of Korea}
\email{bchoi@kaist.ac.kr}
\author{Kyeongsu Choi}
\address{KC: Korea Institute for Advanced Study, 85 Hoegi-ro, Dongdaemun-gu, Seoul 02455, Republic of Korea}
\email{choiks@kias.re.kr}
\author{Dongjun Noh}
\address{DN: Department of Mathematics, POSTECH, 77 Cheongam-ro, Nam-gu, Pohang, Gyeongbuk 37673, Republic of Korea}
\email{ndj0727@postech.ac.kr}
\begin{document}

\begin{abstract}
In this paper, we construct a pancake-like ancient compact solution with flat sides to the Gauss curvature flow, contained in a slab. Also, we construct sausage-like ancient  compact solutions to the $\alpha$-Gauss curvature flow with $\alpha >\frac{1}{2}$, asymptotic to a round cylinder.
\end{abstract}

\maketitle

\tableofcontents

\section{Introduction}\label{sec 1}

In 1974 \cite{firey1974shapes}, Firey introduced the Gauss curvature flow as a model for the wear of stones by tidal waves. More precisely, since the Gauss curvature describes the infinitesimal area distortion under the Gauss map, he considered the evolution of convex surfaces $\Sigma_t \subset \mathbb{R}^3$ shrinking by their Gauss curvature. This is a natural geometric parabolic Monge-Ampere equation, since a solution to the flow is a one-parameter family of convex hypersurfaces $\Sigma_t\subset \mathbb{R}^{n+1}$ with embeddings $X:M^n\times [0,T)\to \mathbb{R}^{n+1}$ such that $\Sigma_t=X(M^n,t)$ and
\begin{equation}
    X_t=-K \nu,
\end{equation}
where $K$ and $\nu$ denote the Gauss-Kronecker curvature and the outward-pointing unit normal vector of $\Sigma_t$, respectively.

\bigskip

In 1994, Hamilton \cite{hamilton1994remarks} pointed out a striking difference between the Gauss curvature flow and the mean curvature flow, namely that flat sides persist for a positive time under the Gauss curvature flow. If the initial hypersurface is weakly convex, then the mean curvature flow immediately makes it strictly convex. In contrast, Hamilton constructed barriers for the Gauss curvature flow with flat sides, demonstrating their persistence for a positive time under the evolution. Later, Chopp-Evans-Ishii \cite{chopp1999waiting} show that if a flow with flat sides is of class $C^3$ then its flat sides do not move at all for positive time.

\bigskip

The persistence of flat sides of the Gauss curvature flow is closely related to the existence of compactly supported solutions $u\geq 0$ to the porous medium equation with high exponent $m>1$
\begin{equation}
    u_t=\Delta u^m,
\end{equation}
which is the so-called slow diffusion. Indeed, analogous to the waiting flat sides in the smooth Gauss curvature flow, if the initial data $u(\cdot,0)$ to the porous medium equation is compactly supported and smooth, then its support does not move for positive time. See \cite{aronson1970regularity, knerr1977porous}. On the other hand, the compact support moves once its pressure $u^{m-1}$ becomes non-degenerate as shown by Daskalopoulos and Hamilton \cite{daskalopoulos1998regularity}. They observed in \cite{daskalopoulos1999free} that the Gauss curvature flow with flat side also has a function which plays the role the pressure in the porous medium equation. More precisely, near a flat side, we consider the flow as the graph of a convex non-negative function $u$ with flat side $\{(x,0) \in \mathbb{R}^3: u(x,t)=0\}$. Then, $u$ solves
\begin{equation}
    u_t=\frac{\det D^2u}{(1+|Du|^2)^{\frac{3}{2}}},
\end{equation}
and its square root $w:=\sqrt{u}$ plays the role of the pressure function. Hence, if the flat side is uniformly convex and the pressure enjoys the optimal regularity and non-degeneracy
\begin{equation}\label{eq:nondegenerate}
    C^{-1} \leq |Dw| \leq C,
\end{equation}
then the flat side shrinks with positive speed for short time, preserving the condition \eqref{eq:nondegenerate}. Later, Daskalopoulos and K.A.Lee \cite{daskalopoulos2004worn} proved the long time existence that the flat side shrinks with positive speed until it converges to a point. See also the higher dimensional analogue \cite{HWZ2024GCF}, and further related works \cite{savin2005obstacle,HTW24MAfree,DL12MAfree,KLR013aGCF}. Note that Andrews \cite{andrews1999gauss} showed that any compact viscosity solution to the Gauss curvature flow in $\mathbb{R}^3$ becomes of class $C^{1,1}$ immediately, namely it enjoys $C^{1,1}$-regularity for all positive time. Thus, due to the persistence of \eqref{eq:nondegenerate} near the flat side, the $C^{1,1}$-regularity is optimal. Furthermore, Andrews \cite{andrews1999gauss} showed that any compact Gauss curvature flow in $\mathbb{R}^3$ becomes strictly convex in finite time, and then it converges to a round point. Therefore, any possible flat side disappears before the flow develops a singularity.

\bigskip
\bigskip

In many works, the flow was generalized to other dimensions and to powers of the Gauss-Kronecker curvature, giving rise to the $\alpha$-\textit{Gauss curvature flow} ($\alpha$-GCF). Given a fixed $\alpha>0$, the $\alpha$-Gauss curvature flow in $\mathbb{R}^{n+1}$ is a one-parameter family of convex hypersurfaces $\Sigma_t\subset \mathbb{R}^{n+1}$ with embeddings $X:M^n\times [0,T)\to \mathbb{R}^{n+1}$ such that $\Sigma_t=X(M^n,t)$ and
\begin{equation}
    X_t=-K^\alpha \nu,
\end{equation}
where $K$ and $\nu$ denote the Gauss-Kronecker curvature and the outward-pointing unit normal vector of $\Sigma_t$, respectively. In particular, we refer to the case $\alpha = 1, n\geq 2$ as the Gauss curvature flow, to the case $\alpha=1,n=1$ as the \textit{curve shortening flow}, to the case $\alpha >0, n=1$ as the $\alpha$-\textit{curve shortening flow}, and to the case $\alpha = \frac{1}{n+2}, n \geq 1 $ as the \textit{affine normal flow}.

\bigskip

We call $\alpha=\frac{1}{n+2}$ the affine critical power, since a solution to the affine normal flow remains as a solution under volume-preserving affine transformations of the ambient space $\mathbb{R}^{n+1}$. Therefore, any ellipsoid is a self-shrinker to the affine normal flow. Indeed, Andrews \cite{andrews1996contraction} showed that any compact solution to the flow converges to an ellipsoid after rescaling. In contrast, for the super-affine-critical power $\alpha>\frac{1}{n+2}$, the convergence of a compact solution in $\mathbb{R}^{n+1}$ to a round sphere after rescaling is proven by many authors. See the direct proofs of convergence to a round sphere for $\alpha =\frac{1}{n}, n \geq 2$ by Chow \cite{chow1985deforming}, for $\alpha=1, n=2$ by Andrews \cite{andrews1999gauss}, and for $\alpha \in [\frac{1}{2},2], n=2$ by Andrews-Chen \cite{andrews2012surfaces}. For the general case, the result can be proven by combining convergence to shrinkers with classification of shrinkers. See the convergence to a shrinker for $\alpha \in [\frac{1}{n+2},\frac{1}{n}]$ by Andrews \cite{andrews2000motion}, for $\alpha=1$ by Gaun-Ni \cite{guan2017entropy}, and for $\alpha \geq \frac{1}{n+2}$ for Andrews-Gaun-Ni \cite{andrews2016flow}. The classification of self-similar solutions was obtained by the second author for $\alpha\in [\frac{1}{n},1+\frac{1}{n})$ in his thesis \cite{choi2017gauss}, and for $\alpha \geq \frac{1}{n+2}$ by the second author together with Brendle-Daskalopoulos \cite{brendle2017asymptotic}. See also the convergence result in $\mathbb{R}^2$ for $\alpha>\frac{1}{3}$ by Andrews \cite{andrews1998evolving}.

\bigskip

On the other hand, for the sub-affine-critical power $\alpha \in (0,\frac{1}{n+2})$, a compact solution to the $\alpha$-GCF ($\alpha$-CSF in $\mathbb{R}^2$) would generically develop a Type II singularity. For example, see \cite{andrews2002non} for $n=1$. This is because the $\alpha$-flow has an entropy \cite{andrews2016flow} which has a universal lower bound for $\alpha \geq \frac{1}{n+2}$ and a universal upper bound for $\alpha \leq \frac{1}{n+2}$; See also \cite{choi2025classification}. Hence, for the small power $\alpha \in (0,\frac{1}{n+2}]$, we can expect the classification result for compact \textit{ancient flows}, which are solutions that exist from negative infinite time. See the classification results for shrinkers in $\mathbb{R}^2$ by Andrews \cite{andrews2003classification}, for compact ancient flows with $\alpha<\frac{1}{3},n=1$ by the second author together with Sun \cite{choi2022ancient,choi2025classification}, for compact ancient affine normal flows in $\mathbb{R}^2$ by Chen \cite{chen2015classifying} and Ivaki \cite{ivaki2016classification}, and for compact ancient affine normal flows with $n \geq 2$ by Loftin-Tsui \cite{loftin2008ancient}. Note that these compact ancient flows with $\alpha \in (0,\frac{1}{n+2}]$ converge to shrinkers after rescaling as time goes negative infinite, namely they are Type I ancient flows. This convergence is easily expected due to the universal entropy upper bound for compact strictly convex bodies. We also note that for sub-affine-critical powers, non-compact singularity models such as translators may appear in Type II singularities. A series of recent works by the first and second authors together with S. Kim \cite{CCK_translating, CCK_existence, CHOI2025113849} classified all such translators for $n=2$ and $\alpha \in (0,\tfrac{1}{4})$. These translators are entire smooth graphical solutions whose level curves are modeled on Andrews’ shrinkers \cite{andrews2003classification}.

\bigskip

In contrast, for $\alpha >\frac{1}{n+2}$ when there is no universal upper bound of the entropy, there are Type II compact ancient flows by power $\alpha$ of the Gauss curvature. For example, the \textit{paperclip} is a well-known compact Type II ancient curve shortening flow, which is obtained by gluing two \textit{grim reaper} curves, the unique translator. Indeed, Daskalopoulos-Hamilton-Sesume \cite{daskalopoulos_classification_2010} proved that a convex compact ancient curve shortening flow is either a shrinking circle or a paperclip. See also the uniqueness of Type I compact ancient CSFs by X.J. Wang \cite{wang2011convex}, and the classification of complete ancient CSFs by Bourni-Langford-Tinaglia \cite{bourni2020convex}. For $\alpha>\frac{1}{2},n=1$ by solving a second order ODE one can easily observe that the translator is contained in a slab. In particular, if $\alpha> 1,n=1$ then the translator has the flat sides; see the proof of Lemma \ref{alpha > 1 flat side}. Indeed, for all $n\geq 1$, given any convex bounded open domain $\Omega\subset \mathbb{R}^n$ there is a unique translator with $\alpha>\frac{1}{2}$ asymptotic to $\partial\Omega\times \mathbb{R}$ by Urbas \cite{urbas1988global,urbas1999complete}. More precisely, the translator is the graph of a strictly convex function $u \in C^\infty(\Omega)$ in $\Omega\times \mathbb{R}$ such that $|Du(x)|\to \infty$ as $x\to \partial\Omega$. Note that if $u(x)\to u(x_0) < +\infty$ as $x\to x_0 \in \partial x_0$, then the translator includes a ray $\{(x_0,t):t \geq u(x_0)\}$, which can be a part of a flat side. Therefore, for $\alpha>1,n=1$ one can immediately obtain an ancient flow with two flat sides by placing two translating flows in opposite directions; See subsection \ref{subsec 6.2}. We call such compact ancient flows in a slab as \textit{paperclips}. Bourni-Clutterbuck-X.H.Nguyen-Stancu-G.Wei-V.M.Wheeler \cite{bourni2022ancient} construct a strictly convex paperclip for each $\alpha\in (\frac{1}{2},1)$, and they also showed that a compact ancient flow with $\alpha\in (\frac{2}{3},1)$ and $n=1$ is either a shirking circle and a paperclip as the result in $\alpha=n=1$ \cite{daskalopoulos_classification_2010}. For higher dimensions $n\geq 2$, together with Daskalopoulos the first and second authors \cite{choi2024uniqueness} showed the existence and the uniqueness of a compact ancient Gauss curvature flow asymptotic to $\partial \Omega \times \mathbb{R}$, where $\Omega\subset \mathbb{R}^n$ is a convex bounded domain with $C^{1,1}$ boundary $\partial \Omega$. 

 \bigskip

\subsection{Main theorems}
\label{sec 1.1}

In this paper, we construct compact symmetric ancient solutions to $\alpha$-Gauss curvature flow in $\mathbb{R}^3$, contained in a slab $I\times \mathbb{R}^2$ or in a bounded cylinder $\Omega \times \mathbb{R}$.

\begin{figure}[htbp]
    \centering
    \includegraphics[width=0.8\linewidth]{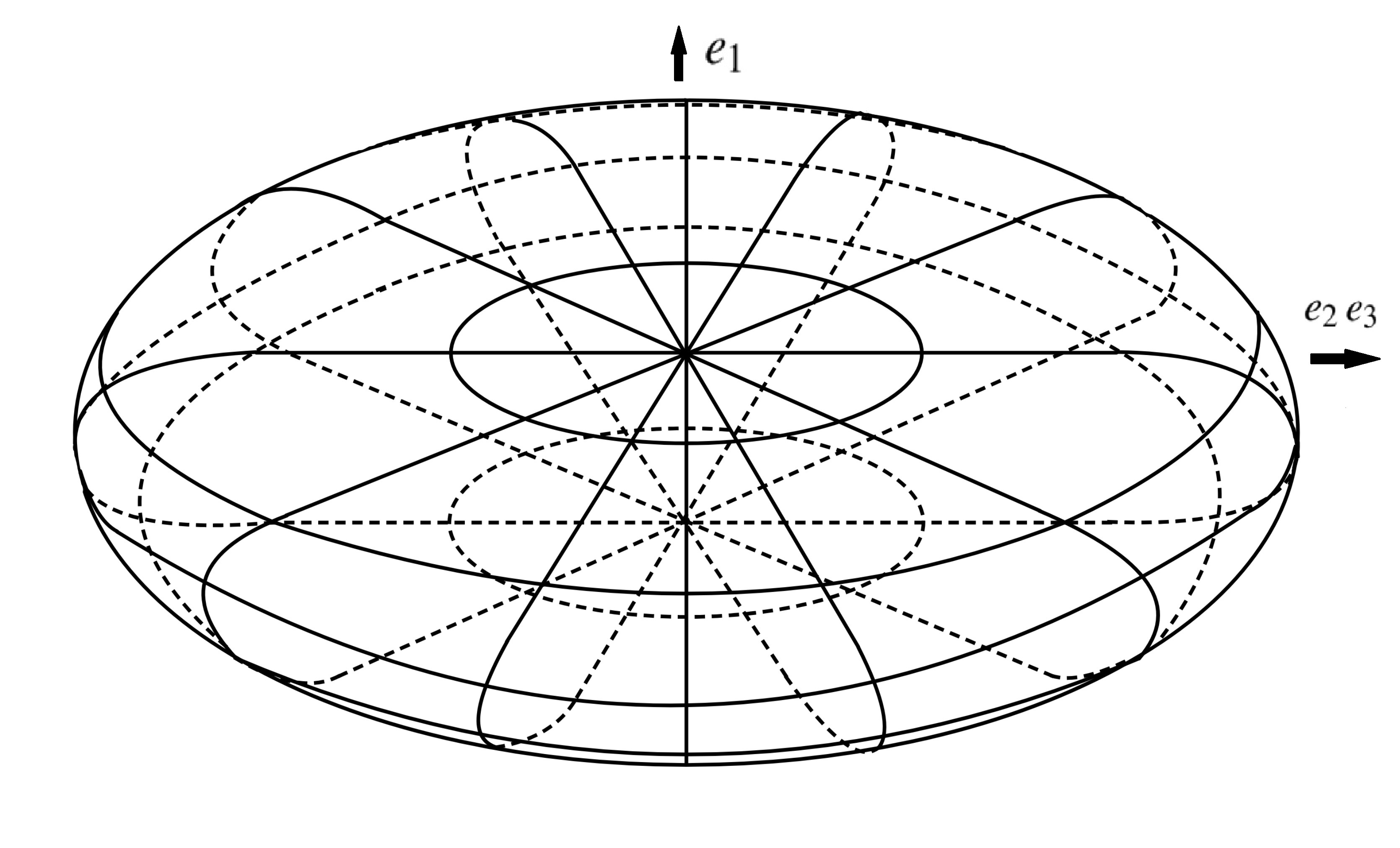}
    \caption{Ancient pancake}
    \label{fig:pancake solution}
\end{figure}

In \cite{bourni2021collapsing}, Bourni-Langford-Tinaglia constructed a so-called \textit{ancient pancake}, an $Z_2\times O(n)$-invariant ancient solution to mean curvature flow, which is asymptotic to $\{ \mathbf{x} \cdot e_1= \pm \alpha\}$ as time goes to negative infinite for some $\alpha \in (0,+\infty)$. See Figure \ref{fig:pancake solution}. Moreover, they showed that a $O(n)$-symmetric ancient flow asymptotic to the hyperplanes $\{ \mathbf{x} \cdot e_1  = \pm \alpha\}$ is unique. Similarly, we also call an ancient solution to the Gauss curvature flow as ancient pancake if it is $Z_2\times O(n)$-symmetric and asymptotic to two hyperplanes as time goes to negative infinite.

\begin{theorem}[ancient pancake with flat sides]
\label{Main theorem (slab)}
The Gauss curvature flow in $\mathbb{R}^3$ has an ancient pancake with flat sides, which is a viscosity solution of class $C^{1,1}$.
\end{theorem}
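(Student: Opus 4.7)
The plan is to realize the ancient pancake as a subsequential Hausdorff limit of a family of finite-time Gauss curvature flows starting from very flat, very wide, symmetric, flat-sided initial data inscribed in a fixed slab. Fix the target slab $S := \{|x_1| \le 1\}\subset \mathbb{R}^3$. For each large parameter $R>0$ one first constructs a $\mathbb{Z}_2 \times O(2)$-symmetric $C^{1,1}$ convex body $\Sigma^R_0$ inscribed in $S$ whose flat sides on $\{x_1 = \pm 1\}$ are disks of radius $R$ with non-degenerate pressure, and whose rounded rim joining the two flat disks has principal curvatures bounded independently of $R$; concretely, a smooth rounding of the convex hull of two parallel disks of radius $R$, with an $R$-independent rounding profile, suffices.

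Each $\Sigma^R_0$ is then evolved forward by GCF as a viscosity solution $\Sigma^R_t$ for $t\ge 0$. By Andrews \cite{andrews1999gauss} this solution is $C^{1,1}$, compact and convex, and contracts to a round point in finite time. The hyperplanes $\{x_1 = \pm 1\}$ are stationary viscosity solutions, so the comparison principle traps $\Sigma^R_t \subset S$ for all $t$. Invoking the Daskalopoulos--K.A.~Lee theory \cite{daskalopoulos2004worn} together with the non-degeneracy of the initial pressure, I expect the flat side of $\Sigma^R_t$ to shrink at a speed bounded above and below by positive constants \emph{independent of $R$}; in particular it extinguishes at a time $T^*_R \asymp R \to \infty$. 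Let $\tau_R \in (0, T^*_R)$ be the unique time at which the flat side has, say, unit radius, and time-translate by $\tau_R$ to obtain $\widetilde\Sigma^R_t$ defined on $(-\tau_R,\,T^*_R - \tau_R)$ with $\tau_R \to \infty$ and $T^*_R - \tau_R$ bounded.

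Hausdorff compactness of convex bodies inside $S$, Andrews' $C^{1,1}$-estimate, and interior parabolic regularity on the strictly convex portion of each flow together permit extraction of a subsequential limit $\Sigma^\infty_t$ defined on $(-\infty, t_{\mathrm{ext}})$ for some $t_{\mathrm{ext}} > 0$. The limit is a compact viscosity solution; it inherits the $\mathbb{Z}_2 \times O(2)$-symmetry and the slab confinement, has a flat side of positive radius at every $t$ in its domain (by the uniform lower bound on the shrinking speed), and its flat-side radius tends to $+\infty$ as $t\to -\infty$ (by the uniform upper bound, run backward). Hence $\Sigma^\infty_t$ is asymptotic to $\partial S = \{|x_1| = 1\}$ locally uniformly in $\mathbb{R}^3$, which is the desired ancient pancake; the $C^{1,1}$-regularity then follows from Andrews' theorem applied at any positive-time slice of $\Sigma^\infty_t$.

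The main obstacle will be the uniform-in-$R$ two-sided control of the flat-side shrinking speed: without an $R$-independent upper bound, the limit risks collapsing onto the hyperplane $\{x_1 = \pm 1\}$ across positive times as well, while without an $R$-independent lower bound, the limit risks losing its flat side as soon as one shifts backward. Securing both bounds hinges on choosing the initial data $\Sigma^R_0$ so that the pressure non-degeneracy constant and the curvature of the rounded rim are uniform in $R$, and then propagating those bounds through the Daskalopoulos--Lee free-boundary analysis. Once this uniform flat-side control is established, the rest of the construction is a standard ancient-limit compactness argument, similar in spirit to the Bourni--Langford--Tinaglia construction of ancient pancakes for mean curvature flow \cite{bourni2021collapsing}.
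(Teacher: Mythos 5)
You correctly identify the crux of your proposal as a gap, and it is indeed fatal as stated: everything hinges on a uniform-in-$R$ two-sided bound on the free-boundary (flat-side) speed, and nothing in Daskalopoulos--Lee \cite{daskalopoulos2004worn} delivers that. That theory propagates the pressure non-degeneracy $C^{-1}\le|Dw|\le C$ forward in time for a \emph{fixed} initial surface, with constants that depend on global geometric quantities of that surface; there is no statement that the free-boundary speed is pinched between positive constants that are independent of the size $R$ of the flat disk. Without it, your normalization time $\tau_R$ could fail to diverge, the time-shifted flows could degenerate into the hyperplanes $\{x_1=\pm 1\}$, and the claim that the flat-side radius tends to $+\infty$ as $t\to -\infty$ in the limit has no support. (Separately, $T^*_R\asymp R$ is off: the enclosed volume is $\sim R^2$ and decreases at the constant rate $\int K\,d\mu = 4\pi$, so the extinction time scales like $R^2$; this is cosmetic, but it signals that the quantitative control is not in place.) You also need the limit to be a viscosity solution on each compact time interval, which requires a compactness/stability statement for viscosity solutions of GCF that your sketch only gestures at.

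The paper avoids these difficulties by running the limit from the opposite side: the approximating flows $\Sigma^j_t$ are \emph{strictly convex}, obtained by revolving the curve-shortening paperclip about the $e_1$-axis and normalizing by enclosed volume (so $\operatorname{Vol}(\Sigma^j_t)=4\pi(-t)$). No free-boundary analysis on the approximants is needed. The quantitative control that replaces your missing uniform speed bound comes from two ingredients: (i) a preserved curvature-ratio inequality $\lambda_1\ge\lambda_2$ for $Z_2\times O(2)$-symmetric GCF (Theorem~\ref{thm:ratio_min}), verified directly on the paperclip revolution, which forces the minimum of $\lambda_2$ (hence of the Gauss curvature) to be attained on the rotation axis; and (ii) the touching-circle estimate of Bourni--Langford--Tinaglia (Proposition~\ref{prop-touching}), which together with (i) yields the ODE inequality $-h'\le (2h/(l^2+h^2))^2$ (Corollary~\ref{cor:height_speed_pancake}). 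Integrating this gives a uniform lower bound $h_j(t)\ge\tfrac{99}{200}\pi$ for $t\le -10^5$ and $j$ large (Lemma~\ref{lem:uniform_bound_displacement}), which is exactly the non-collapsing statement you were trying to extract from the free-boundary theory. The limit is then produced via Blaschke selection plus a viscosity-solution compactness lemma (Lemma~\ref{lemma-extra}), and the flat sides are exhibited in the limit by an explicit inner subsolution barrier --- the ``frying pan'' $\Phi^b_t$ of Lemma~\ref{lem:frying_pan} --- which forces a flat disk of radius $\ge\sqrt{-t/2}$ at $x_1=\pm\tfrac{\pi}{2}$. The $C^{1,1}$ regularity is then free by Andrews \cite{andrews1999gauss}. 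In short: the paper never normalizes by flat-side radius, never needs uniform free-boundary bounds, and obtains the flat sides as an output (via barriers) rather than as an input whose size must be controlled. Your proposal is not unreasonable as a program, but to complete it you would essentially have to prove a uniform version of the Daskalopoulos--Lee estimates, which is a substantial new result; the paper's route sidesteps that entirely.
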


The ancient pancake to the mean curvature flow is smooth and strictly convex due to the strong maximum principle. In contrast, the Gauss curvature flow allows flat sides, since its ellipticity degenerates as the principal curvatures approach zero. The existence of ancient Gauss curvature flows with flat sides was first conjectured by Hamilton. He conjectured that a translator over the domain $\Omega\subset \mathbb{R}^2$ has flat sides if $\partial \Omega$ is a polygon. Later, together with K.A.Lee-Daskalopoulos, the second author \cite{choi2021translating} gave an affirmative answer: if $\partial\Omega$ contains a flat segment $I\subset \partial \Omega$, then the translator has a flat side on $I\times \mathbb{R}$ which asymptotic to the two lines $\partial I \times \mathbb{R}$. Our theorem \ref{Main theorem (slab)} provides the first existence result of an ancient non-self-similar flow with flat side.

\begin{figure}[htbp]
    \centering
    \includegraphics[width=0.9\linewidth]{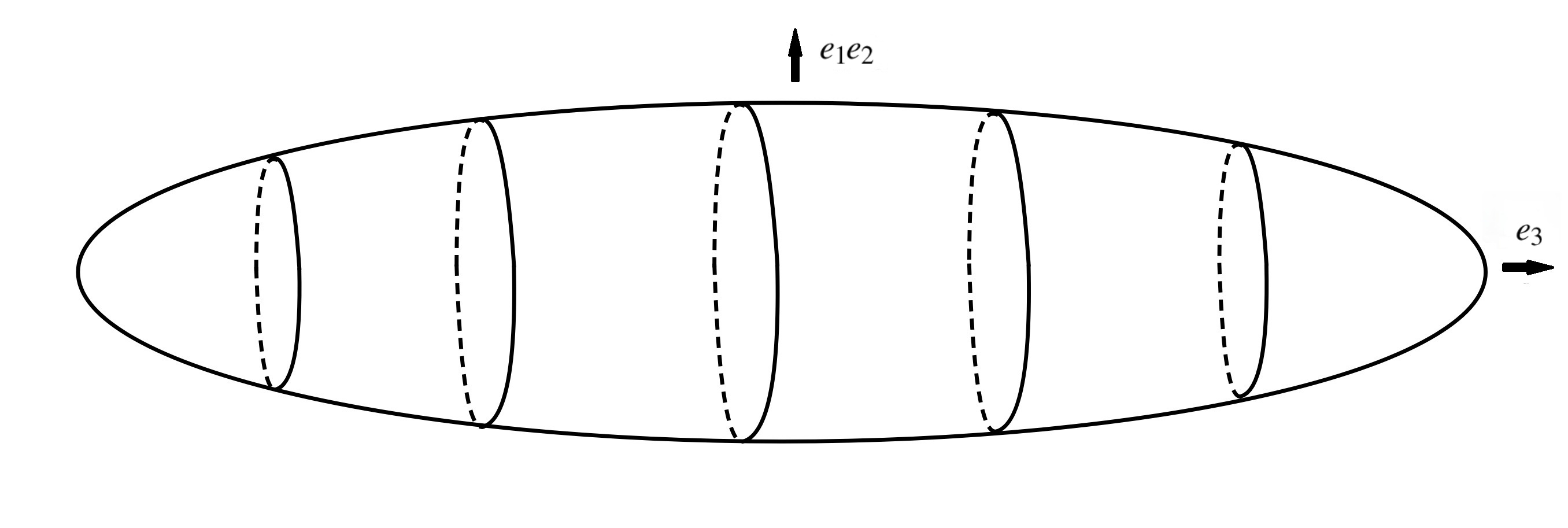}
    \caption{Ancient sausage}
    \label{fig:sausage solution}
\end{figure}

Next, we generalize the results in \cite{choi2024uniqueness,bourni2022ancient} for $n=2, \alpha >\frac{1}{2}$. More precisely, we consider \textit{ancient sausages}, which are compact ancient solutions with $O(n)\times Z_2$-symmetry, asymptotic to a cylinder $\partial B \times \mathbb{R}$ for a ball $B \subset \mathbb{R}^n$ as time goes negative infinite. See the analogue ancient solution to the two-dimensional Ricci flow discovered by King \cite{king1993exact,king1994} and Rosenau \cite{rosenau1995fast}, which can be obtained by gluing two cigar solitons in \cite{hamilton1988ricci}. See also the classification of compact ancient two-dimensioal Ricci flows by Daskalopoulos-Hamilton-Sesume \cite{daskalopoulos2012classification}.

\begin{theorem}[ancient sausage]
\label{Main theorem (cylinder)}
   The $\alpha$-Gauss curvature flow in $\mathbb{R}^3$ with $\alpha>\frac{1}{2}$ has an ancient sausage, which is a viscosity solution. In particular, for $\alpha>1$ it touches the asymptotic cylinder for negative enough time.
\end{theorem}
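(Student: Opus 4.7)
The plan is to obtain the ancient sausage as a smooth limit of an ``old-but-not-ancient'' sequence of compact $\alpha$-GCFs, using the Urbas translator over a disk $B\subset\R^2$ as the key interior barrier and the cylinder $\partial B\times\R$ as the exterior barrier. Fix $B\subset\R^2$ of radius $r>0$. For $\alpha>\tfrac{1}{2}$, Urbas supplies a unique $O(2)$-symmetric translator $\Sigma^{\rm tr}$, the graph of a strictly convex $u\in C^\infty(B)$ with $|Du(x)|\to\infty$ as $x\to\partial B$, translating with unit vertical speed under the $\alpha$-GCF.

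For each $k\in\N$, I would choose a smooth, strictly convex, $O(2)\times\Z_2$-symmetric closed hypersurface $\Sigma_0^k\subset B\times\R$ that essentially matches two copies of $\Sigma^{\rm tr}$ placed head-to-head at axial heights approximately $\pm k$, smoothly joined in a narrow transition band and strictly contained in the cylinder. Let $\Sigma_t^k$ denote the $\alpha$-GCF starting from $\Sigma_0^k$ at some time $t=-T_k$, with $T_k$ chosen so that a normalizing condition (e.g.\ equatorial cross-sectional radius equal to $r/2$) holds at the common time $t=0$.

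Two-sided barrier estimates then control the limit. The cylinder $\partial B\times\R$ has vanishing Gauss curvature, hence is a stationary (weak) $\alpha$-GCF, and the comparison principle gives $\Sigma_t^k\subset B\times\R$ throughout its lifespan. Conversely, by the construction of $\Sigma_0^k$, for every fixed height $M>0$ and all sufficiently large $k$ the surface $\Sigma_{-T_k}^k$ encloses both $\Sigma^{\rm tr}+Me_3$ and its $\Z_2$-reflection, so by comparison this inclusion propagates forward for all later times. This yields uniform lower bounds on the cross-sectional width at each fixed height. Combined with the outer barrier, the flows $\Sigma_t^k$ enjoy uniform $C^{1,1}$-bounds on compact subsets of $\R^3\times(-\infty,0)$ via Andrews' regularity theory for $\alpha$-GCF, and a subsequence converges smoothly on strictly convex spacetime regions and as viscosity solutions elsewhere. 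The limit $\Sigma_t^\infty$ is $O(2)\times\Z_2$-invariant, compact, and asymptotic to $\partial B\times\R$ as $t\to-\infty$.

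For the flat-side statement when $\alpha>1$, I invoke Lemma \ref{alpha > 1 flat side}: the Urbas translator itself has flat sides lying on $\partial B\times\R$. Since $\Sigma_t^\infty$ encloses arbitrarily tall vertical translates of $\Sigma^{\rm tr}$ for sufficiently negative $t$, these flat sides transfer to $\Sigma_t^\infty$, so it touches the asymptotic cylinder at all sufficiently negative times. The main obstacle is the interior barrier step: one must rule out the possibility that the sausage collapses to a thinner cylinder or to a lower-dimensional set in the limit. This requires carefully tracking the axial placement of the translator barriers inside $\Sigma_0^k$ and verifying a viscosity-level comparison principle against the degenerate cylindrical barrier, handled either by smooth approximation via large spheres or within the standard $\alpha$-GCF viscosity framework.
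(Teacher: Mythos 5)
Your proposal captures the right intuition — ancient sausage as a limit of old-but-not-ancient flows pinned between the cylinder outside and translators inside — but there are several concrete gaps, and the paper's route is substantially different.

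First, the inner-barrier step is not closed as stated, and it is the crux. The Urbas translator is an unbounded convex hypersurface (for $\alpha\in(\tfrac12,1)$ it is a complete graph asymptotic to $\partial B\times\mathbb{R}$), so the phrase ``$\Sigma^k_{-T_k}$ encloses $\Sigma^{\rm tr}+Me_3$'' cannot be taken literally: an unbounded surface is never enclosed by a compact one, and the usual comparison principle (Definition \ref{def-weaksol}, Proposition \ref{prop:viscosity_uniqueness}) applies to compact convex bodies. Moreover the translator translates upward while the upper cap of the sausage moves downward, so any truncated translator piece eventually exits the region; tracking this requires exactly the kind of ``carefully tracking the axial placement'' you flag as the main obstacle, but do not resolve. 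The paper sidesteps inner geometric barriers entirely: the lower bound on $l_i(t)$ follows from a preserved speed inequality $K^\alpha\ge|\cos\varphi|$ (Lemma \ref{barrier lemma}, a comparison of support functions with the translator in normal angle, not a spatial inclusion), and the lower bound on $h_i(t)$ — the genuine non-collapse statement — comes from the touching-circle curvature bound (Proposition \ref{prop-touching}, used in Proposition \ref{prop-h_ilowerbd}), which gives $K(\tfrac\pi2,t)\le C_\alpha|t|^{-2}$ and hence $h_i(t)\ge h_i(-T_i)-\tfrac{C_\alpha^\alpha}{2\alpha-1}|t|^{1-2\alpha}$. These are a priori estimates on the flow, not containment arguments.

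Second, your construction of $\Sigma^k_0$ by ``smoothly joining two translators in a narrow transition band'' is nontrivial for $\alpha\in(\tfrac12,1)$, because the two translators never meet (they are complete and only approach $\partial B\times\mathbb{R}$ asymptotically), so the joining annulus carries all the geometry near the equator and there is no control there without further work. The paper instead builds $\Sigma^i$ by prescribing the Gauss curvature $K^\alpha=(\cos^2\varphi+i^{-2})^{1/2}$ and invoking Nirenberg's theorem; this makes the initial displacements $h_i(-T_i)\nearrow r_\alpha$ computable in closed form (Lemma \ref{lemma-cigarproperty}(4)) and feeds directly into the volume bound $V_i(t)\le 2\pi r_\alpha^2|t|$ (Proposition \ref{prop-volupperbd}), which you do not have a substitute for.

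Third, the appeal to ``Andrews' $C^{1,1}$ regularity for $\alpha$-GCF'' is not available: that regularity theorem in \cite{andrews1999gauss} is specific to $\alpha=1$, $n=2$. The paper obtains smoothness of the limit differently, via \cite[Proposition 2.8]{choi2024uniqueness} (local smoothness away from the initial surface) applied once $h(t)<r_\alpha$ is established.

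Finally, for $\alpha>1$ the paper does not take any limit at all: it directly checks that the surface
\[
\bar\Sigma_t=\partial\{(x_1,x_2,x_3): u(x_1,x_2)-M+t<x_3<-u(x_1,x_2)+M-t\}
\]
obtained by gluing two translators along the cylindrical band is simultaneously a viscosity sub- and supersolution for $t\le 0$, hence is the ancient solution itself, and only then runs the flow forward for $t>0$. Your flat-side deduction (``flat sides transfer by inclusion'') has the same issue as the inner barrier: you never establish that a translate of $\Sigma^{\rm tr}$ (or even a truncated piece reaching $\partial B_{r_\alpha}$) is contained in $\Sigma^\infty_t$, only that it is contained in $\Sigma^k_{-T_k}$ initially, and the containment does not propagate cleanly because the translator is unbounded and in motion. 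The direct identification $\Sigma_t=\bar\Sigma_t$ for $t\le 0$ makes the flat side manifest without any such argument.
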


\subsection{Outline of the paper}
\label{sec 1.2}
In Section 2, we recall known results which will be used in the following sections. In Section \ref{sec 3} and \ref{sec 5}, we locate the minimum points for the Gauss curvature $K$ on the rotationally symmetric surface under the flow. The minimum is important to use a touching circle technique introduced in \cite[ Lemma 4.4]{bourni2021collapsing}.  By using the minimum Gauss curvature, in Section \ref{sec 4} and \ref{sec 6}, we show that approximate flows has some uniform bounds for width so that we prove Theorem \ref{Main theorem (slab)} and \ref{Main theorem (cylinder)}, respectively.

\bigskip

\section{Preliminaries} 
\label{sec 2}

Note that ancient solutions to the $\alpha$-GCF are not a priori smooth. Indeed \cite{chopp1999waiting} implies that our ancient pancake with flat sides can not be globally smooth. We choose to work with a weak notion of solution to the $\alpha$-Gauss curvature flow, for $\alpha>0$. 

\begin{definition}[viscosity solution, c.f. Section 8.2 \cite{andrews2000motion},  Definition 2.6 \cite{choi2022convergence}]\label{def-weaksol}
Let $\mathcal{K}_t \subset \mathbb{R}^{n+1}$, for $t \in (t_1,t_2)$, be a one-parameter family of bounded convex bodies\footnote{Here, a convex body refers a closed convex set which has non-empty interior.}. We say $\Sigma_t =\p \mathcal{K}_t \subset \mathbb{R}^{n+1}, t \in (t_1,t_2)$,  is a {\em viscosity subsolution} to the $\alpha$-GCF   if  the following holds: for every $[t_1',t_2']\subset (t_1,t_2)$, if  $\Sigma'_t=\p \mathcal{K}_t' $ is a smooth strictly convex solution to the $\alpha$-GCF with with $\mathcal{K}_{t_1'}' \subset \mathcal{K}_{t_1'}$, then the comparison $\mathcal{K}'_t \subset \mathcal{K}_t$ holds  for all $t\in[ t'_1,t_2']$. Similarly we define {\em viscosity supersolution} using the other inclusion. $\Sigma_t$ is a {\em viscosity solution}  if it is both a viscosity subsolution and a viscosity supersolution. 
\end{definition}

\bigskip

We have the existence and uniqueness of a viscosity solution running from the boundary of a bounded convex body. The proof is already given in the literature by taking a limit of approximaing smooth strictly convex solutions. We omit this proof here. 

\begin{proposition}[existence of unique solution, Theorem 15 \cite{andrews2000motion} c.f. Theorem 2.7 \cite{choi2022convergence}]\label{prop:viscosity_uniqueness} For a given closed convex surface 
 $\Sigma_0\subset \mathbb{R}^3$ which bounds a convex body, there exists a viscosity solution $\Sigma_t$ to the $\alpha$-Gauss curvature flow with $\alpha>0$. The solution exists for $t\in(0,T)$ some $T\in(0,\infty)$ and $\Sigma_t$ converges to $\Sigma_0$ in the Hausdorff distance as $t\to 0$. Moreover, any such solution is unique. 
\end{proposition}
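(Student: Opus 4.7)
The plan is to build the solution as a Hausdorff limit of smooth strictly convex $\alpha$-Gauss curvature flows and to use the comparison property built into Definition \ref{def-weaksol} to get uniqueness. I will split the argument into existence, initial attainment, and uniqueness.

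\textbf{Existence.} First I would approximate $\Sigma_0$ from inside and outside by a decreasing (resp. increasing) sequence of smooth strictly convex hypersurfaces $\Sigma_0^{\pm,\varepsilon}$ converging to $\Sigma_0$ in Hausdorff distance; such approximations are obtained e.g.\ by convolving the support function with a standard mollifier and adding a small ball (to ensure strict convexity). By the classical short-time existence and regularity theory for smooth strictly convex $\alpha$-GCF (Tso/Chou, Andrews), each approximate initial datum generates a unique smooth strictly convex solution $\Sigma_t^{\pm,\varepsilon}$ on a maximal interval $(0,T_\varepsilon^{\pm})$. Enclosing $\Sigma_0^{\pm,\varepsilon}$ in a fixed large ball and using the comparison principle for smooth flows gives a uniform lower bound $T_\varepsilon^{\pm}\geq T_0>0$. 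For fixed $t\in(0,T_0)$, the maximum principle gives that the associated bounded convex bodies $\mathcal K_t^{-,\varepsilon}$ are increasing in $\varepsilon^{-1}$ (inner approximations) and $\mathcal K_t^{+,\varepsilon}$ decreasing, so they converge in Hausdorff distance to convex bodies $\mathcal K_t^{\pm}$. Monotonicity also shows $\mathcal K_t^{-}\subset \mathcal K_t^{+}$; since the two sandwich any genuine solution, I would define $\mathcal K_t:=\mathcal K_t^{-}=\mathcal K_t^{+}$ after checking equality using that the difference of volumes goes to $0$ as $\varepsilon\to 0$ at $t=0$ and the Hausdorff-continuity of the flow.

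\textbf{Initial condition and viscosity property.} The Hausdorff convergence $\Sigma_t\to\Sigma_0$ as $t\to 0$ is inherited from the approximations together with the uniform Hausdorff closeness $\Sigma_0^{\pm,\varepsilon}\to\Sigma_0$: given $\eta>0$ one chooses $\varepsilon$ small so that $\Sigma_0^{\pm,\varepsilon}$ are within $\eta/2$ of $\Sigma_0$, and then inner/outer ball barriers (whose $\alpha$-GCF speed is universally bounded on the time interval $(0,T_0)$) give that $\Sigma_t^{\pm,\varepsilon}$ stay within $\eta$ of $\Sigma_0^{\pm,\varepsilon}$ for small enough $t$. To verify that the limit $\Sigma_t$ is a viscosity subsolution in the sense of Definition \ref{def-weaksol}, let $\Sigma_t'=\partial \mathcal K_t'$ be a smooth strictly convex $\alpha$-GCF on $[t_1',t_2']$ with $\mathcal K_{t_1'}'\subset \mathcal K_{t_1'}$. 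Because $\mathcal K_{t_1'}^{+,\varepsilon}\supset \mathcal K_{t_1'}$ and these are smooth strictly convex, the standard comparison principle for smooth flows yields $\mathcal K_t'\subset \mathcal K_t^{+,\varepsilon}$ on $[t_1',t_2']$, and passing $\varepsilon\to 0$ gives $\mathcal K_t'\subset\mathcal K_t$. The supersolution property is symmetric. A maximal existence time $T<\infty$ is forced because an enclosing ball shrinks to a point in finite time, so $\mathcal K_t$ must become empty.

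\textbf{Uniqueness.} Suppose $\Sigma_t^{(1)},\Sigma_t^{(2)}$ are two viscosity solutions with the same initial datum $\Sigma_0$. I would apply the subsolution property of $\Sigma_t^{(1)}$ against the smooth outer approximation flows $\Sigma_t^{+,\varepsilon}$ built above: for every small $\delta>0$, the smooth flow starting at $\Sigma_\delta^{+,\varepsilon}$ with $\mathcal K_\delta^{+,\varepsilon}\supset \mathcal K_\delta^{(1)}$ (which follows from Hausdorff convergence at $t=\delta$ for $\varepsilon$ small) dominates $\mathcal K_t^{(1)}$ on $[\delta,T_0)$ by Definition \ref{def-weaksol}. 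Applying the supersolution property of $\Sigma_t^{(2)}$ to the same smooth barrier from below and letting first $\varepsilon\to 0$ and then $\delta\to 0$ using the Hausdorff attainment of the initial data, I obtain $\mathcal K_t^{(1)}\subset \mathcal K_t^{(2)}$; the reverse inclusion is identical.

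\textbf{Main obstacle.} The existence pieces are essentially monotone passage to the limit, so the only subtle step is producing, for the uniqueness argument, smooth strictly convex comparison flows that start arbitrarily close to a non-smooth viscosity solution at the positive time $\delta$. This requires the intermediate-time regularization that a convex body has a smooth strictly convex approximation converging in Hausdorff distance together with a quantitative speed-of-evolution estimate so that the comparison can be iterated to $\delta\to 0$. This is where the Hausdorff continuity of the initial data attainment, proved via universal ball barriers, does the crucial work and is the only point where one must be careful beyond bookkeeping.
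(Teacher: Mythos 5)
The paper deliberately omits the proof of this proposition, deferring to Andrews (Theorem 15 of \cite{andrews2000motion}) and to \cite{choi2022convergence}; it only indicates that the argument goes ``by taking a limit of approximating smooth strictly convex solutions.'' Your proposal follows exactly that standard scheme, and the scaffolding (inner/outer smooth strictly convex approximations, short-time existence for the smooth problem, comparison principle, barriers for the initial attainment) is all appropriate. However, there is a genuine gap at the step that actually carries the uniqueness, and a secondary confusion in how the sub/super-solution properties are invoked.

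The load-bearing step is showing that the inner and outer limits $\mathcal K_t^- $ and $\mathcal K_t^+$ coincide, since once that holds both uniqueness and the initial attainment follow by squeezing. You justify it with ``the difference of volumes goes to $0$ as $\varepsilon\to 0$ at $t=0$ and the Hausdorff-continuity of the flow.'' This reasoning works only for $\alpha=1$, where $\frac{d}{dt}\vol(\mathcal K_t)=-\int_{S^2}K^{\alpha-1}\,d\omega=-4\pi$ is a universal constant and nested bodies therefore keep a constant volume gap. For general $\alpha>0$ (which the proposition asserts) the rate $-\int_{S^2}K^{\alpha-1}\,d\omega$ depends on the geometry, the volume gap need not be monotone, and the argument collapses. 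The correct mechanism is the parabolic scaling symmetry of the equation, exactly as the paper uses it in the proof of Lemma \ref{lemma-extra}: if $\mathcal K_0^{+,\varepsilon}\subset(1+\eta)\mathcal K_0^{-,\varepsilon}$, then by dilation invariance $(1+\eta)\Sigma^{-,\varepsilon}_{(1+\eta)^{-(1+2\alpha)}t}$ is again a smooth solution, and the comparison principle yields $\mathcal K_t^{+,\varepsilon}\subset(1+\eta)\mathcal K^{-,\varepsilon}_{(1+\eta)^{-(1+2\alpha)}t}$; sending $\varepsilon\to 0$ and then $\eta\to 0$, together with Hausdorff continuity in time, gives $\mathcal K_t^+=\mathcal K_t^-$. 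Your uniqueness paragraph also swaps the roles of the sub- and super-solution properties of Definition \ref{def-weaksol}: an outer smooth barrier $\Sigma^{+,\varepsilon}_t$ controls a viscosity solution from above via the \emph{supersolution} property, not the subsolution property, and the mirror-image control from below comes from an \emph{inner} barrier $\Sigma^{-,\varepsilon}_t$ via the subsolution property, so that one is squeezed between the two families and then again needs the rescaling argument to close the gap between them. Once the rescaling step is substituted, your overall strategy does yield the proposition.
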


\bigskip

\begin{remark}
If $\Sigma_t$, for $t\in(0,T)$, is a closed convex solution to the $\alpha$-GCF (in the viscosity sense), then the continuity of $\Sigma_t$ in the Hausdorff distance follows by the Harnack inequality. Namely, $\Sigma_t$ can not jump in time. This follows from the fact that $\Sigma_t$ can be approximated by smooth strictly convex solutions and they satisfy uniform bounds on changes of support function as shown in \cite[Theorem 5]{andrews2000motion}.
\end{remark}

\bigskip

As we construct ancient solutions through limits of almost ancient solutions, a suitable compactness theorem of the followoing form will be needed.  

\begin{lemma}[compactness]\label{lemma-extra}
Let $\Sigma^i_t$, for $t\in(0,T)$, be a viscosity solution to the $\alpha$-GCF with convex closed initial surfaces $\Sigma^i_0$. If $\Sigma^i_0$ converges to $\Sigma_0$ in the Hausdorff distance, then the $\alpha$-GCF (in the viscosity sense) running from $\Sigma_0$ is defined at least for $t\in(0,T)$ and $\Sigma^i_t$ converges to $\Sigma_t$ for all $t\in(0,T)$ in the Hausdorff distance. 
\end{lemma}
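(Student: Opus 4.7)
The plan is a standard viscosity-limit argument: combine Hausdorff-compactness of convex bodies with propagation of the test-surface comparison through the limit. First I would use the Hausdorff convergence $\Sigma^i_0\to\Sigma_0$ to confine all $\mathcal{K}^i_0$ to a common ball $B_R$, and via comparison with the shrinking sphere of initial radius $R$ obtain $\mathcal{K}^i_t\subset B_R$ throughout $(0,T)$. Encoding each $\mathcal{K}^i_t$ by its support function $h^i(\theta,t)=\sup_{x\in\mathcal{K}^i_t}\langle x,\theta\rangle$, the spatial Lipschitz constant in $\theta$ is bounded by $R$, while the Harnack-type uniform temporal bound on $h^i$ guaranteed by \cite[Theorem 5]{andrews2000motion} and recalled in the Remark above yields equicontinuity on compact subsets of $\mathbb{S}^n\times(0,T)$. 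Arzel\`a--Ascoli with a diagonal argument then produces a subsequence with $h^i\to h^\infty$ locally uniformly; setting $\mathcal{K}^\infty_t:=\{x:\langle x,\theta\rangle\le h^\infty(\theta,t)\ \text{for all }\theta\in\mathbb{S}^n\}$ and $\Sigma^\infty_t:=\partial\mathcal{K}^\infty_t$, comparison with a shrinking ball inscribed in $\mathcal{K}_0$ keeps $\mathcal{K}^\infty_t$ a genuine convex body on $(0,T)$.

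Next I would verify that $\Sigma^\infty_t$ is a viscosity solution. The main point is the subsolution property: given $[t_1',t_2']\subset(0,T)$ and a smooth strictly convex solution $\Sigma_t'$ with $\mathcal{K}'_{t_1'}\subset\mathcal{K}^\infty_{t_1'}$, I want $\mathcal{K}'_t\subset\mathcal{K}^\infty_t$ on the interval. Fix an interior point $p\in\mathrm{int}(\mathcal{K}'_{t_1'})$. The parabolic scaling symmetry of the $\alpha$-GCF shows that for small $\eta>0$ the family
\[
\Sigma^{\eta}_t\;:=\;p+(1-\eta)\!\left(\Sigma'_{t_1'+(1-\eta)^{-(1+n\alpha)}(t-t_1')}-p\right)
\]
is again a smooth strictly convex $\alpha$-GCF, defined on $[t_1',t_2']$ for $\eta$ small by first extending $\Sigma'$ slightly past $t_2'$. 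Its initial body $\mathcal{K}^{\eta}_{t_1'}$ lies at positive distance from $\partial\mathcal{K}'_{t_1'}$ and hence from $\partial\mathcal{K}^\infty_{t_1'}$, so by Hausdorff convergence $\mathcal{K}^i_{t_1'}\to\mathcal{K}^\infty_{t_1'}$ one has $\mathcal{K}^{\eta}_{t_1'}\subset\mathcal{K}^i_{t_1'}$ for all large $i$. The viscosity subsolution property of $\Sigma^i$ propagates this to $\mathcal{K}^{\eta}_t\subset\mathcal{K}^i_t$ on the whole interval, and sending first $i\to\infty$ and then $\eta\to0$ yields $\mathcal{K}'_t\subset\mathcal{K}^\infty_t$. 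The supersolution inequality is the mirror argument using outward dilation about an interior point of $\mathcal{K}^\infty_{t_1'}$.

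Finally, the uniform temporal regularity of $h^i$ together with the uniform convergence $h^i(\cdot,0)\to h(\cdot,0)$ gives $\mathcal{K}^\infty_t\to\mathcal{K}_0$ in Hausdorff distance as $t\to 0^+$, so $\Sigma^\infty_t$ is a viscosity solution from $\Sigma_0$ defined on $(0,T)$. Proposition \ref{prop:viscosity_uniqueness} then identifies $\Sigma^\infty_t$ with the unique such solution, whose maximal existence interval therefore contains $(0,T)$; since every subsequence of $\{\Sigma^i_t\}$ admits a further subsequence converging to this same limit, the full sequence converges in Hausdorff distance. I expect the main obstacle to be the perturbation argument above: ensuring the time-rescaled $\Sigma^{\eta}_t$ stays a smooth strictly convex $\alpha$-GCF on the full $[t_1',t_2']$ as $\eta\to0$, which hinges on the short forward extendability of $\Sigma'$ past $t_2'$ and the precise bookkeeping of the time reparametrization $\tau(t)=t_1'+(1-\eta)^{-(1+n\alpha)}(t-t_1')$. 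A secondary, essentially bookkeeping, issue is the temporal regularity of $h^\infty$ down to $t=0$ needed to match the initial data, handled by the uniform support-function estimates of \cite[Theorem 5]{andrews2000motion}.
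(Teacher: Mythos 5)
Your proof is correct in spirit but takes a genuinely different route than the paper. The paper works with a monotone \emph{inner} approximation $\bar\Sigma^j_0\nearrow\Sigma_0$ by smooth strictly convex surfaces (the same approximating sequence that defines the viscosity solution $\Sigma_t$ in Proposition \ref{prop:viscosity_uniqueness}) and sandwiches: by comparison, $\bar\Sigma^j_t\subset\Sigma^i_t\subset(1+\varepsilon)\bar\Sigma^j_{(1+\varepsilon)^{-(1+2\alpha)}t}$ for $i$ large, and then sends $j\to\infty$ and $\varepsilon\to0$. Since both the inner and outer barriers converge to $\Sigma_t$, convergence of the full sequence $\Sigma^i_t\to\Sigma_t$ drops out immediately, without ever extracting a subsequence or re-verifying that a limit is a viscosity solution; the same sandwich also controls the existence times $T_j$. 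Your route instead assembles a subsequential Hausdorff limit via support-function equicontinuity and Arzel\`a--Ascoli, verifies the viscosity sub/supersolution property for the limit directly via the parabolic dilation trick, matches the initial data, and then invokes uniqueness (Proposition \ref{prop:viscosity_uniqueness}) plus the standard every-subsequence argument to upgrade to full-sequence convergence. Both proofs rest on the same scaling symmetry of the $\alpha$-GCF; the paper uses it to build outer barriers adapted to a canonical inner approximation, while you use it as a perturbation device inside the viscosity test. The paper's argument is leaner because it piggybacks on the approximation already built into the existence proof; yours is a bit more machinery but is the more robust ``compactness $+$ uniqueness'' template.

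One step in your write-up deserves a cleaner justification: the claim that the initial condition $\mathcal{K}^\infty_t\to\mathcal{K}_0$ as $t\to0^+$ follows from the uniform temporal regularity supplied by \cite[Theorem~5]{andrews2000motion}. Harnack-type support-function estimates of that sort are estimates for positive times and generally degenerate as $t\to0^+$, so they are not the natural tool here. The containment $\limsup_{t\to0^+}\mathcal{K}^\infty_t\subset\mathcal{K}_0$ is automatic from monotonicity of convex flows and $\mathcal{K}^i_0\to\mathcal{K}_0$. For the reverse containment, use an inner shrinking-sphere barrier: for any point $q\in\mathrm{int}(\mathcal{K}_0)$ pick a small ball $B(q,r)\subset\mathrm{int}(\mathcal{K}_0)$; then $B(q,r)\subset\mathcal{K}^i_0$ for $i$ large, the shrinking sphere issuing from $B(q,r)$ lies inside $\mathcal{K}^i_t$ by comparison, and passing $i\to\infty$ then $t\to0^+$ gives $q\in\liminf_{t\to0^+}\mathcal{K}^\infty_t$. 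This yields $\mathcal{K}_0\subset\lim_{t\to0^+}\mathcal{K}^\infty_t$ uniformly without any Harnack input. With that substitution, and the small forward extension of the test solution that you already flag for the dilation trick, the argument is complete.
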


\begin{proof}The proof follows by the comparison principle and an approximation argument. Assume the origin is contained inside of $\Sigma_0$. Consider a family of smooth strictly convex surfaces $\bar \Sigma^i_0$, $i=1,2,\ldots$, which monotone increases to $\Sigma_0 $. The smooth unique flow running flow $\bar \Sigma^i_0$, namely $\bar \Sigma^i_t$, exists for $t\in [0,T_i)$ and note that the flow $\Sigma_t$ exists for the time upto $\lim_{i\to \infty}T_i$. For each fixed $j$ and $\varepsilon >0$, by the rescaling, observe $(1+\varepsilon)\bar M^j_{(1+\varepsilon)^{-(1+2\alpha)} t} $ is a flow exists unto $t= (1+\varepsilon)^{1+2\alpha}T_j$. Since for each fixed $j$ and $\varepsilon$, $\Sigma^i_0$ is included in $(1+\varepsilon)M^j_{0}$ for large $i$, we conclude, by the comparison principle, $(1+\varepsilon)^{1+2\alpha} T_j \ge T $. By taking $j\to \infty $ and $\epsilon \to 0$, we conclude $\lim_{j\to \infty} T_j \ge T$. This proves the first assertion. 

Next, by the same comparison principle, for each $j\in\mathbb{N}$, $\varepsilon$ and $t\in (0,T)$,    $\bar \Sigma ^j_t$ is contained in $\Sigma^i_t$ and $\Sigma^i_t$ is contained in $(1+\varepsilon)\Sigma^j_{(1+\varepsilon)^{-(1+2\alpha)}t}$ for sufficiently large $i$. Since $\bar \Sigma^j_t\to \Sigma_t$ as $j\to \infty$, the sandwich argument leads to the conclusion that $\Sigma^i_t$ converges to $\Sigma_t$ for each $t\in(0,T)$.
\end{proof}

  \bigskip
 
\begin{definition}[displacement]\label{displacements}
We denote the horizontal and vertical displacements of a $Z_2^2$-symmetric curve $\Gamma \in \mathbb{R}^2$ by $h$ and $l$, respectively. Namely,
\begin{align}
&  h := \sup \{\mathbf{x}^1:\mathbf{x}\in \Gamma\}, && l :=\sup \{ \mathbf{x}^2:\mathbf{x} \in \Gamma\}.
\end{align}
Also, given a $Z_2\times O(2)$ or $O(2)\times Z_2$-symmetric surface $\Sigma\subset \mathbb{R}^3$, we denote
\begin{align}
&  h := \sup \{\mathbf{x}^1:\mathbf{x}\in \Sigma\}, && l :=\sup \{ \mathbf{x}^3:\mathbf{x} \in \Sigma\}.
\end{align}
\end{definition}

 \bigskip

\begin{proposition}[c.f. {\cite[Lemma 4.4]{bourni2021collapsing}}] \label{prop-touching}
Let $\Gamma\subset\mathbb{R}^2$ be a smooth convex $Z_2^2$-symmetric closed curve whose displacements in Definition \ref{displacements} satisfy $l \geq h$. Then, given $z\in [h,l]$ there exists $\mathbf{x}_z\in \Gamma$ such that $|\langle \mathbf{x}_z,e_2\rangle| \leq z$ and the curvature $\kappa>0$ of $\Gamma$ at $\mathbf{x}_z$ is bounded by
\begin{equation}
    \kappa(\mathbf{x}_z) \leq \frac{2h}{z^2+h^2}.
\end{equation}
\end{proposition}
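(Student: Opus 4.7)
The plan is to parametrize $\Gamma$ by its outward-normal angle $\theta$ and to extract the curvature bound from a sharp one-sided integral inequality on the radius of curvature $g(\theta):=1/\kappa(\theta)$. By the $Z_2^2$-symmetry, the rightmost point $(h,0)$ corresponds to $\theta=0$ and the topmost point $(0,l)$ to $\theta=\pi/2$. The relation $d\mathbf{x}/d\theta = g(\theta)(-\sin\theta,\cos\theta)$ yields
\[
X(\theta)=h-\int_0^\theta g(\theta')\sin\theta'\,d\theta',\qquad Y(\theta)=\int_0^\theta g(\theta')\cos\theta'\,d\theta',
\]
so $X\geq 0$ and $Y$ is nondecreasing on $[0,\pi/2]$, with $X(\pi/2)=0$, $Y(\pi/2)=l$, and $\int_0^{\pi/2} g\sin\,d\theta=h$. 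Since $0\leq z\leq l$, the intermediate value theorem produces $\theta^+\in[0,\pi/2]$ with $Y(\theta^+)=z$.

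The argument then proceeds by contradiction. Set $R:=(h^2+z^2)/(2h)$, so $2h/(z^2+h^2)=1/R$, and suppose $g(\theta)<R$ for every $\theta\in[0,\theta^+]$. I will show
\[
\int_0^{\theta^+} g(\theta)\sin\theta\,d\theta > h,
\]
which contradicts $\int_0^{\theta^+} g\sin\,d\theta = h - X(\theta^+)\leq h$. After the substitution $u=\sin\theta$ on $[0,\theta^+]$, writing $\tilde g(u):=g(\arcsin u)$ and $u^+=\sin\theta^+$, the mass constraint becomes $\int_0^{u^+}\tilde g\,du=z$, while the target integral reads $\int_0^{u^+}\tilde g(u)\,\frac{u}{\sqrt{1-u^2}}\,du$.

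Since the weight $u/\sqrt{1-u^2}$ is strictly increasing on $[0,1)$, a standard bang-bang/rearrangement argument shows that the linear functional $\tilde g\mapsto \int\tilde g\cdot u/\sqrt{1-u^2}\,du$, minimized over measurable $\tilde g$ with $\int\tilde g\,du=z$ and $0\leq\tilde g\leq R$, attains its minimum uniquely at the extremal $\tilde g^\star=R\chi_{[0,z/R]}$, with value
\[
R\Bigl(1-\sqrt{1-(z/R)^2}\Bigr)=\frac{h^2+z^2}{2h}\cdot\frac{2h^2}{h^2+z^2}=h,
\]
where the last identity uses $z\geq h$ to evaluate $\sqrt{R^2-z^2}=(z^2-h^2)/(2h)$. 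Concretely, one compares $\tilde g$ with $\tilde g^\star$ via the pointwise inequality $(\tilde g-\tilde g^\star)(\phi-\phi(z/R))\geq 0$ with $\phi(u)=u/\sqrt{1-u^2}$, and integrates; strict positivity of both factors on $[0,z/R)$, forced by the strict assumption $\tilde g<R$, delivers the strict inequality.

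The main technical point is precisely this rearrangement/bang-bang step, which is the quantitative form of the touching-circle heuristic: the extremal $\tilde g^\star$ corresponds exactly to the circle of radius $R$ through $(h,0)$ and $(0,\pm z)$, whose curvature $1/R$ realizes the stated bound. Once the inequality is in place, the contradiction produces $\theta_z\in[0,\theta^+]$ with $g(\theta_z)\geq R$, and $\mathbf{x}_z:=\mathbf{x}(\theta_z)$ satisfies $\kappa(\mathbf{x}_z)\leq 2h/(h^2+z^2)$ together with $|\langle\mathbf{x}_z,e_2\rangle|=Y(\theta_z)\leq Y(\theta^+)=z$, completing the proof.
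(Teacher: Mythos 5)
Your argument is correct, but it takes a genuinely different route from the paper's. The paper constructs a reference circle of radius $\rho=(h^2+z^2)/(2h)$ through $(h,0)$ and $(0,\pm z)$, and either notes $\kappa(he_1)\le 1/\rho$ directly or translates the arc in the $-e_1$ direction until it touches $\Gamma$; the tangency point realizes the bound. You instead parametrize $\Gamma$ by outward normal angle $\theta$, encode the displacement constraints as integrals of the radius of curvature $g=1/\kappa$ against the weights $\sin\theta$ and $\cos\theta$, and reduce the curvature bound to a bang-bang (monotone rearrangement) inequality for the linear functional $\tilde g\mapsto\int\tilde g\,\phi\,du$ with $\phi(u)=u/\sqrt{1-u^2}$ increasing, under the mass constraint $\int\tilde g\,du=z$ and a uniform bound $\tilde g\le R$. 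The extremal $\tilde g^\star=R\chi_{[0,z/R]}$ is precisely the circular arc of the paper's construction, so the two proofs are two faces of the same touching-circle picture, as you observe. All the computational steps check out: $z/R\le u^+$ follows from the strict hypothesis, $z\ge h$ enters in evaluating $\sqrt{R^2-z^2}=(z^2-h^2)/(2h)$, and the strictness of the pointwise inequality $(\tilde g-\tilde g^\star)(\phi-\phi(z/R))>0$ on $[0,z/R)$ delivers the contradiction with $\int_0^{\theta^+}g\sin\theta\,d\theta=h-X(\theta^+)\le h$. Your approach is more analytic and avoids the case split and the tangency/translation argument of the paper at the cost of being somewhat longer; the paper's is shorter and more visually immediate. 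Both are sound.
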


\begin{proof}
We can obtain the desired result by trivial modification of the proof of \cite[Lemma 4.4]{bourni2021collapsing}. However, we briefly explain it for readers' convenience. 

We consider a circle centered at $-(\rho-h)e_1$ with radius $\rho:=\frac{z^2+h^2}{2h}$. Then, the points $\pm ze_2$ and $he_1$ belong to the circle. We denote by $\mathcal{C}$ the arc that $he_1\in \mathcal{C}$ and $\partial \mathcal{C}=\{\pm ze_2\}$. 

If $\kappa(he_1) \leq 1/\rho$, then we can complete the proof by choosing $\mathbf{x}_z=he_1$. Otherwise, $\mathcal{C}$ intersects with $\Gamma$ at some points. Hence, we shift $\mathcal{C}$ down to make it tangent to $\Gamma$. Namely, there is $a\in (0,h)$ such that $\mathcal{C}-ae_1$ is tangent to $\Gamma$. Then, by choosing one of the tangent points as $\mathbf{x}_z$, we finish the proof.
\end{proof}

\bigskip

We recall the \textit{paperclip} $\overline{\Gamma}_t=\bar \gamma(S^1,t)$ that converges to $\{x_1=\pm \frac{1}{2}\pi\}$ as $t\to -\infty$ and shrinks to the origin as $t\to 0$. Then, its position vector $\bar \gamma=(\bar\gamma^1,\bar\gamma^2)$ satisfies (e.g. equation (3) of \cite{bourni2021collapsing})
\begin{equation}\label{paperclip identity}
\cos \bar\gamma^1 = e^t \cosh{\bar\gamma^2}.
\end{equation}
\begin{figure}[htbp]
    \centering
    \includegraphics[width=0.8\linewidth]{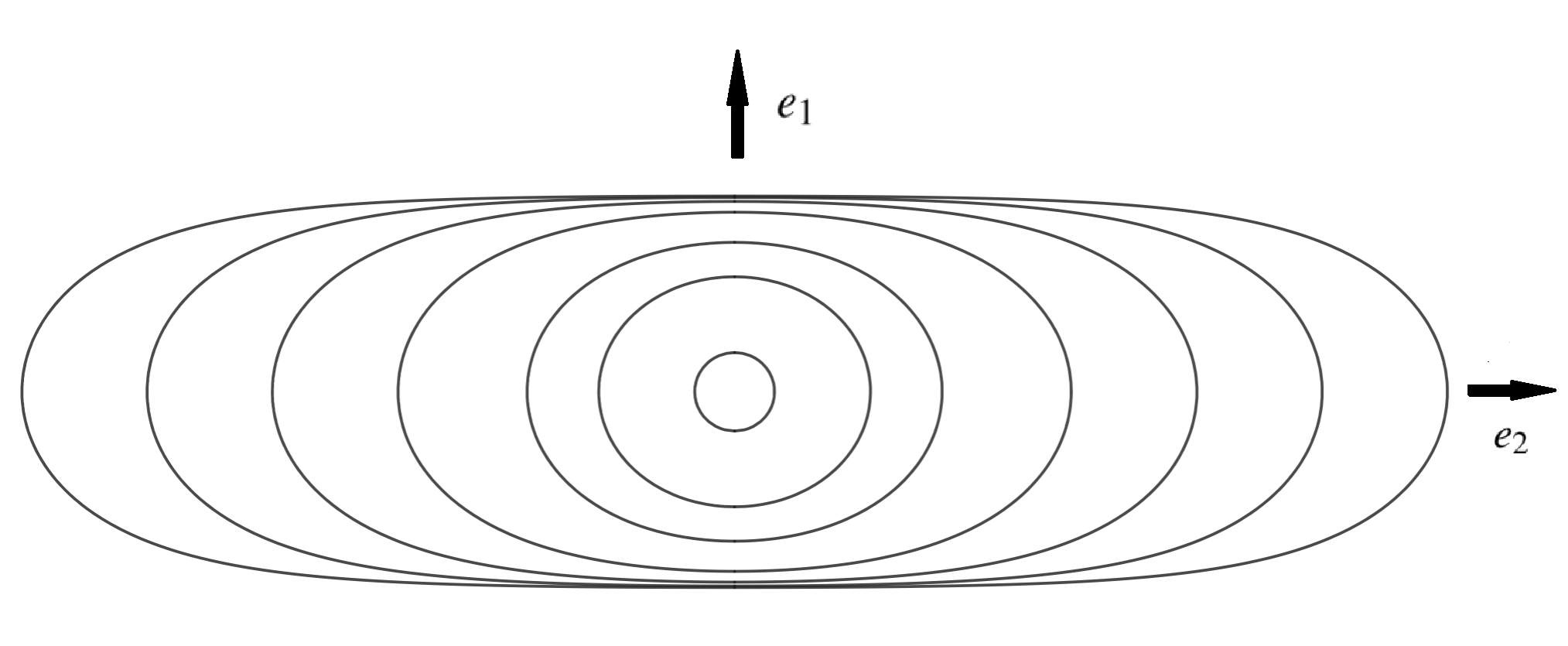}
    \caption{Time slices of paperclip}
    \label{fig:paperclip}
\end{figure}

To describe the geometry of convex body, it is useful to adopt the notion of support function.

\begin{definition}[Support function]
    For a convex body $M^n \subset \R^{n+1}$ with position vector $X$, the \textit{support function} $S: S^n \to \R$ is defined by
    \[
    S(\theta) := \inn{X(\theta)}{\nu(\theta)}
    \]
    where $\theta \in S^{n}$.
\end{definition}

Roughly speaking, the support function measures the distance between the origin and tangent plane to the point.

\bigskip

\section{Curvature ratio evolution}\label{sec 3}
 
In this section, we consider a strictly convex $Z_2\times O(2)$-symmetric solution $\Sigma_t$ to the Gauss curvature flow, and we will show that the Gauss curvature attains its minimum at the axis of rotation under certain initial assumption. To this end, we first define its \textit{profile function} $u(x,t)\geq 0$ by 
\begin{equation}\label{def:pancake_profile}
\Sigma_t = \{(x,u(x,t)\cos\theta,u(x,t)\sin\theta) : |x| \leq h(t), \theta \in S^1\},
\end{equation}
where $h(t)$ denotes the $e_1$-displacement in Definition \ref{displacements}.

To describe its geometry, we note that
\begin{align}\label{eq:profile_symmetry}
    & u(\pm h(t),t)=0, && u_{xx}<0, && u(x,t)=u(-x,t).
\end{align}
We express the principal curvatures and the evolution equation of $u$ as follows.
 \bigskip

\begin{proposition}\label{prop:profile_evol}
   The strictly convex Gauss curvature flow $\Sigma_t$ in \eqref{def:pancake_profile} has two positive principal curvatures $\lambda_1,\lambda_2>0$ which satisfy $\lambda_1=\lambda_2$ at $|x|=h(t)$ and
    \begin{align}\label{eq:pancake_curvature}
&    \lambda_1=-\frac{u_{xx}}{(1+|u_x|^2)^{3/2}}, && \lambda_2 = \frac{1}{u(1+|u_x|^2)^{1/2}},
    \end{align}
    on  $\{|x|<  h(t)\}$. Also, the profile $u(x,t)$ solves the following parabolic equation
    \begin{equation}\label{eq:profile_equation}
        u_t = \frac{u_{xx}}{u(1+|u_x|^2)^{3/2}}
        \end{equation}
        on  $\{|x|<  h(t)\}$.
\end{proposition}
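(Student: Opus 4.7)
The plan is a direct surface-of-revolution computation in the parametrization $X(x,\theta,t) = (x, u(x,t)\cos\theta, u(x,t)\sin\theta)$ on $|x|<h(t)$, patched at the tips $|x|=h(t)$ by a change to an $O(2)$-invariant graph chart. First I would compute the fundamental forms in the $(x,\theta)$ chart: the orthogonal frame $X_x, X_\theta$ yields the diagonal first form $g = \mathrm{diag}(1+u_x^{2}, u^{2})$ and outward unit normal $\nu = (-u_x,\cos\theta,\sin\theta)/\sqrt{1+u_x^{2}}$, with the sign chosen so that $\nu$ points away from the rotation axis. Reading off $X_{xx}, X_{x\theta}, X_{\theta\theta}$ and using the convention $h_{ij} = -\langle X_{ij}, \nu\rangle$ appropriate for positive principal curvatures of a convex surface, the second form is also diagonal, with $h_{xx} = -u_{xx}/\sqrt{1+u_x^{2}}$ and $h_{\theta\theta} = u/\sqrt{1+u_x^{2}}$. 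The diagonal ratios then produce \eqref{eq:pancake_curvature}, with positivity immediate from $u>0$ and the strict convexity assumption $u_{xx}<0$.

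Next I would derive the evolution equation by exploiting that the flow $X_t=-K\nu$ only fixes the normal component of the velocity, so any tangential discrepancy between the $(x,\theta)$ chart and a Lagrangian parametrization is harmless. At constant $(x,\theta)$ one has $X_t = (0, u_t\cos\theta, u_t\sin\theta)$, whose $\nu$-component equals $u_t/\sqrt{1+u_x^{2}}$. Equating this with $-K = -\lambda_1\lambda_2 = u_{xx}/[u(1+u_x^{2})^{2}]$ and multiplying by $\sqrt{1+u_x^{2}}$ gives \eqref{eq:profile_equation}.

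The main (mild) obstacle is the tip identity $\lambda_1=\lambda_2$ at $|x|=h(t)$, where \eqref{eq:pancake_curvature} becomes formally indeterminate since $u\to 0$ while $u_x\to\mp\infty$. I would resolve this by switching locally to the $O(2)$-invariant chart $x=\psi(r,t)$ with $r=\sqrt{y^{2}+z^{2}}$; strict convexity and smoothness at the tip force $\psi_r(0,t)=0$ and $a:=-\psi_{rr}(0,t)>0$. At the fixed point $r=0$ of the rotation, the shape operator must commute with the $O(2)$-action on the tangent plane and is therefore a scalar, giving $\lambda_1=\lambda_2=a$. As a sanity check, substituting the Taylor expansion $u(x,t) = \sqrt{2(h(t)-x)/a} + O(h(t)-x)$ into both formulas of \eqref{eq:pancake_curvature} reproduces the common limit $a$ as $x\to h(t)^-$, confirming the equality.
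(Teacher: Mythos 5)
Your proposal is correct and follows essentially the same route as the paper: compute the first and second fundamental forms in the $(x,\theta)$ chart to read off $\lambda_1,\lambda_2$, and equate $\langle X_t,\nu\rangle$ with $-K$ to get the profile equation. The paper dispatches the tip identity $\lambda_1=\lambda_2$ at $|x|=h(t)$ in one sentence ("thanks to the symmetry"); your shape-operator-commutes-with-$O(2)$ argument and Taylor-expansion check simply make that step explicit and rigorous.
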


 \begin{proof}
 Thanks to the symmetry, we have $\lambda_1=\lambda_2$ at $|x|=h(t)$.
 
 For $|x| <h(t)$, by using the parameters $(x,\theta)$ in \eqref{def:pancake_profile}, we denote the position vector $X(x,\theta)=(x,u\cos\theta,u\sin\theta)$. Then, we have two orthogonal tangent vectors 
\begin{align}
&    X_x=(1,u_x\cos\theta,u_x\sin\theta), && X_\theta=(0,-u\sin\theta,u\cos\theta).
\end{align}
Thus, the outward pointing unit normal $\nu$ is given by
\begin{equation}\label{eq:pancake_normal}
    \nu=\frac{(-u_x,\cos\theta,\sin\theta)}{(1+|u_x|^2)^{1/2}}.
\end{equation}
Therefore, we get the second fundamental form $h_{ij}=-\langle X_{ij},\nu\rangle$ that
\begin{align}
&    h_{xx}=-u_{xx}(1+|u_x|^2)^{-1/2}, && h_{x\theta}=0, && h_{\theta\theta}=u(1+|u_x|^2)^{-1/2}.
\end{align}
Observing $g^{xx}=(1+|u_x|^2)^{-1}$, $g^{x\theta}=0$, and $g^{\theta\theta}=u^{-2}$, we can obtain \eqref{eq:pancake_curvature}.

Next, using $X_t=(0,u_t\cos\theta,u_t\sin\theta)$ and \eqref{eq:pancake_normal}, we have $\langle X_t,\nu\rangle=u_t(1+|u_x|^2)^{-1/2}$. Since the Gauss curvature flow implies $\langle X_t,\nu\rangle=-K=-\lambda_1\lambda_2$, combining with \eqref{eq:pancake_curvature} yields \eqref{eq:profile_equation}.
\end{proof}

\bigskip

Now, we will derive the evolution equation of the curvature ratio 
\begin{equation}
R := \frac{\lambda_1}{\lambda_2} = -\frac{uu_{xx}}{1+|u_x|^2}.    
\end{equation}
For easy of notation, we define
\begin{equation}
    Q := (1+|u_x|^2)^{1/2}.
\end{equation}

\bigskip

\begin{lemma}\label{lem:ratio_evolution} The curvature ratio $R:=\lambda_1/\lambda_2$ of the flow $\Sigma_t$ in \eqref{def:pancake_profile} satisfies
\begin{equation}
    R_t = \frac{R_{xx}}{uQ^3} + \frac{u_x(5R-4)}{u^2Q^3}R_x + \frac{R(1-R)}{u^3Q^3}\left[R + 2|u_x|^2 (3-R)\right],
\end{equation}
for $|x| < h(t)$.
\end{lemma}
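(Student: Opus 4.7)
The strategy is a direct computation: I would compute $R_t$, $R_x$, and $R_{xx}$ in terms of $u$ and its $x$-derivatives, then eliminate higher $x$-derivatives of $u$ in favour of derivatives of $R$ (together with $u$, $u_x$, $R$), exploiting the identity $uu_{xx} = -RQ^2$ throughout. Keeping $R$ as the primary scalar invariant from the outset is what allows the big intermediate expressions to telescope.

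\emph{Spatial derivatives.} Since $\partial_x Q^{-2} = -2u_x u_{xx}/Q^4$ and $uu_{xx}/Q^2 = -R$, a direct differentiation of $R = -uu_{xx}/Q^2$ gives
\begin{equation*}
R_x = -\frac{u_x u_{xx}(1+2R)}{Q^2} - \frac{u u_{xxx}}{Q^2},
\end{equation*}
which I would solve for $u_{xxx}$ as an affine function of $R_x$ with coefficients polynomial in $u$, $u_x$, $R$. Differentiating once more and repeatedly applying $uu_{xx} = -RQ^2$ expresses $u_{xxxx}$ in terms of $R_{xx}$, $R_x$, $R$, $u$, $u_x$.

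\emph{Time derivatives and assembly.} Differentiating $R$ in $t$ yields
\begin{equation*}
R_t = -\frac{u_t u_{xx}}{Q^2} - \frac{u u_{xxt}}{Q^2} - \frac{2R u_x u_{xt}}{Q^2}.
\end{equation*}
The crucial simplification is
\begin{equation*}
(uQ^3)_x = u_x Q^3 + 3uQ\, u_x u_{xx} = u_x Q^3(1-3R),
\end{equation*}
which, applied to the profile equation $u_t = u_{xx}/(uQ^3)$, gives the compact form
\begin{equation*}
u_{xt} = \frac{u_{xxx}}{uQ^3} - \frac{u_x u_{xx}(1-3R)}{u^2 Q^3};
\end{equation*}
differentiating once more produces $u_{xxt}$ as a polynomial in $u$, $u_x$, $u_{xx}$, $u_{xxx}$, $u_{xxxx}$. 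Substituting these together with the inversions for $u_{xxx}$, $u_{xxxx}$ into the formula for $R_t$, replacing every remaining $u_{xx}$ by $-RQ^2/u$, and grouping terms by the number of $x$-derivatives of $R$ should produce exactly the three claimed coefficients $1/(uQ^3)$, $u_x(5R-4)/(u^2Q^3)$, and $R(1-R)[R+2|u_x|^2(3-R)]/(u^3Q^3)$.

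The main obstacle is the algebraic bookkeeping in the final grouping. The expansion of $u_{xxt}$ generates many cross terms such as $u_{xx}u_{xxx}$ and $u_{xx}^3$, and while each one reduces mechanically via $u_{xx} = -RQ^2/u$ and the formula for $u_{xxx}$ from the first step, seeing the derivative-free remainder factor cleanly as $R(1-R)$ times a polynomial in $R$ and $|u_x|^2$ requires disciplined accounting of the powers of $u$, $Q$, and $R$. Using the identity $(uQ^3)_x = u_x Q^3(1-3R)$ systematically, rather than expanding $Q^3$ into $(1+|u_x|^2)^{3/2}$ each time, is what keeps the computation tractable by hand and makes the $(5R-4)$ and $(3-R)$ combinations visible rather than emerging from miraculous cancellation.
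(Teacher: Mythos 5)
Your proposal is essentially the paper's argument: both differentiate $R$ in time and use the key algebraic identity $uu_{xx} = -RQ^2$ (equivalently $Q_x = -u_x Q R/u$) to close the computation in terms of $R$ and its $x$-derivatives. The paper streamlines the bookkeeping by first rewriting the profile equation as $u_t = -R/(u^2Q)$, so that $u_{xxt} = -(R/(u^2Q))_{xx}$ and $Q_t$ reduce to derivatives of $1/(u^2Q)$, for which $(1/(u^2Q))_x = (R-2)u_x/(u^3Q)$, rather than inverting $R_x$, $R_{xx}$ to solve for $u_{xxx}$, $u_{xxxx}$ as you propose; this avoids the step you flag as the main obstacle, but the substance of the computation is the same.
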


\begin{proof}
Since \eqref{eq:profile_equation} implies $R=-u^2u_tQ$, differentiating $R=-uu_{xx}Q^{-2}$ in time yields
    \begin{align}\label{eq:ratio_step1}
            R_t = -\frac{u_tu_{xx}+uu_{xxt}}{Q^2} + \frac{2u_{xx}u}{Q^3}Q_t =- \frac{u}{Q^2}u_{xxt} - \frac{2R}{Q}Q_t -\frac{R^2}{u^3Q}.
        \end{align}
Since $Q_t=u_xu_{xt}Q^{-1}$, by differentiating \eqref{eq:profile_equation} we get
\begin{equation}
    -Q_t =-\frac{u_x}{Q}\left(\frac{u_{xx}}{uQ^3}\right)_x = \frac{u_x}{Q}\left(\frac{R}{u^2Q}\right)_x = \frac{u_x}{u^2Q^2}R_x + \frac{u_xR}{Q}\left(\frac{1}{u^2Q}\right)_x.
\end{equation}
Similarly, using \eqref{eq:profile_equation} we have
\begin{equation}
    -u_{xxt} = \left(\frac{R}{u^2Q}\right)_{xx} = \frac{R_{xx}}{u^2Q} + 2R_x\left(\frac{1}{u^2Q}\right)_x + R\left(\frac{1}{u^2Q}\right)_{xx}.
\end{equation}
Combining these two equations with \eqref{eq:ratio_step1} yields
    \begin{align}
                R_t =& \,\frac{R_{xx}}{uQ^3} + R_x \left[\frac{2u}{Q^2}\left(\frac{1}{u^2Q}\right)_x + \frac{2u_xR}{u^2Q^3}\right] \\
            & +\frac{uR}{Q^2}\left(\frac{1}{u^2Q}\right)_{xx}  + \frac{2u_xR^2}{Q^2}\left(\frac{1}{u^2Q}\right)_x -\frac{R^2}{u^3Q} .
    \end{align}
To simplify, by using
\begin{equation}\label{eq:density_derivative}
    Q_x = \frac{u_xu_{xx}}{Q} = \frac{u_x}{Q}\left(-\frac{Q^2R}{u}\right) = -\frac{u_xQR}{u}.    
\end{equation}
we get
\begin{equation}
    \left(\frac{1}{u^2Q}\right)_x = - \frac{2u_x}{u^3Q} + \frac{u_xR}{u^3Q}=(R-2)\frac{u_x}{u^3Q}.
\end{equation}
Thus, remembering \eqref{eq:density_derivative}, we differentiate again so that we get
\begin{equation}
    \left(\frac{1}{u^2Q}\right)_{xx} =R_x\frac{u_x}{u^3Q}+(R-2)\left[\frac{u_{xx}}{u^3Q}-\frac{3|u_x|^2}{u^4Q}+\frac{|u_x|^2R}{u^4Q}\right].
\end{equation}
Since $u_{xx}=-u^{-1}(1+|u_x|^2)R$, we can obtain
\begin{equation}            
    \frac{uR}{Q^2}\left(\frac{1}{u^2Q}\right)_{xx} 
            = R_x \frac{u_xR}{u^2Q^3} + \frac{(R-2)R}{u^3Q^3}\left[-R-3|u_x|^2\right].
\end{equation}
Hence, we finally get
    \begin{equation}
        R_t  = \frac{R_{xx}}{uQ^3} + \frac{R_x}{u^2Q^3}I + \frac{R}{u^3Q^3}J,
    \end{equation}
    where
    \begin{equation}
        I:=2(R-2)u_x+ 2u_xR + u_xR=(5R-4)u_x,
    \end{equation}
    and
    \begin{equation}
        J:=- (R-2)(R+ 3|u_x|^2)+2(R-2)R|u_x|^2-Q^2R.
    \end{equation}
    By using $Q=1+|u_x|^2$, we can simplify $J$ as
    \begin{equation}
        J=-R^2+R+|u_x|^2(2R^2-8R+6)=R(1-R)+2|u_x|^2(R-3)(R-1).
    \end{equation}
    This completes the proof.
\end{proof}

\bigskip

Next, by applying the maximum principle for $R$, we establish the main result in this section that if the ratio $R$ attains its minimum on the axis $e_1$ at $t=0$, then $R$ attains its interior minimum for all $t>0$.

\begin{theorem}\label{thm:ratio_min}
Suppose that  the flow $\Sigma_t$ in \eqref{def:pancake_profile} has the initial surface $\Sigma_0$ satisfying $\lambda_1\geq \lambda_2$ in $\{|x|<h(0)\}$. Then, $\Sigma_t$ satisfies $\lambda_1 \geq \lambda_2$ at $(x,t)\in (-h(t),h(t))\times [0,T)$, where $T$ is the singular time.    
\end{theorem}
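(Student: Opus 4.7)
The strategy is the parabolic maximum principle applied to $R := \lambda_1/\lambda_2$ via its evolution equation from Lemma \ref{lem:ratio_evolution}. Two structural features drive the argument. First, the $Z_2 \times O(2)$-symmetry forces $\lambda_1 = \lambda_2$ at the axis poles (as already noted in Proposition \ref{prop:profile_evol}), giving the automatic Dirichlet-type boundary condition $R(\pm h(t), t) = 1$. Second, and crucially, the zero-order reaction term in Lemma \ref{lem:ratio_evolution} has the correct sign: factoring $R(1-R) = -(R-1)R$ rewrites it as $-(R-1) \cdot G(R)$, where
$$ G(R) \ :=\ \frac{R\bigl[R + 2|u_x|^2(3-R)\bigr]}{u^3 Q^3} \ >\ 0 \qquad \text{whenever } R \in (0, 3). $$

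With this factorization in hand, I would set $\psi := R - 1$ and rewrite the evolution equation of Lemma \ref{lem:ratio_evolution} as a linear-looking equation $\psi_t = \mathcal{L}\psi - G(R)\psi$, where $\mathcal{L}\psi = \frac{\psi_{xx}}{uQ^3} + \frac{u_x(5R-4)}{u^2Q^3}\psi_x$ is a (degenerate) second-order elliptic operator. The initial hypothesis reads $\psi(\cdot, 0) \geq 0$ and the symmetric boundary condition is $\psi = 0$ at $|x| = h(t)$. The main argument is then the following: fix any $T_1 \in (0, T)$, let $(x_0, t_0)$ be a point where $\psi$ attains its infimum on the compact set $\overline{\Omega}_{T_1} := \{(x,t):\ |x| \leq h(t),\ 0 \leq t \leq T_1\}$, and suppose for contradiction that $\psi(x_0, t_0) < 0$. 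Since $\psi \geq 0$ on the parabolic boundary of $\overline{\Omega}_{T_1}$, the point $(x_0, t_0)$ must lie in the parabolic interior, so in particular $|x_0| < h(t_0)$ with $u(x_0, t_0) > 0$ and all coefficients finite, and $t_0 > 0$. The standard minimum conditions $\psi_t \leq 0$, $\psi_x = 0$, $\psi_{xx} \geq 0$, combined with $\psi(x_0,t_0) < 0$ (so that $R(x_0, t_0) \in (0, 1) \subset (0, 3)$ and hence $G(R) > 0$), force $\mathcal{L}\psi \geq 0$ and $-G(R)\psi > 0$. Substitution into the PDE then gives $\psi_t(x_0, t_0) > 0$, the desired contradiction. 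Sending $T_1 \uparrow T$ completes the proof.

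The main technical nuisance is the singular behavior of $u^{-1}$ and $u^{-3}$ in the coefficients as $|x| \to h(t)$. This turns out to be benign here because the symmetric boundary value $\psi = 0$ already matches the bound we wish to preserve; any putative dip of $R$ below $1$ must therefore occur strictly in the interior, where $u$ is bounded below on compact subregions and the equation is uniformly parabolic. No boundary regularity of $R$ beyond continuity up to $|x| = h(t)$ is needed, and continuity is immediate from the assumed strict convexity and smoothness of $\Sigma_t$ throughout Section \ref{sec 3}. Should strict positivity of $\psi_t$ at a degenerate infimum point require the reaction term to be strictly bounded below away from zero (e.g. if one worries about inf being attained only in the limit), one may replace $\psi$ by $\psi + \eta t$ for small $\eta > 0$ and send $\eta \to 0$ at the end, but the clean factorization above makes this precaution unnecessary.
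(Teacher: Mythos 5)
Your proposal is correct and follows essentially the same route as the paper: both apply the parabolic maximum principle to $R=\lambda_1/\lambda_2$ using Lemma \ref{lem:ratio_evolution}, with the boundary condition $R=1$ at $|x|=h(t)$ and the favorable sign of the zero-order reaction term at a putative interior minimum with $R<1$. The only cosmetic difference is that you recast the argument via $\psi=R-1$ on a fixed compact time slab while the paper uses a first-time-of-violation formulation; these are equivalent.
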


\begin{proof}
 Towards a contradiction, we suppose that $R=\lambda_1/\lambda_2 \geq 1$ fails at some $t>0$. Then, since $R(x,t)=1$ at $|x|=h(t)$ for all $t\geq 0$, there exist some $\varepsilon \in (0,1) $, $t_0 \in (0,T)$, and $x_0 \in (-h(t_0),h(t_0))$ such that  for all $ (x,t)\in (-h(t),h(t))\times [0,t_0)$ we have
 \begin{equation}
R(x,t)>  R(x_0,t_0)=1-\varepsilon.
 \end{equation}
 Then, we know $R_t\leq 0$, $R_{xx}\geq 0$, and $R_x=0$ at $(x_0,t_0)$. This contradicts Lemma \ref{lem:ratio_evolution}, because at the minimum point $(x_0,t_0)$ we have
 \begin{equation}
0\geq  \frac{R(1-R)}{u^3Q^3}\left[R + 2|u_x|^2 (3-R)\right] =\frac{\varepsilon(1-\varepsilon)}{u^3Q^3}(1-\varepsilon+2|u_x|^2(2+\varepsilon))>0.
\end{equation}
This completes the proof.
\end{proof}
\bigskip

\begin{proposition}\label{paperclip_curvature_ratio}
  Let $\overline{\Sigma}_t\subset \mathbb{R}^3$ be the surface of revolution obtained by rotating the paperclip $\overline{\Gamma}_t$ in \eqref{paperclip identity} around the $e_1$-axis, namely
\begin{equation}
    \overline{\Sigma}_t:=\{\mathbf{x}\in \mathbb{R}^3: (\mathbf{x}^1,(|\mathbf{x}^1|^2+|\mathbf{x}^1|^2)^{\frac{1}{2}})\in \overline{\Gamma}_t \}.
\end{equation}
The principal curvatures $\lambda_1,\lambda_2$ of $\overline{\Sigma}_t$ satisfy $\lambda_1\geq \lambda_2$ for all $t<0$.
\end{proposition}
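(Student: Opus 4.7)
The plan is to realize $\overline{\Sigma}_t$ as a graph of revolution and to compute the curvature ratio $R=\lambda_1/\lambda_2$ in closed form using the paperclip identity \eqref{paperclip identity}, and then verify $R\geq 1$ by an elementary inequality. More precisely, the upper half of $\overline{\Gamma}_t$ is the graph of
\[
u(x,t)=\cosh^{-1}\!\bigl(e^{-t}\cos x\bigr), \qquad |x|<h(t):=\arccos(e^t),
\]
so $\overline{\Sigma}_t$ fits the framework of \eqref{def:pancake_profile}. By Proposition~\ref{prop:profile_evol} the claim $\lambda_1\geq\lambda_2$ is equivalent to
\[
R(x,t) := -\frac{u\,u_{xx}}{1+u_x^2} \geq 1
\]
on $|x|<h(t)$, and the boundary case $|x|=h(t)$ already follows from the rotational symmetry at the axis.

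The main step is to compute $R$ explicitly. First I would implicitly differentiate $\cos x = e^t \cosh u$ once to get $\sinh u\cdot u_x = -e^{-t}\sin x$, and then once more to get $\sinh u\cdot u_{xx} = -\cosh u\,(1+u_x^2)$. Squaring the first relation and using $\sinh^2 u = \cosh^2 u - 1 = e^{-2t}\cos^2 x - 1$, one finds the clean identity
\[
1+u_x^2 = \frac{e^{-2t}-1}{\sinh^2 u}.
\]
Substituting the second-derivative relation into the definition of $R$ then cancels the factor $1+u_x^2$ and leaves the remarkably simple expression
\[
R(x,t) = u\,\coth u.
\]

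Finally I would establish $u\coth u \geq 1$ for $u\geq 0$. Setting $g(u):= u\coth u - 1$, a direct computation gives $g'(u) = \bigl(\sinh(2u)-2u\bigr)/(2\sinh^2 u)\geq 0$ since $\sinh s \geq s$ for $s\geq 0$; combined with the limit $g(0^+)=0$ (obtained from the power series $u\coth u = 1+u^2/3+\cdots$), this yields $R\geq 1$ throughout the open interval $|x|<h(t)$, with equality only in the limit $u\to 0$.

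I do not expect a serious obstacle: the paperclip identity forces every quantity to reduce to an elementary function of $u$, and the cascade of simplifications is driven purely by the relation $\cos x = e^t\cosh u$. The one point that warrants care is the behavior at the axis of rotation, where $u\to 0$ and both $\lambda_1,\lambda_2$ individually blow up, but the ratio $R$ extends continuously to the value $1$ via the above power series, matching the symmetry constraint $\lambda_1=\lambda_2$ at $|x|=h(t)$.
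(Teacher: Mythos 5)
Your proof is correct and follows essentially the same route as the paper's: differentiate the paperclip identity $\cos x = e^t\cosh u$ twice, combine with the expression for the curvature ratio from Proposition~\ref{prop:profile_evol}, and reduce the claim to the elementary inequality $u\cosh u \geq \sinh u$ for $u\geq 0$. The only cosmetic differences are that you push the simplification one step further to the tidy closed form $R = u\coth u$ (a nice observation, though the paper stops at the equivalent inequality $uu_{xx}+(1+u_x^2)\leq 0$), and that the intermediate identity $1+u_x^2 = (e^{-2t}-1)/\sinh^2 u$ you derive is not actually needed, since the factor $(1+u_x^2)$ cancels directly when you substitute $\sinh u\, u_{xx} = -\cosh u\,(1+u_x^2)$ into $R$.
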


\begin{proof}
    By the identity \eqref{paperclip identity}, we know $\cos x=e^t \cosh u(x,t)$, where $u$ is the profile of $\overline{\Sigma}_t$. Therefore, by differentiating it twice we get
    \begin{equation}
e^t (u_{xx}\sinh u+|u_x|^2\cosh u)=-\cos x = -e^t \cosh u.
    \end{equation}
By algebraic manipulation, we obtain
    \begin{equation}
        [uu_{xx}+(1+|u_x|^2)]\sinh u=(1+|u_x|^2)[\sinh u- u\cosh u].
    \end{equation}
    Now, we observe that the function $f(u):=\sinh u- u\cosh u$ satisfies $f(0)=0$ and 
    \begin{equation}
     f'(u)=-u\sinh u\leq 0.   
    \end{equation}
Thus, we have $1+|u_x|^2+u_{xx} \leq 0$, which means $\lambda_1 \geq \lambda_2$. This completes the proof.
\end{proof}

\bigskip
Now, we recall the identity (5) in \cite{bourni2021collapsing}, by which $\lambda_1\geq \lambda_2$ is equivalent to $x(\lambda_2)_x\leq 0$ as described in \eqref{eq:lambda_monotone_ratio} below. By using this magical formula, we provide the main result in this section.

\begin{corollary}[{\cite[Lemma 4.4]{bourni2021collapsing}}]\label{cor:height_speed_pancake}
Let $\Sigma_t$ be the Gauss curvature flow from the initial surface $\Sigma_0:=\overline{\Sigma}_{\bar t}$  for some $\bar t<0$, where $\overline{\Sigma}_t$ is the surface of revolution given in Proposition \ref{paperclip_curvature_ratio}. Then, $\Sigma_t$ has the $Z_2\times O(2)$-symmetry, and its displacements $h(t),l(t)$ satisfy
\begin{equation}
-\frac{d}{dt}h \leq \left(\frac{2h}{l^2+h^2}\right)^2. 
\end{equation}
\end{corollary}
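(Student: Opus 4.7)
The plan is to identify $-\dot h(t)$ with the Gauss curvature $K$ at the rotational tip $(h(t),0,0)$ of $\Sigma_t$ and then control this quantity via the touching-circle technique (Proposition \ref{prop-touching}) applied to a planar cross-section, exploiting the preserved curvature-ratio bound $R=\lambda_1/\lambda_2\geq 1$. First, the $Z_2\times O(2)$-symmetry of $\Sigma_t$ is inherited from $\overline{\Sigma}_{\bar t}$ by the uniqueness part of Proposition \ref{prop:viscosity_uniqueness}, since any isometry fixing $\Sigma_0$ also preserves the unique flow. Moreover, because $\overline{\Sigma}_{\bar t}$ is smooth and strictly convex, $\Sigma_t$ remains smooth and strictly convex on some interval $[0,T)$, so the profile-function formalism of Section \ref{sec 3} is available. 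Proposition \ref{paperclip_curvature_ratio} gives $\lambda_1\geq\lambda_2$ at $t=0$, and Theorem \ref{thm:ratio_min} propagates this to all $t\in[0,T)$.

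A direct differentiation of $\lambda_2=1/(uQ)$, using $u_{xx}=-Q^2R/u$, yields the identity
\begin{equation}
(\lambda_2)_x \;=\; \lambda_2\cdot\frac{u_x(R-1)}{u}.
\end{equation}
Because $u$ is even with maximum at $x=0$, $u_x\leq 0$ on $[0,h)$, and combined with $R\geq 1$ this shows $(\lambda_2)_x\leq 0$ on $(0,h)$; hence $\lambda_2$ is monotone non-increasing in $|x|$ and attains its infimum along the profile at the tip $x=\pm h(t)$. At the tip, the $O(2)$-symmetry forces $\lambda_1=\lambda_2$, so $K|_{\mathrm{tip}}=(\lambda_2|_{\mathrm{tip}})^2$, while the full $Z_2\times O(2)$-symmetry forces the tip to travel purely in the $-e_1$ direction at speed $K|_{\mathrm{tip}}$. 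This gives $-\dot h(t)=(\lambda_2|_{\mathrm{tip}})^2$.

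Finally, consider the cross-section $\Gamma_t:=\Sigma_t\cap\{x_3=0\}$, a smooth $Z_2^2$-symmetric closed convex curve in $\mathbb{R}^2$ with the same displacements $h,l$ as $\Sigma_t$. Parametrized as the graph of $\pm u(\cdot,t)$, its planar curvature equals $-u_{xx}/(1+|u_x|^2)^{3/2}=\lambda_1$ at the corresponding point of $\Sigma_t$. In the regime $l\geq h$, Proposition \ref{prop-touching} applied with $z=l$ produces a point $\mathbf{x}_l\in\Gamma_t$ satisfying $\lambda_1(\mathbf{x}_l)\leq 2h/(l^2+h^2)$; combining this with $\lambda_2\leq\lambda_1$ at $\mathbf{x}_l$ and the monotonicity $\lambda_2|_{\mathrm{tip}}\leq\lambda_2(\mathbf{x}_l)$ established above yields $\lambda_2|_{\mathrm{tip}}\leq 2h/(l^2+h^2)$, and squaring gives the claim. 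The main subtlety is exactly the coordination of these inequalities: the touching circle only bounds $\lambda_1$ at an unspecified intermediate point, but the preserved inequality $R\geq 1$ and the monotonicity it forces on $\lambda_2$ are oriented in precisely the right direction to transfer this bound to $\lambda_2$ at the tip, which is the single location where the Gauss curvature governs $\dot h$.
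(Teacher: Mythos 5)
Your proof follows essentially the same route as the paper: inherit the $Z_2\times O(2)$-symmetry from uniqueness, propagate $\lambda_1\geq\lambda_2$ via Theorem~\ref{thm:ratio_min}, deduce that $\lambda_2$ is monotone non-increasing in $|x|$ and hence minimal at the tip, identify $-\dot h$ with $\lambda_2^2$ at the tip, and finish with the touching-circle bound. Your computation $(\lambda_2)_x = \lambda_2\,u_x(R-1)/u$ is correct and agrees with the paper's identity \eqref{eq:lambda_monotone_ratio}.

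However, there is one genuine gap: you write ``in the regime $l\geq h$'' and then apply Proposition~\ref{prop-touching} with $z=l$, but $l\geq h$ is a hypothesis of that proposition and you never verify that it actually holds along the flow. The paper establishes it using the very same ingredients you have already set up: at the tip $-h'=\lambda_2(h,t)^2$, while at the equator $-l'=\lambda_1(0,t)\lambda_2(0,t)$; the monotonicity of $\lambda_2$ gives $\lambda_2(h,t)\leq\lambda_2(0,t)$ and the preserved inequality gives $\lambda_2(0,t)\leq\lambda_1(0,t)$, so $-h'\leq -l'$. Since both $h$ and $l$ vanish at the singular time $T$ (the flow shrinks to a round point by \cite{andrews1999gauss}), integrating $-h'\leq -l'$ from $t$ to $T$ yields $h(t)\leq l(t)$ for all $t\in[0,T)$. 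Without this step your argument does not apply to all $t$ in the statement; adding it closes the gap and your proof coincides with the paper's.
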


\begin{proof}
Since $\overline{\Sigma}_{\bar t}$ has the $Z_2\times O(2)$-symmetry, $\Sigma_t$ also enjoys the same symmetry by the uniqueness result, Proposition \ref{prop:viscosity_uniqueness}. Then, by Theorem \ref{thm:ratio_min} and Proposition \ref{paperclip_curvature_ratio}, $\Sigma_t$ satisfies $\lambda_1 \geq \lambda_2$, namely $1+|u_x|^2+uu_{xx} \leq 0$. Therefore, remembering the $Z_2$-symmetry of $u$ in \eqref{eq:profile_symmetry} we have
\begin{equation}\label{eq:lambda_monotone_ratio}
    x(\lambda_2)_x = -\frac{x u_x}{u^2Q^3}(1+|u_x|^2 + uu_{xx}) \leq 0. 
\end{equation} 
Thus, $\lambda_2(x,t)$ attains minimum at $x=\pm h(t)$. Also, observing $\lambda_2=\lambda_2$ at $\pm h e_1$, we get
\begin{equation}\label{eq:dis_speed_ineq}
    -h'=|\lambda_2(h,t)|^2\leq |\lambda_2(0,t)|^2\leq \lambda_2(0,t)\lambda_1(0,t)=-l'.
\end{equation}
Since $h,l\to 0$ as $t$ approaches to the singular time $T$, integrating \eqref{eq:dis_speed_ineq} yields $l\geq h$ for all $t \in [0,T)$. Hence, we can apply Proposition \ref{prop:displacements_rough_est} with $z=l$ so that
 \begin{equation}
     \lambda_2(h,t) = \min \lambda_2(\cdot,t) \leq \frac{2h}{l^2+h^2}.
 \end{equation}
 Remembering $-h'=|\lambda_2(h,t)|^2$ in \eqref{eq:dis_speed_ineq}, we complete the proof.
\end{proof}

\bigskip

\begin{remark}
We note that by \eqref{eq:dis_speed_ineq} the Gauss curvature of $\Sigma_t$ attains its minimum on the rotation axis $e_1$. Although this fact will not be used in what follows, we record it here, since it is a beautiful observation in its own right.
\end{remark}

\bigskip

\section{Ancient pancake with flat sides}\label{sec 4}

In this section, we construct an ancient pancake with flat sides for the Gauss curvature flow by taking limit of a sequence of flows $\Sigma^j_t$, which we describe below.

\bigskip

\begin{definition}[ancient pancake approximation]
We recall the surface of revolution $\overline{\Sigma}_t\subset \mathbb{R}^3$ in Proposition \ref{paperclip_curvature_ratio}, and for each $j\in \mathbb{N}$ we consider the time $T_j<0$ that the volume of the convex body bounded by $\overline{\Sigma}_{T_j}$ is $\text{Vol}(\overline{\Sigma}_{T_j})=4\pi j$. Then, we define $\Sigma^j_t$ as the Gauss curvature flow from the initial surface $\Sigma^j_{-j}=\overline{\Sigma}_{T_j}$.    
\end{definition}

\bigskip
We observe that $\Sigma^j_t$ has the $Z_2\times O(2)$-symmetry as discussed in Corollary  \ref{cor:height_speed_pancake}. Also,  the volume of the convex body bounded by $\Sigma^j_t$ is
\begin{equation}\label{volume_approximate_flow}
    \text{Vol}(\Sigma^j_t)=4\pi(-t),
\end{equation}
by the definition of $T_j$ and the formula (cf. \cite{tso1985deforming})
\begin{equation}
    \frac{d}{dt}\text{Vol}(\Sigma^j_t)=-\int_{\Sigma^j_t} K dg=-4\pi.
\end{equation}
Hence, by \cite{andrews1999gauss} it exists for all $t<0$ and converges to a round point at the space-time origin.
 \bigskip

\begin{proposition}\label{prop:displacements_rough_est}
$\Sigma^j_t$ has displacements $h_j(t),l_j(t)$ given in Definition \ref{displacements} satisfying
\begin{align}
 &  2|t| \le l_j^2h_j \le 6|t|, && h_j \leq \tfrac{1}{2}\pi, && l_j \geq   2|t/\pi|^{1/2},
\end{align}
for all $t<0$.
\end{proposition}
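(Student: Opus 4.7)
My plan is to deduce all three estimates from the volume identity $\vol(\Sigma^j_t)=4\pi|t|$ of \eqref{volume_approximate_flow} together with two-sided geometric bounds on the volume of a $Z_2\times O(2)$-symmetric convex body in terms of $h_j$ and $l_j$. I would first pin down the horizontal bound via comparison with the initial data, and then extract the remaining two estimates from the volume sandwich.

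For $h_j\leq\pi/2$: the paperclip identity \eqref{paperclip identity}, $\cos\bar\gamma^1=e^t\cosh\bar\gamma^2$, forces $|\bar\gamma^1|<\pi/2$, so the rotated initial surface $\overline{\Sigma}_{T_j}$ lies in the slab $\{|x_1|<\pi/2\}$. Since the Gauss curvature flow contracts convex bodies monotonically in $t$, $\Sigma^j_t$ remains in this slab for every $t\geq -j$, giving $h_j(t)\leq\pi/2$. The $Z_2\times O(2)$-symmetry of $\Sigma^j_t$, used in the next step, follows from uniqueness (Proposition \ref{prop:viscosity_uniqueness}) applied to the symmetric initial data $\overline{\Sigma}_{T_j}$.

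For the sandwich $2|t|\leq l_j^2 h_j\leq 6|t|$: since $\overline{\Sigma}_{T_j}$ is smooth and strictly convex (as a rotation of the paperclip curve), these properties persist under the flow by Andrews' regularity \cite{andrews1999gauss}, so the profile function $u(x,t)$ of Section \ref{sec 3} is defined on $[-h_j,h_j]$, is $Z_2$-symmetric and concave in $x$, and satisfies $u(0,t)=l_j$, $u(\pm h_j,t)=0$. Concavity yields the pointwise sandwich
\begin{equation*}
l_j\bigl(1-|x|/h_j\bigr)\;\leq\; u(x,t)\;\leq\; l_j,
\end{equation*}
and inserting this into $\vol(\Sigma^j_t)=2\pi\int_0^{h_j}u(x,t)^2\,dx$ gives $\tfrac{2\pi}{3}l_j^2 h_j\leq\vol(\Sigma^j_t)\leq 2\pi l_j^2 h_j$. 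Combined with \eqref{volume_approximate_flow} this is exactly $2|t|\leq l_j^2 h_j\leq 6|t|$. Finally, $l_j\geq 2|t/\pi|^{1/2}$ drops out by dividing the lower bound on $l_j^2 h_j$ by $h_j\leq\pi/2$.

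There is no serious obstacle in this argument: all three estimates are elementary consequences of the volume identity and the concavity of the rotationally symmetric profile. The only point requiring some care is confirming that the smooth profile description of Section \ref{sec 3} is available throughout $t\in[-j,0)$, which follows from \cite{andrews1999gauss} together with the smoothness and strict convexity of the initial surface $\overline{\Sigma}_{T_j}$.
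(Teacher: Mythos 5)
Your proof is correct and amounts to essentially the same argument as the paper's. The paper compares the convex body bounded by $\Sigma^j_t$ with the circumscribed cylinder (giving the upper envelope $u\le l_j$) and the inscribed double cone (giving the lower envelope $u\ge l_j(1-|x|/h_j)$), whereas you derive the same two envelope bounds from concavity of the profile and then compute the resulting volumes via the integral formula $\vol = 2\pi\int_0^{h_j}u^2\,dx$; the numbers that come out, $\tfrac{2\pi}{3}l_j^2h_j$ and $2\pi l_j^2h_j$, are precisely the cone and cylinder volumes. One small remark: your appeal to \cite{andrews1999gauss} to justify the smooth profile description is not actually needed here. Convexity of the body of revolution alone guarantees that it is contained in the circumscribed cylinder and contains the inscribed double cone (the latter is just the convex hull of $\pm h_je_1$ and the equatorial disc of radius $l_j$), so the volume sandwich holds for the raw convex bodies without invoking any regularity of the flow — this is the minor advantage of phrasing it as the paper does.
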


\begin{proof}
The cylinder $\{\mathbf{x}: |\mathbf{x}^1| \leq h_j(t), |\mathbf{x}^2|^2+|\mathbf{x}^3|^2 \leq l_j^2(t)\}$ contains $\Sigma^j_t$.  Since its volume is $2\pi l_j^2h_j$, we get $l_j^2h_j \geq 2|t|$ by \eqref{volume_approximate_flow}. Similarly, we consider the cones whose vertices are $\pm h_je_1$, sharing the base $\{\mathbf{x}^1=0,|\mathbf{x}^2|^2+|\mathbf{x}^3|^2 \leq l_j^2(t)\}$. Since they are enclosed by $\Sigma^j_t$, comparing the volumes, we get $l_j^2h_j \leq 6|t|$.

\bigskip

Finally, $h_j(t) \leq \frac{\pi}{2}$ is obvious by definition of $\Sigma^j_{-j}=\overline{\Sigma}_{T_j}$. Combining with $l_j^2h_j \geq 2|t|$, this yields the desired lower bound for $l_j$.
\end{proof}

\bigskip

\begin{lemma}[{\cite[Lemma 4.5 and Corollary 4.6]{bourni2021collapsing}}]\label{lem:uniform_bound_displacement}
There exists $J\geq 1$ such that for $t \leq -10^5$ and $j\geq J$, the displacements $h_j(t),l_j(t)$ of $\Sigma^j_t$ satisfy
   \begin{align}
  &h_j(t)\geq     \tfrac{99}{200}\pi, &&     l_j(t) \le 2|t|^{\frac{1}{2}}.
   \end{align}
\end{lemma}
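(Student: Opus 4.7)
The plan is to bootstrap the crude estimates of Proposition \ref{prop:displacements_rough_est} through the ODE-type bound of Corollary \ref{cor:height_speed_pancake}, after first observing that the initial horizontal displacements $h_j(-j)$ converge to $\tfrac{\pi}{2}$ as $j\to\infty$.

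\medskip

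\emph{Step 1: Initial widths tend to $\tfrac{\pi}{2}$.} Since $\operatorname{Vol}(\overline{\Sigma}_{T_j})=4\pi j\to\infty$ and $\overline{\Sigma}_t$ converges to the infinite slab as $t\to-\infty$, we have $T_j\to-\infty$ as $j\to\infty$. Because $\Sigma^j_{-j}=\overline{\Sigma}_{T_j}$ is the surface of revolution of the paperclip $\overline{\Gamma}_{T_j}$ about the $e_1$-axis, the identity \eqref{paperclip identity} evaluated along $\bar\gamma^2=0$ gives
\begin{equation*}
    h_j(-j)=\max_{\overline{\Gamma}_{T_j}}|\bar\gamma^1|=\cos^{-1}(e^{T_j})\longrightarrow \tfrac{\pi}{2}\quad\text{as } j\to\infty.
\end{equation*}

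\emph{Step 2: Differential inequality for $h_j$.} Combining Corollary \ref{cor:height_speed_pancake} with the bounds $h_j(t)\le\tfrac{\pi}{2}$ and $l_j(t)\ge 2|t/\pi|^{1/2}$ from Proposition \ref{prop:displacements_rough_est}, we obtain for all $t<0$
\begin{equation*}
    -\frac{d}{dt}h_j(t)\leq \left(\frac{2h_j}{l_j^2+h_j^2}\right)^{\!2}\leq \left(\frac{\pi}{4|t|/\pi}\right)^{\!2}=\frac{\pi^4}{16\,t^2}.
\end{equation*}
Integrating from $s=-j$ to $s=t$ (with $t\le -10^5$) yields
\begin{equation*}
    h_j(t)\geq h_j(-j)-\int_{-j}^{t}\frac{\pi^4}{16\,s^2}\,ds\geq h_j(-j)-\frac{\pi^4}{16|t|}.
\end{equation*}

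\emph{Step 3: Conclusion.} Using Step 1, choose $J$ so large that $h_j(-j)\geq \tfrac{\pi}{2}-\tfrac{\pi}{400}$ for all $j\geq J$. Then for $t\le -10^5$,
\begin{equation*}
    h_j(t)\geq \tfrac{\pi}{2}-\tfrac{\pi}{400}-\tfrac{\pi^4}{16\cdot 10^5}\geq \tfrac{99}{200}\pi,
\end{equation*}
since $\tfrac{\pi^4}{16\cdot 10^5}$ is negligible compared to $\tfrac{\pi}{400}$. For the vertical bound, feed this lower bound into the volume estimate $l_j^2 h_j\le 6|t|$ of Proposition \ref{prop:displacements_rough_est}:
\begin{equation*}
    l_j(t)^2\leq \frac{6|t|}{99\pi/200}=\frac{1200}{99\pi}\,|t|<4|t|,
\end{equation*}
where the last inequality uses $\pi>3.031$. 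Taking square roots gives $l_j(t)\le 2|t|^{1/2}$.

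\medskip

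The only mild subtlety is Step 1: it requires identifying that the horizontal extent of the paperclip $\overline{\Gamma}_t$ is attained on the $e_1$-axis and is given by $\cos^{-1}(e^t)$, which is immediate from \eqref{paperclip identity}. The rest is a direct application of the differential inequality supplied by the previous section, and the chosen thresholds (the constants $\tfrac{99}{200}\pi$ and $2|t|^{1/2}$) are tailored so that the argument closes with room to spare.
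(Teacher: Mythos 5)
Your proof is correct and follows essentially the same route as the paper: both arguments combine Corollary \ref{cor:height_speed_pancake} with the crude displacement bounds of Proposition \ref{prop:displacements_rough_est}, integrate the resulting ODE inequality in time, and invoke $h_j(-j)\to\tfrac{\pi}{2}$ to close the argument, with the $l_j$ bound then following from $l_j^2h_j\le 6|t|$. The only difference is cosmetic: you bound $-h_j'\le\pi^4/(16t^2)$ by plugging in $h_j\le\tfrac{\pi}{2}$ directly, while the paper keeps $-h_j'\le h_j^4/t^2$ and integrates the exact Bernoulli form $\tfrac{d}{ds}h_j^{-3}\le 3/s^2$; both yield the same conclusion with room to spare.
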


\begin{proof}
Since Proposition \ref{prop:displacements_rough_est} implies $2|t|\leq h_j l_j^2$, combining with Corollary \ref{cor:height_speed_pancake} we have
\begin{equation}
    -h_j'\leq \left(\frac{2h_j}{l_j^2+h_j^2}\right)^2\leq  \frac{4h_j^2}{l_j^4} \leq \frac{h_j^4}{t^2},
\end{equation}
for all $i \in \mathbb{N}$ and $t<0$. Therefore, for $t \in (-j,-10^5)$ we have
\begin{equation}
\frac{1}{|h_j(t)|^3}-\frac{1}{|h_j(-j)|^3}=\int_{-j}^t  \frac{d}{ds}[h_j(s)]^{-3}ds\leq  \int_{-\infty}^t \frac{3}{s^2}ds \leq \frac{3}{|t|} \leq 10^{-4}.
\end{equation}
Since $h_j(-j)\to \pi/2$ as $j\to +\infty$, we have the first inequality for $j\geq J$ and $t\leq -10^5$. Then, the second inequality follows from $l_j^2h_j\leq 6|t|$ in Proposition \ref{prop:displacements_rough_est}.
\end{proof} 

\bigskip

Now, for each $b \in (0,4]$ and $t<0$ we define an inner barrier with flat side by
\begin{equation}\label{frying_pan}
     \Phi^b_t:=\partial \{(x_1,x_2,x_3)\,:\, |x_1|\le b, \, |(x_2,x_3)|\le \varphi^b(|x_1|,t)\},
\end{equation}
where
\begin{equation}
\phi^b(x,t)=\tfrac{1}{2}(-2t)^{1/2}+\sqrt{16-(x+4-b)^2}.
\end{equation}
We observe that $\Phi^b_t$ has a flat side $\{(b,x_2,x_3):  |(x_2,x_3)| \leq \tfrac{1}{2}(-2t)^{1/2} \}$, which makes $\Phi^b_t \cap \{x_1\ge0\}$ look like a \textit{frying pan}.

\bigskip

\begin{lemma}[ancient frying pan]\label{lem:frying_pan}
    $\phi^b$ is a subsolution to \eqref{eq:profile_equation}.
\end{lemma}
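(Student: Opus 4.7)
I would verify directly the pointwise differential inequality
\[
\phi^b_t \;\leq\; \frac{\phi^b_{xx}}{\phi^b(1+|\phi^b_x|^2)^{3/2}},
\]
which is the appropriate subsolution form of \eqref{eq:profile_equation} here: the operator on the right is monotone increasing in $u_{xx}$ (the coefficient $1/(u(1+u_x^2)^{3/2})$ is positive), so this inequality is precisely what allows a standard contact-point comparison to turn $\phi^b$ into an inner barrier for the profile evolution.

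\textbf{Key reduction.} The main simplification comes from introducing the shifted coordinate $y := x + 4 - b$, under which the spatial part of $\phi^b$ is exactly the upper semicircle $\sqrt{16-y^2}$ of radius $4$, lifted vertically by $\tfrac{1}{2}\sqrt{-2t}$. A direct calculation gives
\[
\phi^b_t = -\tfrac{1}{2\sqrt{-2t}}, \qquad \phi^b_x = \tfrac{-y}{\sqrt{16-y^2}}, \qquad 1+|\phi^b_x|^2 = \tfrac{16}{16-y^2}, \qquad \phi^b_{xx} = \tfrac{-16}{(16-y^2)^{3/2}}.
\]
Substituting into the right-hand side of the inequality, every $(16-y^2)$-factor cancels and one obtains the clean identity
\[
\frac{\phi^b_{xx}}{(1+|\phi^b_x|^2)^{3/2}} \;=\; -\tfrac{1}{4},
\]
which is simply the geometric fact that the first principal curvature of a circle of radius $4$ equals $\tfrac{1}{4}$ (equivalently, $\lambda_1 \equiv \tfrac{1}{4}$ along the circular part of $\phi^b$).

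\textbf{Closing the inequality.} With this identity in hand, the subsolution inequality collapses to the pointwise assertion
\[
-\tfrac{1}{2\sqrt{-2t}} \;\leq\; -\tfrac{1}{4\phi^b(x,t)},
\]
or equivalently $\phi^b(x,t)\geq \tfrac{1}{2}\sqrt{-2t}$. This is built into the definition of $\phi^b$, since $\sqrt{16-y^2}\geq 0$ on its domain, with equality attained only along the flat-side boundary $y = \pm 4$ (i.e., $x = \pm b$). So the required inequality holds everywhere, with equality exactly on the seam where the curved part meets the flat disk.

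\textbf{Anticipated obstacle.} The computation itself is essentially a one-liner, so there is no real analytic difficulty. The only items that need to be tracked carefully are bookkeeping issues: first, the sign convention must be the $\leq$ one so that combined with the monotonicity $F[\phi^b]\leq F[u]$ at a contact point from below (which uses $\phi^b_{xx}\leq u_{xx}$ and the positivity of the coefficient) one obtains $\phi^b_t\leq F[\phi^b]\leq F[u] = u_t$, thereby preserving $\phi^b\leq u$; second, $\phi^b_x$ blows up as $x\to \pm b$, but that is precisely the flat-side seam where $\phi^b$ meets the flat disk vertically, and in the viscosity framework this presents no issue when $\phi^b$ is used as a barrier for $\Sigma^j_t$.
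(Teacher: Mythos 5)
Your proof is correct and essentially identical to the paper's: both compute $\phi^b_t = -\tfrac{1}{2\sqrt{-2t}}$, both reduce $\phi^b_{xx}(1+|\phi^b_x|^2)^{-3/2}$ to the constant $-\tfrac14$ (the paper by noting the graph is a circular arc, you by direct computation), and both close the inequality by observing $\phi^b \ge \tfrac12\sqrt{-2t}$, which the paper phrases as $-\tfrac{1}{4(\phi-\sqrt{16-(x+4-b)^2})} < -\tfrac{1}{4\phi}$. The only trivial slip in your write-up is that the seam where $\sqrt{16-y^2}=0$ occurs at $y=4$ (i.e.\ $x=b$) only, since $y=x+4-b\in[4-b,4]$ for $x\in[0,b]$; this does not affect the argument.
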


\begin{proof}
The graph of $\phi$ is an arc of a circle of radius $4$. Hence, by considering its curvature, we have $\phi_{xx}(1+|\phi_x|^2)^{-\frac{3}{2}}=-4^{-1}$. Thus, we can directly compute
\begin{equation}
    \phi_t=-\frac{1}{4(\phi-\sqrt{16-(x+4-b)^2})}<-\frac{1}{4\phi}=\frac{\phi_{xx}}{\phi(1+|\phi_x|^2)^{3/2}}.
\end{equation}
\end{proof}

 \bigskip

\begin{theorem}[Theorem \ref{Main theorem (slab)}]
There exists an ancient pancake $\Sigma_t$ for the Gauss curvature flow such that it develops a singularity at the space-time origin, and $\Sigma_t$ has flat sides including $\{(\pm \frac{1}{2}\pi, r\cos\theta,r\sin\theta): r^2\leq  -\frac{1}{2}t, \theta \in S^1\}\subset \Sigma_t$ for $t\leq -100$. Moreover, $\Sigma_t$ is of class $C^{1,1}$.
\end{theorem}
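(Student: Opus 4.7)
The plan is to construct $\Sigma_t$ as a subsequential Hausdorff limit of the approximate flows $\Sigma^j_t$, and then verify the asserted properties in turn.

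First, by the displacement bounds in Proposition \ref{prop:displacements_rough_est} and Lemma \ref{lem:uniform_bound_displacement}, the surfaces $\Sigma^j_t$ lie in a common compact subset of $\mathbb{R}^3$ on each compact subinterval of $(-\infty, 0)$ for $j$ sufficiently large. A diagonal extraction combined with the compactness Lemma \ref{lemma-extra} yields a subsequence converging in Hausdorff distance to a one-parameter family $\Sigma_t$ of $Z_2 \times O(2)$-symmetric closed convex surfaces, which is a viscosity solution of the Gauss curvature flow on $(-\infty, 0)$. The volume identity $\vol(\Sigma_t) = 4\pi|t|$ passes to the limit by continuity under Hausdorff convergence, so the flow degenerates as $t \to 0^-$; since each $\Sigma^j_t$ shrinks to the origin at $t = 0$ by symmetry and Andrews' convergence theorem \cite{andrews1999gauss}, the limit has its singularity at the space-time origin.

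Next, to show that $\Sigma_t$ is asymptotic to the slab $\{|\mathbf{x} \cdot e_1| \leq \pi/2\}$, I integrate the ODE estimate in the proof of Lemma \ref{lem:uniform_bound_displacement} to get $h_j(t)^{-3} \leq h_j(-j)^{-3} + 3/|t|$. The paperclip identity \eqref{paperclip identity} together with $T_j \to -\infty$ yields $h_j(-j) \to \pi/2$, so in the limit $h(t)^{-3} \leq (\pi/2)^{-3} + 3/|t|$. Combined with the trivial $h(t) \leq \pi/2$, letting $t \to -\infty$ gives $h(t) \to \pi/2$.

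For the flat sides, I use the frying pan $\Phi^b_t$ of Lemma \ref{lem:frying_pan} as an inner barrier for each $b \in (0, \pi/2)$: since $\phi^b$ is a subsolution of the profile PDE \eqref{eq:profile_equation}, $\Phi^b_t$ is a viscosity supersolution of the Gauss curvature flow via the reduction in Proposition \ref{prop:profile_evol}. I verify the initial containment $\Phi^b_{-j} \subset K^j_{-j}$ for $j$ large by comparing $\phi^b(x, -j) = \sqrt{j/2} + \sqrt{16-(x+4-b)^2}$ with the paperclip-revolution profile $\bar u_j(x) = \operatorname{arccosh}(\cos x \cdot e^{-T_j}) \sim |T_j| + \log(2\cos x)$, noting $|T_j| \sim 2\sqrt{j/\pi} > \sqrt{j/2} + O(1)$ for $j$ large. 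The degenerate parabolic comparison principle then propagates the inclusion $\Phi^b_t \subset K^j_t$ so long as the flat side at $x_1 = \pm b$ remains strictly inside the support of $u^j(\cdot, t)$. Passing $j \to \infty$ gives $\{(\pm b, y, z) : y^2 + z^2 \leq -t/2\} \subset K_t$ on an appropriate time range, and finally $b \to \pi/2$ together with the closedness of $K_t \subset \{|\mathbf{x} \cdot e_1| \leq \pi/2\}$ yields the advertised flat side $\{(\pm \pi/2, r\cos\theta, r\sin\theta) : r^2 \leq -t/2\} \subset \Sigma_t$ for $t \leq -100$. The $C^{1,1}$ regularity of $\Sigma_t$ then follows from Andrews' regularity theorem \cite{andrews1999gauss} applied to the compact ancient viscosity solution.

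The main obstacle is in the flat-side step: one must verify the initial containment at $t = -j$ between the frying pan and the paperclip revolution, and apply the degenerate parabolic comparison principle on a moving free-boundary domain, ensuring the flat side of $\Phi^b$ stays inside the support of $u^j(\cdot, t)$ throughout the comparison window. The remaining steps follow routinely from the preliminary estimates, the comparison principle, and the cited results of Andrews.
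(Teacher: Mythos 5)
Your proposal is correct and follows essentially the same route as the paper: extract a subsequential Hausdorff limit of the $\Sigma^j_t$ using the displacement bounds (Proposition \ref{prop:displacements_rough_est}, Lemma \ref{lem:uniform_bound_displacement}), Blaschke selection, and the compactness Lemma \ref{lemma-extra}; obtain the singularity at the space-time origin and the $C^{1,1}$ regularity from the volume identity \eqref{volume_approximate_flow} and Andrews' theorem \cite{andrews1999gauss}; and exhibit the flat sides by comparing against the frying-pan subsolutions $\Phi^b_t$ of Lemma \ref{lem:frying_pan}, passing first $j\to\infty$ and then $b\to\pi/2$. The only cosmetic difference is that you verify the initial containment $\Phi^b_{-j}\subset K^j_{-j}$ via an explicit asymptotic for the paperclip-revolution profile, whereas the paper compares displacements directly using $l_j(-j)\ge 2\sqrt{j/\pi} > \sqrt{j/2}+O(1)$; both rest on the same inequality $2/\sqrt{\pi}>1/\sqrt{2}$, and both leave the free-boundary comparison at the same level of detail, so the substance is identical.
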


 \begin{proof}
 Thanks to the bounds for $h_j(t),l_j(t)$ in Proposition \ref{prop:displacements_rough_est}, Lemma \ref{lem:uniform_bound_displacement} and the Blaschke selection theorem (together with a diagonal argument), we can choose a subsequence $n_j\to \infty$ such that \[\Sigma^{n_j}_{t_i}\to \Sigma_{t_i}  ,\]
as $n_j\to \infty$ for each time $t_i:=-10^5-i$, $i=1,2,\ldots$. 

 By Lemma \ref{lemma-extra}, for each time $t\in(t_i,0)$, $\Sigma^{n_j}_{t}$ converges to $\Sigma_t$ which is the unique viscosity solution running from $\Sigma^{n_j}_{t_i}$. Since this holds for all $t_i$, we obtain an ancient solution to the Gauss curvature flow $\Sigma_t$ as the limit of $\Sigma^{n_j}_t$.  We note that $\Sigma_t$ has the $Z_2\times O(2)$-symmetry and satisfies the volume identity \eqref{volume_approximate_flow}. Thus, by \cite{andrews1999gauss}, it converges to a round point at the space-time origin, and it is of class $C^{1,1}$.

 \bigskip

 To show the existence of flat sides in $\Sigma_t$, we recall the fact that the paperclip $\overline{\Gamma}_t$ in \eqref{paperclip identity} converges to the translating grim reapers of unit speed around its tips. Also, by Proposition \ref{prop:displacements_rough_est} we have $l_j(-j)- \phi^b(0,-j)\to +\infty$ as $j\to \infty$, where we used $\frac{2}{\sqrt \pi}> \frac{1}{\sqrt 2}$. Hence, for each $b\in (0,\pi/2)$ there is $J_b$ such that $\Phi^b_{-j}$ is enclosed by $\Sigma^j_{-j}$. Moreover, we have $l_j(t)\geq \phi^b(0,t)$ for $t\leq -100$ and $j \in \mathbb{N}$. Therefore, by Lemma \ref{lem:frying_pan} and the maximum principle, $\Phi^b_t$ is enclosed by $\Sigma^j_t$ for $t\in (-j,-10^5]$ and $j \geq J_b$. Thus, passing $j \to \infty$, $\Phi^b_t$ is enclosed by $\Sigma_t$ for $t\in (-\infty,-10^5]$. Hence, passing $b\to \pi/2$ completes the proof.
 \end{proof}

\bigskip
\bigskip
\bigskip

\section{A priori estimate for speed}
\label{sec 5}

In the following sections, we construct compact ancient solution in a radially symmetric cylinder $B_{r_\alpha} \times \R \subset \R^3$ to the $\alpha$-GCF that has a $O(2)\times Z_2$-symmetry. For a sake of convenience, the radius of cylinder $r_\alpha>0$ is chosen so that the unique translator asymptotic to the cylinder (\cite{urbas1988global}, \cite{urbas1999complete}) has the unit speed. Namely, 
\[
r_\alpha^2 = \frac{1}{\pi}\int_{\R^2} \frac{1}{(1+|p|^2)^{2 - \frac{1}{2\alpha}}} dp = \frac{2\alpha}{2\alpha-1}.
\]
For the formula, see equation (4.5) of \cite{urbas1999complete}.

We parametrize strictly convex closed hypersurfaces by their outward normal $\nu$. To do so, we adopt the parametrization by the polar angles $\varphi \in [0,\pi]$ and $\theta\in[0,2\pi]$ as 
\[\nu = ( \sin\varphi\cos\theta,\sin\varphi\sin\theta,\cos\varphi) \in \mathbb{S}^2 .\]
Since we assume O(2)-symmetry, the parameter $\theta$ will mostly be omitted. 

\begin{lemma} [principle curvatures]
    \label{principal curvatures} For a given $C^2$ closed strictly convex hypersurface which has a $O(2)$-symmetry in the $x_1x_2$-plane, let $S(\varphi,\theta) = S(\varphi)$ be the support function.  Then two principle curvatures at each point of the surface are given by
    \begin{equation}
    \lambda_1 = (S + S_{\varphi\varphi})^{-1}, \quad \lambda_2 = (S + \cot\varphi S_\varphi)^{-1}.    
    \end{equation}
        Here, $\lambda_1$ is the curvature of the curve made by the intersection of the surface and a plane containing $x_3$-axis. 
\end{lemma}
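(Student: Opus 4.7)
\emph{Proof proposal.} The plan is to parametrize the surface by its outer normal via the classical identity
\[
X(\nu) = S(\nu)\,\nu + \bar\nabla S(\nu),
\]
where $\bar\nabla$ denotes the gradient on $S^2$ with the round metric $d\varphi^2 + \sin^2\varphi\,d\theta^2$, and then to read off the two principal curvatures directly from this parametrization. The argument then naturally splits into three steps.

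Under the $O(2)$-invariance $S = S(\varphi)$, the spherical gradient reduces to $\bar\nabla S = S_\varphi\, e_\varphi$, where $e_\varphi := \partial_\varphi \nu = (\cos\varphi\cos\theta,\cos\varphi\sin\theta,-\sin\varphi)$ is the unit tangent to $S^2$ in the $\varphi$-direction. Assembling then gives
\[
X(\varphi,\theta) = \bigl(\rho\cos\theta,\ \rho\sin\theta,\ S\cos\varphi - S_\varphi\sin\varphi\bigr),\qquad \rho(\varphi):=S\sin\varphi + S_\varphi\cos\varphi,
\]
which realizes $\Sigma$ explicitly as a surface of revolution about the $x_3$-axis with meridian distance $\rho$.

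For the meridional curvature $\lambda_1$, I would restrict to the plane $\theta=0$. The intersection is a convex planar curve whose 2D outward unit normal at parameter $\varphi$ is $(\sin\varphi,\cos\varphi)$ and whose 2D support function coincides with $S(\varphi)$. The classical planar formula $\kappa = (S+S_{\varphi\varphi})^{-1}$ then yields $\lambda_1 = (S + S_{\varphi\varphi})^{-1}$, and since the meridian plane contains $\nu$, this planar curvature is indeed the normal curvature of $\Sigma$ in the meridian direction. For the azimuthal curvature $\lambda_2$, I would invoke the standard surface-of-revolution formula $\lambda_2 = \sin\varphi/\rho$, which is checked either by Meusnier applied to the parallel circle of radius $\rho$ (using that $\nu$ meets the rotation axis at angle $\varphi$), or by a direct computation of the second fundamental form along $\partial_\theta X$. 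Substituting $\rho = S\sin\varphi + S_\varphi\cos\varphi$ yields $\lambda_2 = (S+\cot\varphi\,S_\varphi)^{-1}$.

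The only delicate step is the first one, namely deriving $\rho$ from the support-function identity; the remaining steps are routine consequences of standard surface-of-revolution formulae. An equivalent, more mechanical route is to apply the general formula $r_{ij} = \bar\nabla_i\bar\nabla_j S + S\,\bar g_{ij}$ for the radii-of-curvature matrix in spherical coordinates, using $\Gamma^\varphi_{\theta\theta}=-\sin\varphi\cos\varphi$ to evaluate the $(\theta,\theta)$ entry; the diagonality of $r_{ij}$ (forced by the $\theta$-invariance) then delivers the eigenvalues $S+S_{\varphi\varphi}$ and $S+\cot\varphi\,S_\varphi$ directly.
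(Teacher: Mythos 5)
Your proposal is correct, and its primary argument takes a genuinely different (and more concrete) route than the paper's. The paper goes straight to the radii-of-curvature matrix $b_i^j = (\bar\nabla^2_{ik}S + S\bar g_{ik})\bar g^{kj}$, computes the Christoffel symbols of the round sphere in $(\varphi,\theta)$-coordinates, and reads off the eigenvalues of the resulting diagonal matrix. You instead use the position-vector identity $X = S\nu + \bar\nabla S$ to exhibit $\Sigma$ explicitly as a surface of revolution with meridian distance $\rho = S\sin\varphi + S_\varphi\cos\varphi$, then obtain $\lambda_1$ from the classical planar support-function formula applied to the meridian section (after checking that the meridian plane contains $\nu$, so the planar curvature really is a normal curvature) and $\lambda_2$ from the surface-of-revolution formula $\lambda_2 = \sin\varphi/\rho$ via Meusnier. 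Both are sound; the paper's computation is more mechanical and generalizes immediately to arbitrary convex hypersurfaces without symmetry, while yours buys geometric transparency — in particular it makes the claim in the lemma that ``$\lambda_1$ is the curvature of the meridian section'' manifest rather than a byproduct. The ``more mechanical route'' you sketch at the end is essentially verbatim the paper's proof.

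One small point worth making explicit in a write-up: when you invoke the planar formula $\kappa = (S + S_{\varphi\varphi})^{-1}$ for the meridian, you should verify (as you implicitly do) that the restriction of $S$ to the meridian plane really is the 2D support function of that planar curve parametrized by normal angle $\varphi$. This is a one-line check — $\langle X,\nu\rangle$ at $\theta=0$ reduces to $\rho\sin\varphi + (S\cos\varphi - S_\varphi\sin\varphi)\cos\varphi = S$ — but it is the step that makes the reduction to the planar case legitimate rather than merely plausible.
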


\begin{proof} Let $i = 1$ correspond to the coordinate $\varphi$ and $i= 2$ correspond to $\theta$.    Let $\Bar{\nabla}$ be the standard Riemannian connection on $(S^2, \Bar{g}_{ij})$. Recall that the eigenvalues of $(\Bar{\nabla}^2_{ik} S + S\bar{g}_{ik})\bar{g}^{kj} =: b_i^j$ corresponds to $\lambda_i^{-1}$. Then
    \begin{equation}
        \bar g_{11} = 1, \quad \bar g_{12} = \bar g_{21} = 0, \quad \bar g_{22} = \sin^2 \varphi,
    \end{equation}
    \begin{equation}
        \bar \Gamma_{12}^2 = \bar \Gamma_{21}^2 = \cot\varphi, \quad \bar \Gamma_{22}^1 = -\sin\varphi \cos\varphi, \quad \text{other } \bar \Gamma_{jk}^i = 0.
    \end{equation}
    Thus $b_1^1 = (S + S_{\varphi\varphi})$, $b_2^1 = b_1^2 = 0$, and $b_2^2 = (S\sin^2\varphi + S_\varphi \sin\varphi\cos\varphi)\sin^{-2}\varphi$.
\end{proof}

\bigskip

In the next two lemmas, we prove a priori estimates on the speed $K^\alpha$ for smooth solutions showing that certain estimates on the speed are preserved under the flow. The results here are motivated by Proposition 3.2 and 3.3 of \cite{bourni2022ancient}.

\begin{lemma} [comparison with translator]
\label{barrier lemma}  Let $\Sigma_t$, for $t\in[0,T]$, be a smooth strictly convex closed solution to the $\alpha$-GCF in $\mathbb{R}^3$ which has a rotational symmetry in the $x_1x_2$-plane. If initial data $\Sigma_0$ satisfies $K^\alpha(\varphi)\ge |\cos \varphi|$ for all $\varphi \in[0,\pi]$, then the inequality is preserved at later times. Namely, 
\begin{equation}
    K^\alpha (\varphi,t)\ge |\cos \varphi|,
\end{equation}
for $t\in [0,T]$ and $\varphi\in [0,\pi]$.
\end{lemma}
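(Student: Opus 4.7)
The plan is to apply a parabolic maximum principle to the difference $G := K^\alpha - |\cos\varphi|$, parametrized on the Gauss sphere $S^2$ via the outward normal. The function $|\cos\varphi|$ is precisely the speed at which a unit-speed translator (moving in the $-e_3$-direction) would evolve at a point with outward normal of polar angle $\varphi$, so the lemma is a preservation-of-comparison statement between the compact flow and such a (non-compact) translator-speed barrier. The key structural observation is that on each open hemisphere $\{\cos\varphi \neq 0\}$ the barrier satisfies the identity $\bar\nabla^2|\cos\varphi| = -|\cos\varphi|\,\bar g$, verifiable directly in polar coordinates using the Christoffel symbols recorded in the proof of Lemma \ref{principal curvatures}.

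The first step is the evolution equation of $F := K^\alpha$ on $S^2$. Writing $W_{ij} := \bar\nabla^2_{ij} S + S\bar g_{ij}$ for the shape matrix (so $\lambda_i^{-1}$ are its eigenvalues) and $b^{ij} := (W^{-1})^{ij}$, one has $F = (\det W)^{-\alpha}$ together with $S_t = -F$. Differentiating $\log F = -\alpha \log\det W$ in time with $\partial_t W_{ij} = -F_{ij} - F\bar g_{ij}$ yields
\begin{equation}
F_t \;=\; \alpha F\, b^{ij} F_{ij} \;+\; \alpha F^2 H,
\end{equation}
where $H = \lambda_1 + \lambda_2$. The cancellation is immediate: at any point where $F = |\cos\varphi|$ and $\bar\nabla^2 F \geq \bar\nabla^2|\cos\varphi| = -|\cos\varphi|\,\bar g$, one obtains $b^{ij} F_{ij} \geq -|\cos\varphi|\, H = -F H$, hence $F_t \geq \alpha F(-F H) + \alpha F^2 H = 0$.

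With this cancellation in hand I would run the standard perturbation form of the maximum principle. By smoothness and strict convexity on $[0,T]$, there is $C_0$ with $\alpha F H \leq C_0$ on $S^2 \times [0,T]$; set $v := G + \varepsilon e^{C_0 t}$, which is strictly positive at $t=0$. If $v$ first vanishes at some interior point $(\varphi_0,t_0)$, then $\varphi_0$ cannot be the equator because strict convexity gives $F(\pi/2, t_0) > 0 = |\cos(\pi/2)|$ and hence $v(\pi/2,t_0) \geq \varepsilon e^{C_0 t_0} > 0$. Thus $|\cos\varphi|$ is smooth near $\varphi_0$, and the spatial minimum conditions $\bar\nabla G = 0$, $\bar\nabla^2 G \geq 0$ together with the cancellation above yield $v_t(\varphi_0,t_0) \geq (C_0 - \alpha F H)\varepsilon e^{C_0 t_0} > 0$, contradicting $v_t \leq 0$ at a first minimum. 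Sending $\varepsilon \to 0$ gives $G \geq 0$ on $[0,T]$.

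The only (mild) obstacle in this plan is the non-smoothness of the barrier at the equator, which the strict-convexity argument disposes of; apart from that, the proof is a completely standard application of the maximum principle once the support-function evolution and the spherical hessian identity $\bar\nabla^2|\cos\varphi| = -|\cos\varphi|\,\bar g$ are both on the table. No additional analytic tools are required.
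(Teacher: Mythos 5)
Your proof is correct and takes essentially the same route as the paper: both linearize the evolution of $K^\alpha$ around the translator speed $|\cos\varphi|$ and apply the parabolic maximum principle, with your identity $\bar\nabla^2|\cos\varphi| = -|\cos\varphi|\,\bar g$ being exactly the paper's observation that $\cos\varphi$ solves the same PDE as $K^\alpha$ (so that $u = K^\alpha \pm \cos\varphi$ satisfies the linearized equation). The only difference is cosmetic: you work invariantly on all of $S^2$ with the explicit $\varepsilon e^{C_0 t}$ perturbation and dispose of the equatorial corner via strict convexity, whereas the paper splits into the two hemispheres and invokes the maximum principle for the resulting linear PDE in the $\varphi$-coordinate.
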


\begin{proof} 
    Define $u := K^\alpha(\varphi, t)+ \cos\varphi$ for $\frac{\pi}{2} < \varphi < \pi$. The sign changes for $0 < \varphi < \frac{\pi}{2}$. By Lemma \ref{principal curvatures},
    \begin{equation}
    S_t =  -K^\alpha = -(\lambda_1\lambda_2)^\alpha = -[(S+S_{\varphi\varphi})(S + \cot\varphi S_\varphi)]^{-\alpha}.
        \end{equation}
    Then 
    \begin{equation}
        \label{evolution of K^alpha}
        (K^\alpha)_t = \alpha K^{\alpha+1}[\lambda_2^{-1}(K^\alpha + (K^\alpha)_{\varphi\varphi}) + \lambda_1^{-1}(K^\alpha + \cot\varphi(K^\alpha)_\varphi].
    \end{equation}
    Note also that $\cos\varphi$ solves the same equation as \eqref{evolution of K^alpha}, so
    \begin{equation}
    \label{parabolic PDE for u}
        u_t = \alpha K^{\alpha + 1}[\lambda_2^{-1}(u+u_{\varphi\varphi}) + \lambda_1^{-1}(u+\cot\varphi u_\varphi)].
    \end{equation}
    
    Since $\cos\varphi \le 0$ on $[\frac{\pi}{2}, \pi]$, we have $u(\varphi, 0) \ge 0$. Then the maximum principle gives $u \ge 0$ on $[\frac{\pi}{2}, \pi] \times [0, T]$. Note that $u = K^\alpha - \cos\varphi$ on $[0, \frac{\pi}{2}]$ gives the same result.
\end{proof}

\bigskip

The next lemma shows a monotonicity of the Gauss curvature with respect to the angle $\varphi$ is preserved.

\begin{lemma} [monotonicity of speed in normal angle]
    \label{K^alpha monotonicity lemma} Let $\Sigma_t$, for $t\in[0,T]$, be a smooth strictly convex closed solution to the $\alpha$-GCF in $\mathbb{R}^3$ which has a rotational symmetry in the $x_1x_2$-plane and a reflection symmetry with respect to $x_3=0$, namely it has $O(2)\times Z_2$-symmetry. For initial surface $\Sigma_0$, suppose we have $\partial_\varphi K^\alpha\le0$ for $\varphi\in[0,\frac{\pi}{2}]$. Then the inequality is preserved at later times.  \begin{equation}\label{K^alpha monotonicity} \partial_\varphi K^\alpha\le0,\end{equation} for $t\in[0,T]$ and $\varphi\in[0,\frac{\pi}{2}]$.
    
\end{lemma}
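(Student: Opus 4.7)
The plan is to derive a scalar parabolic PDE for $w := \p_\varphi K^\alpha$ on $\varphi \in [0,\pi/2]$ and conclude $w \le 0$ from the parabolic maximum principle. Write $v := K^\alpha$ for brevity.

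First I would record the boundary behavior of $w$. Since $\Sigma_t$ is smooth and $O(2)$-rotationally symmetric, $K^\alpha$ is a smooth function on the Gauss sphere that is independent of $\theta$ and even in $\varphi$ near the pole $\varphi = 0$; hence $w(t, 0) = 0$. The $Z_2$-symmetry gives $K^\alpha(\varphi,t) = K^\alpha(\pi - \varphi, t)$, so $w(t, \pi/2) = 0$. Thus $w$ satisfies homogeneous Dirichlet data at both endpoints for every $t \in [0,T]$.

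Next I would differentiate \eqref{evolution of K^alpha} in $\varphi$. Most of the resulting terms are proportional to $w$, $w_\varphi$, or $w_{\varphi\varphi}$; the only term not of this form comes from differentiating the factor $(\lambda_1+\lambda_2)v$ and equals $\alpha v^2\bigl[(\lambda_1)_\varphi + (\lambda_2)_\varphi\bigr]$. The key computation rewrites this quantity so that it either combines with the $w$-terms or has a favourable sign. From $\lambda_1^{-1} = S + S_{\varphi\varphi}$ and $\lambda_2^{-1} = S + \cot\varphi\, S_\varphi$ in Lemma \ref{principal curvatures}, a direct calculation yields
\[
(\lambda_2^{-1})_\varphi = \cot\varphi\,(\lambda_1^{-1} - \lambda_2^{-1}), \qquad (\lambda_2)_\varphi = \cot\varphi\cdot\frac{\lambda_2(\lambda_1 - \lambda_2)}{\lambda_1}.
\]
Combining with the tautology $\alpha(\ln K)_\varphi = w/v$, which solves for $(\lambda_1)_\varphi$ in terms of $w$ and $(\lambda_2)_\varphi$, I obtain
\[
(\lambda_1)_\varphi + (\lambda_2)_\varphi = \frac{\lambda_1\, w}{\alpha v} - \cot\varphi\cdot\frac{(\lambda_1 - \lambda_2)^2}{\lambda_1}.
\]
Substituting back, the PDE for $w$ takes the form
\[
w_t = \alpha v \lambda_1\, w_{\varphi\varphi} + B\, w_\varphi + C\, w - \alpha v^2 \cot\varphi\cdot\frac{(\lambda_1 - \lambda_2)^2}{\lambda_1},
\]
with coefficients $B, C$ bounded on $[0,T]\times[0,\pi/2]$. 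Since $\cot\varphi \ge 0$ on $[0,\pi/2]$, the inhomogeneous term is non-positive. Combined with $w(0,\cdot) \le 0$ and $w(t,0) = w(t,\pi/2) = 0$, the parabolic maximum principle applied to $e^{-Kt}w$ (with $K$ larger than $\sup|C|$) yields $w \le 0$ throughout $[0,T]\times[0,\pi/2]$.

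The main obstacle is the identification of $(\lambda_1)_\varphi + (\lambda_2)_\varphi$ in the form above. A priori it is a source term of undetermined sign; only after absorbing $\lambda_1 w/(\alpha v)$ into the zeroth-order coefficient via $(\ln K^\alpha)_\varphi = w/v$ does the remainder become manifestly non-positive on $[0,\pi/2]$. A secondary point is that the apparent $1/\varphi$ singularities at the pole $\varphi = 0$ are removable: $O(2)$-symmetry forces $\lambda_1 = \lambda_2$ at $\varphi = 0$, so $\lambda_1 - \lambda_2$ vanishes to first order and $\cot\varphi\,(\lambda_1-\lambda_2)^2$ extends smoothly across the pole.
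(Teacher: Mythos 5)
Your argument is correct and follows the same overall strategy as the paper's proof: differentiate the evolution equation \eqref{evolution of K^alpha} in $\varphi$, use the $O(2)\times Z_2$-symmetry to get homogeneous Dirichlet data at $\varphi=0$ and $\varphi=\pi/2$, and invoke the parabolic maximum principle. The genuine difference is in the treatment of the zeroth-order source. The paper's displayed computation ends by asserting the equality $v_t = \alpha K^{\alpha+1}\lambda_2^{-1}v_{\varphi\varphi} + Bv_\varphi + Cv$, which would require $(\lambda_1^{-1})_\varphi + (\lambda_2^{-1})_\varphi$ to be a multiple of $v$; it is not. As you show via the clean identity $(\lambda_2^{-1})_\varphi = \cot\varphi(\lambda_1^{-1}-\lambda_2^{-1})$ and the tautology $\alpha(\ln K)_\varphi = w/v$, the residual term is $-\cot\varphi\,(\lambda_1-\lambda_2)^2/\lambda_1$ — a genuine inhomogeneity, not a multiple of $w$ — but one with the favorable non-positive sign on $[0,\pi/2]$. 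This is the key observation that makes the maximum-principle argument close; your write-up makes it explicit where the paper's display is, strictly read, imprecise. Your remark that the apparent $\cot\varphi$ singularity at the pole is removable because $\lambda_1=\lambda_2$ there is also a detail worth recording that the paper omits.
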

\begin{proof}
    Define $v := \partial_\varphi K^\alpha$. Differentiating \eqref{evolution of K^alpha} gives that
    \begin{align}
        \begin{split}
            \label{v_t split ver}
            v_t &= \alpha K^{\alpha +1}[\lambda_2^{-1}(v + v_{\varphi\varphi}) + \lambda_1^{-1}(v + \cot\varphi v_\varphi)] \\
            &+ (\alpha +1) K v [\lambda_2^{-1}(K^\alpha + v_\varphi) + \lambda_1^{-1}(K^\alpha + \cot\varphi v)] \\
            &+ \alpha K^{\alpha + 1}[(\lambda_2^{-1})_\varphi(K^\alpha + v_\varphi) + (\lambda_1^{-1})_\varphi (K^\alpha + \cot\varphi v)] \\
            &= \alpha K^{\alpha+1} \lambda_2^{-1} v_{\varphi\varphi} + B(\varphi,t) v_\varphi + C(\varphi,t) v.
        \end{split}
    \end{align}
    Note that the principal part is $\alpha K^{\alpha+1} \lambda_2^{-1} v_{\varphi\varphi}$, so the equation \eqref{v_t split ver} is strictly parabolic. Also, $C(\varphi,t)$ is bounded on $[0, \frac{\pi}{2}] \times [0,T]$. Observe that, by the symmetry of the solution, $v = 0$ at the boundary points $\varphi = 0, \frac{\pi}{2}$ for all $t \in [0,T]$. Since we assumed that $v \le 0$ for $\varphi \in [0, \frac{\pi}{2}]$, the maximum principle yields the result.
\end{proof}

\bigskip

\section{Ancient sausage for flows by power of Gauss curvature} \label{sec 6}
\subsection{Construction for small power}\label{subsec 6.1}

Here we prove the existence of an \textit{ancient sausage}, namely an ancient solution asymptotic to a round cylinder, for $\alpha \in (1/2,1)$. A slight modification of the argument presented here also works for the case $\alpha = 1$, but we restrict attention to $\alpha \in (1/2,1)$, since the case $\alpha=1$ was already treated in \cite{choi2024uniqueness} in greater generality. Unless otherwise mentioned, we assume $\alpha \in (1/2,1)$ in the assertions and proofs in this subsection.

\medskip

Consider the sequence of convex closed smooth hypersurfaces $\Sigma^i$ which has $O(2)\times \mathbb{Z}_2$-symmetry in $\mathbb{R}^3$ and 
\[ K^\alpha(\varphi ) = (\cos^2\varphi +i^{-2})^{1/2} .  \] By \cite{nirenberg1953weyl}, $\Sigma^i$ exists and it is unique modulo translations in $\mathbb{R}^3$. Here we impose $O(2)\times \mathbb{Z}_2$-symmetry with respect to coordinate axes in $\mathbb{R}^3$ and hence such a solution is unique. For each $\Sigma^i$, there exists a unique smooth $\alpha$-GCF for time $t\in[0,T_i)$. After a time translation, we denote this solution by 
\[\Sigma^i_t , \quad \text{for } t\in [-T_i,0).\] It follows that $\Sigma^i_t $ preserves $O(2)\times \mathbb{Z}_2$ symmetry.

\begin{lemma} \label{lemma-cigarproperty} For $\alpha\in(1/2,1)$, the $\alpha$-Gauss curvature flows $\Sigma^i_t$, for $t\in[-T_i,0)$, and initial datum satisfy the following properties: 

\begin{enumerate}[(1)]
\item  $K^\alpha(\varphi,t) \ge 
\cos\varphi$, for $\varphi\in [0,\frac{\pi}{2}] $, $t\in[-T_i,0)$.
\item   $\partial_\varphi K^\alpha (\varphi,t)  <0$ for $\varphi\in (0,\frac{\pi}{2}) $, $t\in[-T_i,0)$. 
\item $T_i\to \infty $ as $i\to \infty$.
\item The displacement of initial datum $h_i(-T_i)$ is nondecrasing and $\lim_{i\to \infty} h_i(-T_i) = r_\alpha $.
\item The displacement $l_i(t)$ satisfies $|l_i(t)|\ge |t|$. 
\end{enumerate}

\begin{proof}

(1): Let $u = K^\alpha - \cos\varphi$, whose initial data at $t=0$ is given by
\begin{equation}
u(\varphi, -T_i) = (i^{-2} + \cos^2\varphi)^{\frac{1}{2}} - \cos\varphi \ge (\cos^2\varphi)^{\frac{1}{2}} - \cos\varphi = 0.    
\end{equation}
Then Lemma \ref{barrier lemma} gives the result.

(2): Define $u = \partial_{ \varphi} K^\alpha$. Then
\begin{equation}
u(\varphi, -T_i) = -\frac{\sin(2\varphi)}{2(i^{-2} + \cos^2\varphi)^{\frac{1}{2}}} \le 0.
    \end{equation}
Now Lemma \ref{K^alpha monotonicity lemma} yields the result.

(4): Recall that Lemma \ref{principal curvatures} gives
\begin{equation}
K = \lambda_1 \lambda_2 = \frac{1}{(S+S_{\varphi\varphi})(S + \cot\varphi S_\varphi)}.
\end{equation}
Denote $A = S+S_{\varphi\varphi}$, $B = S + \cot\varphi S_\varphi$ so that $K = (AB)^{-1}$.
Observe that
\begin{equation}
B\sin\varphi  = S \sin\varphi + S_\varphi \cos\varphi.    
\end{equation}
Differentiating this, we have
\begin{equation}
\frac{d}{d\varphi}(B\sin\varphi ) = A \cos\varphi,    
\end{equation}
so
\begin{equation}
\label{derivative of sin phi B}
\frac{d}{d\varphi}\left(\frac{1}{2}(B\sin\varphi )^2\right) = AB \sin\varphi \cos\varphi = \frac{\sin\varphi \cos\varphi}{K}.
\end{equation}

Observe that  at the equator $\varphi = \frac{\pi}{2}$, $ B \sin\varphi  = S(\frac{\pi}{2}) = h$. 
Moreover, as $\varphi \to 0^+$, $B \sin\varphi  \to 0$ because $\sin\varphi \to 0$ and $S_\varphi(\varphi)\to S_{\varphi}(0)=0$ by the symmetry. Integrating \eqref{derivative of sin phi B} from 0 to $\frac{\pi}{2}$, we have
\begin{equation}
\label{h formula}
    h^2 = 2 \int_0^{\frac{\pi}{2}} \frac{\sin\varphi \cos\varphi}{K} d\varphi.
\end{equation}
Putting $K = (\cos^2\varphi +i^{-2})^{1/2}$ and substituting $y = \cos^2 \varphi$, we have
\begin{equation}
            h_i(-T_i)^2 = 2 \int_0^{\frac{\pi}{2}} \frac{\sin\varphi \cos\varphi}{(\cos^2\varphi +i^{-2})^{1/2}} d\varphi = \int_0^1 (y + i^{-2})^{-\frac{1}{2\alpha}} dy,
\end{equation}
where $h_i$ is the displacement $h(t)$ of $\Sigma^i_t$. Then, we observe that the integrand decreases as $i$ gets larger. Moreover,
\begin{equation}
\lim_{i \to \infty} h_i(-T_i)^2 = \int_0^1 y^{-\frac{1}{2\alpha}} dy = \frac{2\alpha}{2\alpha - 1} = r_\alpha^2.    
\end{equation}
Hence, we get $h_i(-T_i) \nearrow r_\alpha$.

(3): Since $\lambda_1$ is the curvature of a convex curve, we have
\begin{equation}
\frac{dr}{d\varphi} = \frac{\cos\varphi}{\lambda_1}, \quad \frac{dz}{d\varphi} = - \frac{\sin\varphi}{\lambda_1},    
\end{equation}
and the rotational curvature is given by
\begin{equation}
\lambda_2 = \frac{\sin\varphi}{r}.    
\end{equation}
Hence, $\Sigma^i_{-T_i}$ satisfies
\begin{align}
    \begin{split}
        l_i(-T_i) &= \int_0^{\frac{\pi}{2}} \frac{\sin\varphi}{\lambda_1^i(\varphi, -T_i)} d\varphi\\
        &= \int_0^{\frac{\pi}{2}} \frac{\sin^2\varphi}{K_i(\varphi,-T_i) r(\varphi)} d\varphi \ge \frac{1}{h_i(-T_i)} \int_0^{\frac{\pi}{2}} \frac{\sin^2\varphi}{K_i(\varphi,-T_i)} d\varphi,
    \end{split}
\end{align}
where $l_i,K_i,\lambda_1^i$ are $l,K,\lambda_1$ of $\Sigma^i_t$. Then, putting $K_i^\alpha(-T_i) = (\cos^2\varphi +i^{-2})^{1/2}$ and substituting $y = \cos \varphi$, we have
\begin{align}
    \begin{split}
        \int_0^{\frac{\pi}{2}}\frac{\sin^2\varphi}{K_i(\varphi,-T_i)} d\varphi &= \int_0^1 \frac{(1-y^2)^{\frac{1}{2}}}{(i^{-2} + y^2)^{\frac{1}{2\alpha}}}dy \ge \frac{1}{\sqrt{2}}\int_0^{\frac{1}{\sqrt{2}}} (i^{-2} + y^2)^{-\frac{1}{2\alpha}} dy \\
        &=  \frac{1}{\sqrt{2}} i^{\frac{1}{\alpha}-1} \int_0^{\frac{i}{\sqrt{2}}} (1 + u^2)^{-\frac{1}{2\alpha}} du \ge c_\alpha i^{\frac{1}{\alpha}-1}.
    \end{split}
\end{align}
Thus 
\begin{equation}
    \label{l_i(-T_i)}
    l_i(-T_i) \ge c_\alpha \cdot \frac{i^{\frac{1}{\alpha}-1}}{h_i(-T_i)}.
\end{equation}
Using the convexity, we have
\begin{equation}
V_i(-T_i) \ge \frac{2\pi}{3}h_i(-T_i)^2 l_i(-T_i) \ge c_\alpha\frac{2\pi}{3}h_i(-T_i) i^{\frac{1}{\alpha}-1},    
\end{equation}
where $V_i(t)$ is the volume of the convex body bounded by $\Sigma^i_t$. Then, for large $i$ we have $h_i(-T_i) \ge \frac{r_\alpha}{2}$ by (4). Since we are assuming $\alpha\in (1/2,1)$, $V_i(-T_i) \to \infty$ as $i \to \infty$. This implies
\begin{equation}
T_i \ge \frac{V_i(-T_i)}{2\pi r_\alpha^2} \to \infty.    
\end{equation}

(5): By (1), at the north pole $\varphi = 0$ we have
    \begin{equation}
     \label{-l'(t)}
     -l'(t) = K^\alpha(0,t) \ge \cos 0 = 1.    
    \end{equation}
     Integrating from $t$ to $0$ gives 
    \begin{equation}
     l(t) \ge -t        
    \end{equation}
    for $t \in [-T_i, 0]$.
\end{proof}

\end{lemma}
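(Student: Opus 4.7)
The plan is to dispatch (1) and (2) by checking the hypothesis of the a priori lemmas from Section 5 at the initial time $t=-T_i$, then to derive a closed-form identity for the displacement $h$ in terms of the Gauss curvature via the support function ODE, and finally to exploit this identity together with the convexity of $\Sigma^i_{-T_i}$ to produce the needed lower bounds on volume and lifespan.

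For (1), I compute the initial value $K^\alpha(\varphi,-T_i)=(\cos^2\varphi+i^{-2})^{1/2}\ge|\cos\varphi|$ directly, so Lemma \ref{barrier lemma} preserves the inequality $K^\alpha\ge|\cos\varphi|$ for all $t\in[-T_i,0)$. For (2), differentiating the prescribed initial Gauss curvature yields $\partial_\varphi K^\alpha(\varphi,-T_i)=-\sin\varphi\cos\varphi/(\cos^2\varphi+i^{-2})^{1/2}\le 0$ on $[0,\pi/2]$, so Lemma \ref{K^alpha monotonicity lemma} preserves this monotonicity. Item (5) is then immediate: evaluating (1) at $\varphi=0$ gives $-l'(t)=K^\alpha(0,t)\ge 1$, and integrating from $t$ to $0$ (where $l(0)=0$) yields $l(t)\ge|t|$.

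For (4), the key is an identity for $h$ purely in terms of $K$. Using the formulas $\lambda_1^{-1}=S+S_{\varphi\varphi}=:A$ and $\lambda_2^{-1}=S+\cot\varphi\,S_\varphi=:B$ from Lemma \ref{principal curvatures}, I notice $\frac{d}{d\varphi}(B\sin\varphi)=A\cos\varphi$, whence
\begin{equation}
\frac{d}{d\varphi}\Bigl(\tfrac12(B\sin\varphi)^2\Bigr)=AB\sin\varphi\cos\varphi=\frac{\sin\varphi\cos\varphi}{K}.
\end{equation}
Since $B\sin\varphi\to 0$ as $\varphi\to 0^+$ (by $O(2)\times\mathbb{Z}_2$-symmetry $S_\varphi(0)=0$) and $B\sin\varphi|_{\varphi=\pi/2}=S(\pi/2)=h$, integrating over $[0,\pi/2]$ gives $h^2=2\int_0^{\pi/2}(\sin\varphi\cos\varphi)/K\,d\varphi$. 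Plugging in the prescribed initial curvature and substituting $y=\cos^2\varphi$ reduces this to $h_i(-T_i)^2=\int_0^1(y+i^{-2})^{-1/(2\alpha)}dy$. Monotone convergence (the integrand is increasing in $i$ and integrable in the limit precisely because $\alpha>1/2$) yields $h_i(-T_i)^2\nearrow\int_0^1 y^{-1/(2\alpha)}dy=\frac{2\alpha}{2\alpha-1}=r_\alpha^2$.

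Item (3) is the main obstacle, since we must show the lifespan blows up even though $h_i(-T_i)$ stays bounded. The idea is to get a strong lower bound on $l_i(-T_i)$. Using the meridian parametrization, $l_i(-T_i)=\int_0^{\pi/2}\sin\varphi/\lambda_1^i\,d\varphi=\int_0^{\pi/2}\sin^2\varphi/(K_i r)\,d\varphi$, and bounding $r\le h_i(-T_i)$ gives $l_i(-T_i)\ge h_i(-T_i)^{-1}\int_0^{\pi/2}\sin^2\varphi/K_i\,d\varphi$. Substituting $y=\cos\varphi$ and retaining only $y\in[0,1/\sqrt{2}]$ (where $(1-y^2)^{1/2}\ge 1/\sqrt{2}$) converts the integral into $\int_0^{1/\sqrt2}(i^{-2}+y^2)^{-1/(2\alpha)}dy$. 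A scaling $u=iy$ extracts a factor $i^{1/\alpha-1}$ times a convergent integral $\int_0^\infty(1+u^2)^{-1/(2\alpha)}du$ (which is finite iff $\alpha>1/2$). Hence $l_i(-T_i)\ge c_\alpha i^{1/\alpha-1}/h_i(-T_i)$. Combining with $V_i(-T_i)\ge\tfrac{2\pi}{3}h_i^2 l_i\gtrsim i^{1/\alpha-1}\to\infty$ for $\alpha\in(1/2,1)$, and noting that the volume decays at the universal rate $-dV/dt=\int K^\alpha\,d\mu\le 2\pi r_\alpha^2$ once the body is trapped in the cylinder $B_{r_\alpha}\times\mathbb{R}$ (whose radius one checks using (4) to bound $h_i\le r_\alpha$), we conclude $T_i\ge V_i(-T_i)/(2\pi r_\alpha^2)\to\infty$. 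The subtlety here is ensuring the integrability $\int_0^\infty(1+u^2)^{-1/(2\alpha)}du<\infty$, which is exactly where the restriction $\alpha>1/2$ enters.
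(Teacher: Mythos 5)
Your proposal tracks the paper's proof essentially step for step: (1)--(2) are checked at the initial time and then preserved by Lemma \ref{barrier lemma} and Lemma \ref{K^alpha monotonicity lemma}; (4) rests on the support-function identity $\frac{d}{d\varphi}\bigl(\tfrac12(B\sin\varphi)^2\bigr)=\sin\varphi\cos\varphi/K$ and monotone convergence; (3) combines a lower bound for $l_i(-T_i)$ with a volume-decay rate; and (5) integrates $-l'(t)=K^\alpha(0,t)\ge1$. Incidentally, you are right that the integrand $(y+i^{-2})^{-1/(2\alpha)}$ is \emph{increasing} in $i$; this is what makes $h_i(-T_i)$ nondecreasing (the paper's ``decreases'' appears to be a typo).

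One attribution in (3) is off. You justify the rate bound $-\,dV_i/dt=\int K^\alpha\,d\mu\le 2\pi r_\alpha^2$ by saying it holds ``once the body is trapped in the cylinder $B_{r_\alpha}\times\mathbb{R}$.'' Cylinder containment alone does not give this: a convex body inside the cylinder can have $K$ arbitrarily small on a large portion of the Gauss sphere, and since $\alpha<1$ that makes $\int_{S^2}K^{\alpha-1}\,d\omega$ large. The bound actually comes from item (1) of the lemma itself: write $\int K^\alpha\,d\mu=\int_{S^2}K^{\alpha-1}\,d\omega$, use $K^\alpha\ge|\cos\varphi|$ together with $\alpha\le1$ to get $K^{\alpha-1}\le|\cos\varphi|^{1-1/\alpha}$, and integrate over $S^2$ to obtain $2\pi r_\alpha^2$ (this is exactly Proposition \ref{prop-volupperbd}). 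Since you have already established (1), this is an exposition slip rather than a missing ingredient, but the cylinder radius is the \emph{output} of this computation, not its input.
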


\bigskip

\begin{proposition}\label{prop-volupperbd} 
Let $V_i(t)$ be the volume of the convex body bounded by $\Sigma_t^i$. An upper bound of $V_i(t)$ is given by
\begin{equation}
 V_i(t) \le 2\pi r_\alpha^2(-t).   
\end{equation}

\begin{proof}
By Lemma \ref{lemma-cigarproperty} (1) and $Z_2$ symmetry $K^\alpha(\varphi,t) \ge |\cos \varphi|$.    Using rotational symmetry, we have
    \begin{align}
        \begin{split}
            -\frac{dV_i}{dt} &= \int K^{\alpha} d\mu = \int_{S^2} K^{\alpha-1} d\omega \\
            &= 2\pi\int_{0}^{\pi} |\sin\varphi| \cdot K^{\alpha-1}(\varphi) d\varphi \le 4\pi\int_0^{\frac{\pi}{2}}\sin\varphi (\cos\varphi)^{1-\frac{1}{\alpha}} d\varphi.
        \end{split}
    \end{align} In the last inequality, we used $\alpha\le 1$. Note that
    \begin{equation}
    \label{r_alpha}
    2\int_0^{\frac{\pi}{2}} \sin\varphi (\cos\varphi)^{1-\frac{1}{\alpha}} d\varphi = \frac{2\alpha}{2\alpha-1} = r_\alpha^2,
    \end{equation}
    so we have
    \[
    -\frac{dV}{dt} \le 2\pi r_\alpha^2. 
    \] The proposition follows by integrating it from $t$ to $0$. 
\end{proof}
\end{proposition}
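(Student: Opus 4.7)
The plan is to differentiate the enclosed volume in time, use the Gauss map to convert the resulting surface integral into an integral over $S^2$, exploit the $O(2)\times\mathbb{Z}_2$ symmetry to reduce to a one-dimensional integral in the polar angle $\varphi$, and then insert the barrier estimate $K^\alpha(\varphi,t)\ge |\cos\varphi|$ from Lemma \ref{lemma-cigarproperty}(1) to obtain a time-independent upper bound on $-dV_i/dt$.

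First I would write down
\[
-\frac{dV_i}{dt}=\int_{\Sigma_t^i} K^\alpha\, d\mu,
\]
and then use that strict convexity makes the Gauss map $\nu:\Sigma_t^i\to S^2$ a diffeomorphism with $d\omega=K\, d\mu$, so the right-hand side equals $\int_{S^2} K^{\alpha-1}\, d\omega$. Rotational symmetry in the $x_1x_2$-plane reduces this to $2\pi\int_0^\pi \sin\varphi\, K^{\alpha-1}(\varphi)\, d\varphi$, and the $\mathbb{Z}_2$-symmetry with respect to $\{x_3=0\}$ folds the integrand to twice its value on $[0,\pi/2]$.

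The key step is now to turn the lower bound $K^\alpha\ge|\cos\varphi|$ into an upper bound on $K^{\alpha-1}$. This is exactly where the hypothesis $\alpha\le 1$ is used: since $(\alpha-1)/\alpha\le 0$, raising both sides of $K^\alpha\ge\cos\varphi$ (on $[0,\pi/2]$) to the power $(\alpha-1)/\alpha$ reverses the inequality, giving $K^{\alpha-1}(\varphi)\le(\cos\varphi)^{1-1/\alpha}$. Consequently
\[
-\frac{dV_i}{dt}\le 4\pi\int_0^{\pi/2}\sin\varphi\,(\cos\varphi)^{1-1/\alpha}\, d\varphi,
\]
and the right-hand side is elementary: the substitution $u=\cos\varphi$ yields $2\int_0^1 u^{1-1/\alpha}\, du=2\alpha/(2\alpha-1)=r_\alpha^2$, so $-dV_i/dt\le 2\pi r_\alpha^2$.

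Finally I would integrate this in time from $t$ to $0$. Since $\Sigma_t^i$ is a compact smooth strictly convex $\alpha$-GCF with $\alpha>1/2>1/(n+2)$, it shrinks to a round point at the singular time by Andrews--Guan--Ni, so $V_i(0)=0$. This gives $V_i(t)\le 2\pi r_\alpha^2(-t)$ for $t\in[-T_i,0)$, as claimed. I do not expect a genuine obstacle here; the only substantive observation is that $\alpha\le 1$ is precisely the condition that lets the barrier from Lemma \ref{lemma-cigarproperty}(1) be converted into an upper bound on the speed $K^\alpha$ integrated against the spherical area element.
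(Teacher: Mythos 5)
Your proposal is correct and follows essentially the same route as the paper: differentiate the enclosed volume, pull back via the Gauss map to get $\int_{S^2}K^{\alpha-1}\,d\omega$, reduce by symmetry to a $\varphi$-integral, invoke Lemma \ref{lemma-cigarproperty}(1), and evaluate the resulting elementary integral to obtain $-dV_i/dt\le 2\pi r_\alpha^2$. The only differences are expository: you spell out why $\alpha\le 1$ lets the lower bound $K^\alpha\ge\cos\varphi$ become an upper bound on $K^{\alpha-1}$ (negative exponent reverses the inequality), and you state explicitly that $V_i(0)=0$ when integrating; both steps are implicit in the paper's proof.
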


\bigskip

\begin{proposition}\label{prop-h_ilowerbd}
     There exists $t_\alpha>-\infty$
 such that for $t\le t_\alpha$, 
 \begin{equation}
 K ( \tfrac{\pi}{2}, t) \le 
\frac{C_\alpha }{|l_i(t)|^2 }\le C_\alpha |t|^{-2}.
 \end{equation}
 This, in particular, implies that for $t\in [-T_i, t_\alpha]$, 
 \begin{equation}
   h_i(t) \ge h_i(-T_i) - \int_{-T_i}^{t} C_\alpha^\alpha |s|^{-2\alpha }ds\ge h_i(-T_i)- \frac{C_\alpha ^\alpha}{2\alpha-1} |t|^{1-2\alpha}.  
 \end{equation}
\end{proposition}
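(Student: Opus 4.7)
The plan is to apply the touching--circle estimate of Proposition \ref{prop-touching} to the meridian profile curve of $\Sigma^i_t$, combine it with a convexity bound for the rotational principal curvature $\lambda_2$, and then transfer the resulting pointwise bound on $K$ to the equator $\varphi=\pi/2$ via the monotonicity of $K^\alpha$ in the normal angle proved in Lemma \ref{K^alpha monotonicity lemma}. Integrating the resulting ODE for $h_i$ then yields the second assertion. For the choice of $t_\alpha$, note that $h_i$ is nonincreasing under the flow (since $-h_i'=K^\alpha(\pi/2,t)\ge 0$) and bounded by $h_i(-T_i)\le r_\alpha$ by Lemma \ref{lemma-cigarproperty}(4), while $l_i(t)\ge|t|$ by part (5) of the same lemma. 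Setting $t_\alpha:=-2r_\alpha$ guarantees that $l_i(t)/2\in[h_i(t),l_i(t)]$ for every $t\le t_\alpha$, so $z=l_i/2$ is admissible in Proposition \ref{prop-touching}.

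Let $\Gamma^i_t:=\Sigma^i_t\cap\{x_2=0\}$; this is a smooth convex $Z_2^2$-symmetric closed curve with horizontal displacement $h_i(t)$ and vertical displacement $l_i(t)$, whose planar curvature at each point coincides with the principal curvature $\lambda_1$ of the surface at that point (Lemma \ref{principal curvatures}). Applying Proposition \ref{prop-touching} with $z=l_i(t)/2$ produces a point $\mathbf x^*=(r^*,z^*)\in\Gamma^i_t$ with $|z^*|\le l_i/2$ and
\[
\lambda_1(\mathbf x^*,t)\le\frac{2h_i}{(l_i/2)^2+h_i^2}\le\frac{8h_i}{l_i^2}.
\]
To control the rotational curvature at $\mathbf x^*$, observe that in the first quadrant $\Gamma^i_t$ lies outside the segment from $(h_i,0)$ to $(0,l_i)$, so $r^*\ge h_i(1-|z^*|/l_i)\ge h_i/2$. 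Since $\lambda_2=\sin\varphi^*/r^*\le 1/r^*$, this gives $\lambda_2(\mathbf x^*,t)\le 2/h_i$, and hence $K(\mathbf x^*,t)=\lambda_1\lambda_2\le 16/l_i^2$.

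Lemma \ref{K^alpha monotonicity lemma} together with the $Z_2$-symmetry of $\Sigma^i_t$ about $\{x_3=0\}$ implies that $K^\alpha(\cdot,t)$ attains its minimum on the surface at $\varphi=\pi/2$, so
\[
K(\pi/2,t)\le K(\mathbf x^*,t)\le 16/l_i(t)^2\le 16/|t|^2,
\]
which is the first assertion with $C_\alpha=16$. The identity $-h_i'(t)=K^\alpha(\pi/2,t)$ (from $S_t=-K^\alpha$ in a horizontal direction) then yields $-h_i'(t)\le C_\alpha^\alpha|t|^{-2\alpha}$. Integrating on $[-T_i,t]$ with $t\le t_\alpha$ and using $2\alpha>1$ to bound $\int_{-T_i}^{t}|s|^{-2\alpha}\,ds\le|t|^{1-2\alpha}/(2\alpha-1)$ delivers the second inequality.

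The main subtlety I expect is that Proposition \ref{prop-touching} controls only the planar curvature at an a priori unknown touching point, whereas the quantity to bound lives at the specific angle $\varphi=\pi/2$. The two devices overcoming this are (i) the choice $z=l_i/2$, which forces $\mathbf x^*$ to remain uniformly away from the poles and thereby makes the planar convexity bound $r^*\ge h_i/2$ available, and (ii) the preserved monotonicity of $K^\alpha$ in the normal angle from Lemma \ref{K^alpha monotonicity lemma}, which pushes the bound from the unknown $\mathbf x^*$ down to the fixed equatorial angle $\pi/2$.
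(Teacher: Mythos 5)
Your proof is correct and follows essentially the same route as the paper's: apply the touching-circle estimate (Proposition \ref{prop-touching}) to the meridian slice with $z=l_i/2$, bound $\lambda_2$ at the touching point via the convexity estimate $r^*\ge h_i/2$, and transfer the bound to $\varphi=\pi/2$ using the preserved monotonicity of $K^\alpha$ in $\varphi$, then integrate $-h_i'=K^\alpha(\pi/2,t)$. The only (cosmetic) difference is that you cite Lemma \ref{K^alpha monotonicity lemma} for the final transfer step, whereas strictly speaking the relevant fact is its conclusion for the specific flows $\Sigma^i_t$, recorded in Lemma \ref{lemma-cigarproperty}(2); you also make the choice $t_\alpha=-2r_\alpha$ and the segment argument for $r^*\ge h_i/2$ explicit, which the paper leaves terse.
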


\begin{proof}
    The second assertion follows by integrating the first assertion. The proof of the first assertion follows by a variant of the argument which appeared in \cite[Lemma 4.4]{bourni2021collapsing}. We work on the slice of $\Sigma^i_t$  with the $x_1x_3$-plane. Let us denote this $Z_2^2$-symmetric curve by $\Gamma^i_t :=\{(x_1,x_3)\,:\, (x_1,0,x_3) \in \Sigma^i_t\}$.

Since $l_i(t)\ge |t|$ and $h_i(t)\le r_\alpha $, there is $t_\alpha>-\infty$ such that $l_i(t)/2 \ge h_i(t)$ for all $t\le t_\alpha$. For such a $t\le t_\alpha$, let us apply Proposition \ref{prop-touching} for $z=\frac{l_i(t)}2$. Thus, there exists $\mathbf{x}\in \Gamma^i_t$, such that $|\langle \mathbf{x}, e_3\rangle | \le l_i(t)/2$ and the curvature of $\Gamma^i_t$ at $\mathbf{x}$, namely $\lambda_1(\mathbf{x})$,  satisfies 
\begin{equation}
    \lambda_1(\mathbf{x})\le \frac{8h_i(t)}{l_i^2(t)}.
\end{equation}
Moreover, by convexity, $\langle \mathbf{x},e_1\rangle \ge h_i(t)/2$ as otherwise the curve $\Gamma^i_t$ should be included in $\{|x_3|< l_i(t)\}$, a contradiction. This implies 
\begin{equation}
\lambda_2 (\mathbf{x})=\frac{\langle \nu,e_1\rangle }{\langle \mathbf{x}, e_1\rangle}\le  \frac{2}{h_i(t)}.    
\end{equation}
Now, the assertion follows since 
\begin{equation}
    K(\frac{\pi}{2},t)  \le \lambda_1(\mathbf{x})\lambda_2(\mathbf{x}) \le \frac{16}{ l_i^{2}(t)},
\end{equation}
and $l_i(t)\ge |t|$.   
\end{proof}

\bigskip

 We are now in a position to prove the main existence theorem for $\alpha \in (1/2,1)$. In addition, we establish some further properties of the solution beyond those stated in Theorem \ref{Main theorem (cylinder)}..
 
 \begin{theorem}[Theorem \ref{Main theorem (cylinder)} for $\alpha <1$]In the round closed cylinder $\overline{B}_{r_\alpha} \times \R \subset \R^3$, there exists a $O(2)\times Z_2$ symmetric, compact, strictly convex, smooth, ancient solution $\{\Sigma_t\}_{t \in (-\infty, 0)}$ to the $\alpha$-GCF for $\frac{1}{2} < \alpha < 1$, which does not lie in any smaller cylinder.  Moreover, at time $t=0$, the solution shrinks at the origin as a round point.    
\end{theorem}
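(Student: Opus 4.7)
The plan is to construct $\Sigma_t$ as a Hausdorff subsequential limit of the approximating flows $\Sigma^i_t$ from Subsection \ref{subsec 6.1}, using the uniform estimates from Lemma \ref{lemma-cigarproperty} and Propositions \ref{prop-volupperbd}, \ref{prop-h_ilowerbd}. Fix a decreasing sequence $t_k\to-\infty$ with $t_k\le t_\alpha$. For each $k$ and for $i$ large enough that $t_k\in(-T_i,0)$, the convex body $\mathcal{K}^i_{t_k}$ enclosed by $\Sigma^i_{t_k}$ satisfies: inclusion in $\overline{B}_{r_\alpha}\times\mathbb{R}$ (since $h_i(t_k)\le h_i(-T_i)\le r_\alpha$ by forward-time monotonicity of $h_i$ and Lemma \ref{lemma-cigarproperty}(4)); a horizontal lower bound $h_i(t_k)\ge h_i(-T_i)-\frac{C_\alpha^\alpha}{2\alpha-1}|t_k|^{1-2\alpha}$ from Proposition \ref{prop-h_ilowerbd}; a vertical lower bound $l_i(t_k)\ge|t_k|$ from Lemma \ref{lemma-cigarproperty}(5); and a vertical upper bound on $l_i(t_k)$ coming from combining the volume bound $V_i(t_k)\le 2\pi r_\alpha^2|t_k|$ of Proposition \ref{prop-volupperbd} with the horizontal lower bound just obtained. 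Hence $\{\mathcal{K}^i_{t_k}\}_i$ lies in a fixed compact family of convex bodies with interior volume uniformly bounded below. By the Blaschke selection theorem together with a diagonal argument over $k$, I extract a subsequence $i_n\to\infty$ and convex bodies $\mathcal{K}_{t_k}$ with $\mathcal{K}^{i_n}_{t_k}\to\mathcal{K}_{t_k}$ in Hausdorff distance for every $k$.

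Next I apply Lemma \ref{lemma-extra} at each base time $t_k$: the viscosity $\alpha$-GCF starting from $\partial\mathcal{K}_{t_k}$ exists on $[t_k,T'_k)$ for some extinction time $T'_k\le 0$, and $\Sigma^{i_n}_t\to\Sigma_t$ in Hausdorff distance for every $t\in(t_k,T'_k)$. Consistency across different $k$ assembles these into a single ancient viscosity solution $\Sigma_t$ on $(-\infty,T')$ with $T'=\lim_k T'_k$. That $T'=0$ follows because the uniform lower bounds on $h_i(t)$ and $l_i(t)$ pass to a strict positive volume lower bound for $\Sigma_t$ at every $t<0$, while Proposition \ref{prop-volupperbd} forces $V(t)\le 2\pi r_\alpha^2|t|\to 0$ as $t\to 0^-$.

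The stated properties are then immediate. $O(2)\times Z_2$-symmetry and inclusion in $\overline{B}_{r_\alpha}\times\mathbb{R}$ are preserved under Hausdorff limits. Passing the estimate of Proposition \ref{prop-h_ilowerbd} to the limit and invoking $h_i(-T_i)\to r_\alpha$ from Lemma \ref{lemma-cigarproperty}(4) gives
\begin{equation*}
    h(t)\ge r_\alpha-\frac{C_\alpha^\alpha}{2\alpha-1}|t|^{1-2\alpha},
\end{equation*}
so $h(t)\to r_\alpha$ as $t\to-\infty$, ruling out containment in any smaller cylinder. Smoothness and strict convexity on $(-\infty,0)$ follow from Andrews' interior regularity for $\alpha$-GCF with $\alpha>\tfrac{1}{n+2}=\tfrac14$: on each compact subinterval of $(-\infty,0)$ the uniform non-collapsing and bounded diameter, combined with the smoothness of the approximating $\Sigma^{i_n}_t$, upgrade the Hausdorff convergence to smooth convergence on compact subsets of spacetime and transfer strict convexity to $\Sigma_t$. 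Finally, since $\Sigma_t$ is a compact strictly convex ancient $\alpha$-GCF with $\alpha\in(\tfrac12,1)$ in $\mathbb{R}^3$, Andrews-Chen \cite{andrews2012surfaces} implies that its extinction at $t=0$ is at a round point, and the $O(2)\times Z_2$-symmetry forces this point to be the spacetime origin.

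The main obstacle will be justifying the smooth convergence step. While uniform displacement and volume bounds guarantee that the limit is a non-degenerate convex body on any compact subinterval of $(-\infty,0)$, upgrading Hausdorff convergence of viscosity solutions to smooth $C^\infty$ convergence requires uniform interior curvature estimates; these should follow from the speed lower bound $K^\alpha\ge|\cos\varphi|$ of Lemma \ref{barrier lemma} and the monotonicity $\partial_\varphi K^\alpha\le 0$ of Lemma \ref{K^alpha monotonicity lemma}, paralleling the arguments developed by the authors for $\alpha=1$ in \cite{choi2024uniqueness}. Once this regularity is in place, the remaining steps are comparison and compactness.
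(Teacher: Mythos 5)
Your construction of the limit (Blaschke selection plus a diagonal argument over base times $t_k\to-\infty$, followed by Lemma \ref{lemma-extra} to promote Hausdorff convergence of time slices to convergence of viscosity flows) matches the paper, as does the use of Proposition \ref{prop-h_ilowerbd} to pass the lower bound $h(t)\ge r_\alpha - C_\alpha'|t|^{1-2\alpha}$ to the limit and thereby rule out containment in a smaller cylinder. The part that does not go through is the claim that smoothness and strict convexity of $\Sigma_t$ follow by ``upgrading Hausdorff convergence to smooth convergence'' from the a priori estimates in Lemmas \ref{barrier lemma} and \ref{K^alpha monotonicity lemma}. The lower bound $K^\alpha\ge|\cos\varphi|$ degenerates precisely at the equator $\varphi=\pi/2$, and $\partial_\varphi K^\alpha\le0$ on $[0,\pi/2]$ tells you that the equator is the worst place, not a good one. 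In fact, by construction $K^\alpha(\tfrac{\pi}{2},-T_i)=i^{-1}\to0$, so there is no uniform equatorial curvature lower bound available from these lemmas; the approximating surfaces genuinely flatten toward the boundary cylinder near the equator. If this flattening persisted in the limit, $\Sigma_t$ would touch $\partial B_{r_\alpha}\times\mathbb{R}$ (as happens for $\alpha>1$ in Subsection~\ref{subsec 6.2}) and would be a non-smooth viscosity solution with a cylindrical flat region.

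What the paper actually does at this point is a contradiction argument which you do not have. Suppose $h(t)=r_\alpha$ for all $t<0$. Since $l(t)\ge h(t)$ (a consequence of Lemma \ref{K^alpha monotonicity lemma}: $-l'=K^\alpha(0)\ge K^\alpha(\tfrac{\pi}{2})=-h'$ gives $(l-h)'\le0$, and $l-h\to0$ at extinction), the limit $\Sigma_t$ would converge as $t\to0^-$ to a surface bounding a convex body with nonempty interior, forcing $\Sigma^{n_j}_t$ to enclose a fixed open set for all $t<0$ and large $n_j$; but $\Sigma^{n_j}_t$ shrinks to a point at $t=0$, a contradiction. Hence $h(t_0)<r_\alpha$ for some $t_0<0$, which means $\Sigma_{t_0}$ lies strictly inside $\Sigma_{t_1}$ for $t_1\ll t_0$. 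Only now can one invoke \cite[Proposition 2.8]{choi2024uniqueness} (interior regularity away from initial data for convex $\alpha$-GCF), which gives smoothness and strict convexity for $t\ge t_0$, and then \cite{andrews2016flow} for the centrally symmetric smooth strictly convex solution yields convergence to a round point. So the missing idea in your sketch is this detachment-from-the-cylinder argument via the shrinking-to-a-point contradiction; without it, the regularity step you flag as the ``main obstacle'' cannot be filled by Lemmas \ref{barrier lemma} and \ref{K^alpha monotonicity lemma} alone.

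Two smaller points: the paper quotes \cite{andrews2016flow} rather than \cite{andrews2012surfaces} for the final round-point convergence (both suffice here), and the existence of the limit flow on all of $(t_k,0)$ comes directly out of Lemma \ref{lemma-extra} because each approximator $\Sigma^{n_j}_t$ already survives until $t=0$; you do not separately need the volume-goes-to-zero argument to pin down the extinction time.
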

 
\begin{proof}
Consider the $O(2)\times \mathbb{Z}_2$-symmetric smooth $\alpha$-GCF $\Sigma^i_t$, for $t\in [-T_i,0)$, constructed at the beginning of this section. In view of the upper bound on the volume (Proposition \ref{prop-volupperbd}), the lower bound on the displacement $h_i(t)$ (Proposition \ref{prop-h_ilowerbd}), and convexity, we obtain a uniform upper bound on $l_i(t)$ for large $i$ when $|t|$ is large. More precisely, there exist $t_\alpha$ and $C_\alpha$ such that if $t\le t_\alpha$ then $l_i(t) \le C_\alpha |t|$ for all sufficiently large $i\ge i_0(t)$. Note also that there is a lower bound on $l_i(t)$ in Lemma \ref{lemma-cigarproperty} (5), an upper bound $h_i(t)\le r_\alpha$ (Lemma \ref{lemma-cigarproperty} (4)), and a lower bound on $h_i(t)$ (Proposition \ref{prop-h_ilowerbd}), all of which are uniform for large $i$. 

Choose a sequence of nonincreasing times $t_i\to -\infty$. Thanks to the displacement bounds mentioned above and the Blaschke selection theorem (together with a diagonal argument), we can extract a subsequence $n_j$ such that $\Sigma^{n_j}_{t_i}$ converges to a convex closed hypersurface $\Sigma_{t_i}$ as $n_j\to \infty$ in the Hausdorff distance, for all $t_i$. In view of Lemma \ref{lemma-extra} (compactness), this implies that, for all $t\in(-\infty,0)$, $\Sigma^{n_j}_t$ converges to $\Sigma_t$, and $\Sigma_t$ for $t\in(-\infty,0)$ is an ancient viscosity solution to the $\alpha$-GCF. Like $\Sigma^i_t$, the limit $\Sigma_t$ lies inside the cylinder $B_{r_\alpha}\times \mathbb{R}$. Proposition \ref{prop-h_ilowerbd} implies that the limit solution $\Sigma_t$ satisfies the displacement bound $h(t)\ge r_{\alpha} - C_\alpha' |t|^{1-2\alpha}$ for $t\le t_\alpha$. In particular, this shows that $\Sigma_t$ does not lie inside any smaller cylinder. Since $l(t)\to \infty$ and $h(t)\to r_\alpha$ as $t\to -\infty$, convexity implies that the solution sweeps out the interior of the cylinder as $t\to -\infty$.

It remains to prove that the flow shrinks to a round point at the space-time origin. This immediately follows if we show that the solution $\Sigma_t$ becomes smooth and strictly convex from some negative time $t\ge t_0>-\infty$: since the solution is centrally symmetric, \cite{andrews2016flow} then implies the result. In \cite[Proposition 2.8]{choi2024uniqueness}, it was shown that if a point on the solution is away from the initial surface, then the solution is locally smooth and strictly convex. From this observation, it suffices to prove that $h(t)$ is strictly less than $r_\alpha$ for some negative time $t\ge t_0>-\infty$. Indeed, this would imply that $\Sigma_{t_0}$ is strictly contained inside $\Sigma_{t_1}$ for sufficiently negative $t_1\ll t_0$, and thus $\Sigma_t$ is smooth for $t\ge t_0$. 

Suppose instead that $h(t)=r_\alpha$ for all $t<0$. Since $l(t)\ge h(t)$, this would imply that $\Sigma_t$ converges to a compact convex surface bounding a convex body with nonempty interior. In particular, this would imply that $\Sigma^{n_j}_t$ contains a nonempty open set in its interior for all $t<0$ and all large $n_j$. Using this, we can deduce that $\Sigma^{n_j}_t$ does not shrink to a point at $t=0$, a contradiction. This proves that $\Sigma_t$ becomes smooth and converges to a round point at the origin as $t\to 0^-$. 
\end{proof}

\bigskip

\subsection{Construction for large power}\label{subsec 6.2}

We construct an ancient sausage for large $\alpha>1$, which is relatively straightforward. In this range, the rotationally symemtric translators are touching the boundary of cylinder and we can directly glue two translators to make a compact ancient solution. 

\medskip 

Let $\Sigma^2 \subset \mathbb{R}^3$ be the translator which is asymptotic to $B_{r_{\alpha}}\times \mathbb{R}$, moves in the $+e_3$-direction and $0\in \Sigma$. According to Urbas \cite{urbas1988global, urbas1999complete}, there is a radially symmetric convex function $u$ on $B_{r_{\alpha}}$ such that $u(0,0)=0$, $\nabla u(B_{r_{\alpha}})= \mathbb{R}^2$ and  \[\Sigma = \partial \{ (x_1,x_2,x_3)\in \mathbb{R}^3 \,:\, x_3>  u(x_1,x_2) \} .\]
By a barrier argument, in \cite{urbas1999complete}, it was also shown that the graph of $u$ is not complete in the sense that $\lim_{|x|\to r_{\alpha}}u(x)<\infty$.  Here since it is rotationally symmetric case, we prove it by a direct computation.

\begin{lemma}
\label{alpha > 1 flat side}
    For the radially symmetric convex graphical function of translator $u : B_{r_\alpha} \to \R$ such that $u(0)=0$, we have
    \[
    \lim_{x \to \partial B_{r_\alpha}} u(x)=:M < \infty, 
    \]
    In particular, the translator is given by 
    \[\partial \{ (x_1,x_2,x_3)\in \mathbb{R}^3\,:\, x_3\ge u(x_1,x_2),\, (x_1,x_2)\in B_{r_\alpha}\} =  \mathrm{graph}_{B_{r_\alpha}}u \, \cup\, ( \partial B_{r_\alpha}\times [M,\infty)).\]
\end{lemma}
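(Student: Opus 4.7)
The plan is to use the rotational symmetry to reduce the translator PDE to a separable ODE in $v:=u'(r)$, solve it explicitly, and then analyze the blow-up of $v$ at $r_\alpha$ to determine whether its integral is finite. For the radially symmetric $u(r)$, $D^2 u$ has eigenvalues $u''(r)$ and $u'(r)/r$, so $\det D^2 u = u''\, u'/r$. The translator condition $K^\alpha = \la e_3, -\nu\ra = (1+|\na u|^2)^{-1/2}$, combined with the graphical Gauss-curvature formula $K = \det D^2 u / (1+|\na u|^2)^2$ in $\R^3$, gives
\[
\det D^2 u = (1+|\na u|^2)^{\,2 - \frac{1}{2\alpha}},
\]
which is equation (4.5) of \cite{urbas1999complete}. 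Setting $\beta := 1 - \tfrac{1}{2\alpha} > 0$, the equation separates as $v\,dv/(1+v^2)^{1+\beta} = r\,dr$, and integrating from $r=0$ with $v(0)=0$ yields
\[
(1+v^2)^{-\beta} = 1 - \beta r^2, \qquad v(r) = \bigl((1-\beta r^2)^{-1/\beta} - 1\bigr)^{1/2},
\]
defined on $[0,r_\alpha)$ with $r_\alpha^2 = 1/\beta = 2\alpha/(2\alpha-1)$ and $v(r)\to\infty$ as $r\to r_\alpha^-$.

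To decide whether $M := \lim_{r \to r_\alpha^-} u(r) = \int_0^{r_\alpha} v(s)\,ds$ is finite, I set $\eps := r_\alpha - r$. Then $1 - \beta r^2 = 2\beta r_\alpha \eps - \beta \eps^2 \sim 2\beta r_\alpha \eps$, hence $v(r) \sim (2\beta r_\alpha \eps)^{-1/(2\beta)}$ as $\eps \to 0^+$. The resulting integral near the boundary converges if and only if $1/(2\beta) < 1$, i.e.\ $\beta > 1/2$, which is equivalent to $\alpha > 1$; this proves the first claim. The second claim follows geometrically: since $u(r)\to M<\infty$ while $u'(r)\to \infty$ as $r\to r_\alpha^-$, the graph of $u$ meets $\p B_{r_\alpha}\times \R$ vertically at height $M$, so attaching the vertical half-cylinder $\p B_{r_\alpha}\times[M,\infty)$ to $\mathrm{graph}_{B_{r_\alpha}}u$ yields exactly the closed convex translating surface claimed.

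I do not foresee a real obstacle: rotational symmetry collapses the fully nonlinear Monge--Amp\`ere translator equation to an elementary separable ODE, and the blow-up analysis is a one-line power comparison. The only point demanding care is the bookkeeping of the exponent $2-\tfrac{1}{2\alpha}$ in the translator PDE, since it is precisely this exponent that makes the critical integrability threshold $\beta = 1/2$ correspond to $\alpha = 1$.
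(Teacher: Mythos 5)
Your proof is correct, and it takes a genuinely different route from the paper's. The paper works directly with the ODE written as $1 = (u''u')^\alpha / \bigl(r^\alpha(1+|u'|^2)^{2\alpha-\frac{1}{2}}\bigr)$ and, for $r$ close enough to $r_\alpha$ that $|u'|\ge 1$, replaces $(1+|u'|^2)^{\frac{1}{2}-2\alpha}$ by power bounds in $|u'|$ alone; this turns the equation into a two-sided inequality for $\bigl((u')^{\frac{1-2\alpha}{\alpha}}\bigr)'$, which after integration yields the upper bound $u'\le C_\alpha(r_\alpha - r)^{\frac{\alpha}{1-2\alpha}}$, whose integrability when $\alpha>1$ gives $M<\infty$. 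You instead observe that in the variable $v=u'$ the radial translator equation $vv'/r=(1+v^2)^{2-\frac{1}{2\alpha}}$ is exactly separable, solve it in closed form as $(1+v^2)^{-\beta}=1-\beta r^2$ with $\beta = 1-\tfrac{1}{2\alpha}$, and read off the precise blow-up rate $v\sim (2\beta r_\alpha(r_\alpha-r))^{-1/(2\beta)}$. Your argument buys more: it gives a clean formula for $u'$, recovers $r_\alpha^2=1/\beta=2\alpha/(2\alpha-1)$ as a byproduct, and yields the sharp characterization that $M<\infty$ \emph{if and only if} $\alpha>1$, whereas the paper's inequality argument only establishes the implication needed. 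The paper's route is slightly more robust in spirit (it would survive perturbations that destroy exact separability), but for this rotationally symmetric ODE your explicit solution is the cleaner and more informative choice, and the computations check out.
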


\begin{proof}
We consider $u = u(r)$ as a radial function. Then the translator equation is given by
   \begin{equation}
    1=\frac{(u''u')^\alpha}{r^\alpha(1+|u'|^2)^{2\alpha-\frac{1}{2}}}.
\end{equation}
Take $\tilde{r} > 0$ so that $|u'| \ge 1$ for all $r \ge  \tilde{r}$. Since $|u'|^2 \le 1 + |u'|^2$, for $r \ge \tilde{r}$ we have
\[
2^{\frac{1}{2} - 2\alpha} |u'|^{1 - 4\alpha} \le (1 + |u'|^2)^{\frac{1}{2} - 2\alpha} \le |u'|^{1 -4\alpha}.
\]
Thus for $r \ge \tilde{r}$
\[
2^{\frac{1}{2} - 2\alpha} r_\alpha^{-\alpha} (u'')^\alpha (u')^{1-3\alpha} \le 1 \le \tilde{r}^{-\alpha} (u'')^\alpha (u')^{1-3\alpha}. 
\]
Then
\[
u'' (u') ^{\frac{1 - 3\alpha}{\alpha}} = \frac{\alpha}{1-2\alpha} ((u')^{\frac{1 -2\alpha}{\alpha}})' \le 2^{2 - \frac{1}{2\alpha}} r_\alpha,
\]
and equivalently
\[
((u')^{\frac{1 -2\alpha}{\alpha}})' \le -C_\alpha < 0
\]
where $C_\alpha > 0$ is a positive constant depending on $\alpha$. Integrating from $r$ to $r_\alpha$, we have
\begin{equation}
   (u')^{\frac{1-2\alpha}{\alpha}} \ge (u'(r_\alpha))^{\frac{1-2\alpha}{\alpha}} + C_\alpha(r_\alpha - r) \ge C_\alpha(r_\alpha - r). 
\end{equation}
Applying the power, we finally get
\begin{equation}
    u' \le C_\alpha(r_\alpha - r)^{\frac{\alpha}{1-2\alpha}}.
\end{equation}
When $\alpha > 1$, $-1 < \frac{\alpha}{1-2\alpha} < 0$, so the right-hand side is integrable. Thus,
\begin{equation}
    \int_r^{r_\alpha} |u'| dr<+\infty.
\end{equation}
\end{proof}

\bigskip

Now we build the ancient sausage by attaching two translators with the cylinder:

\begin{figure}[htbp]
    \centering
    \includegraphics[width=0.8\linewidth]{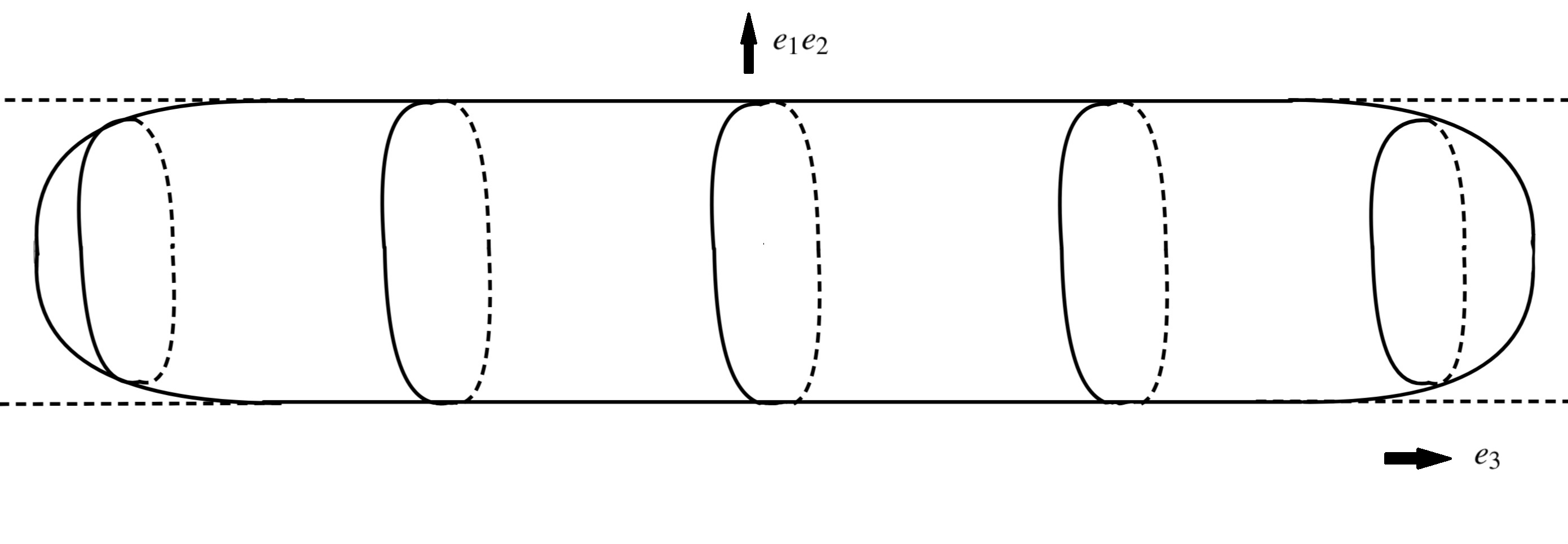}
    \caption{Ancient sausage for $\alpha > 1$}
    \label{ancient sausage for alpha > 1}
\end{figure}

\begin{theorem}[Theorem \ref{Main theorem (cylinder)} for $\alpha>1$]    In the round closed cylinder $\overline{B}_{r_\alpha} \times \R \subset \R^3$, there exists a $O(2)\times Z_2$ symmetric, compact, convex, ancient solution $\{\Sigma_t\}_{t \in (-\infty, T)}$ to the $\alpha$-GCF for $\alpha>1$. This solution does not lie in any smaller cylinder. Additionally the solution satisfies the followings:
    \begin{enumerate}[\, (1)]
\item For $t\le 0$, $\Sigma_t$ is made by gluing two translators coming from two opposites: 
\begin{equation*} \begin{aligned} \Sigma_t &= \partial \{(x_1,x_2,x_3)\,:\, u(x_1,x_2)-M+t< x_3 < -u(x_1,x_2)+M-t   \}  \\
&= \mathrm{graph}_{B_{r_\alpha}} (u-M+t)\,  \cup\, (  \partial B_{r_\alpha} \times [t,-t] )\, \cup\,  \mathrm{graph}_{B_{r_\alpha}} (-u+M-t) .\end{aligned}\end{equation*}
Here, $u:B_{r_{\alpha}}\to \mathbb{R}$ is the convex radially symmetric graph function representing the translator with minimum height $\min u =u(0)=0$, and $M=\lim_{x\to \partial B_{r_\alpha}} u(x)<\infty$.

\item For $t>0$, $\Sigma_t$ becomes smooth strictly convex and converges to a round point at the origin as $t\to T$. The extinction time $T>0$ depends only on $\alpha$ and it is strictly less than $M$.
      \end{enumerate}

\end{theorem}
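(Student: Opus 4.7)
The proof proceeds in three stages: an explicit verification for $t \le 0$, forward continuation as a viscosity solution for $t > 0$, and analysis of the extinction behavior.

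For the first stage, one first checks that the given formula defines a valid closed convex hypersurface. Lemma \ref{alpha > 1 flat side} supplies $u$ with $|Du(x)| \to \infty$ as $|x| \to r_\alpha$, so the translator graphs meet the cylinder with a common vertical tangent plane; a direct calculation of principal curvatures at the seam, using the translator equation $K^\alpha = |\nu \cdot e_3|$ and the formula $\lambda_2 = \sin\varphi / r$, shows that both rotational curvatures equal $1/r_\alpha$ and both axial curvatures tend to $0$, so the surface is $C^{1,1}$ across the seam. The $\alpha$-GCF is then satisfied pointwise on each open piece: the two graphical pieces evolve as Urbas translators with unit vertical speed by the normalization of $r_\alpha$, while the cylindrical ring has $K \equiv 0$ so every point on it is classically stationary (the seam heights $\pm t$ nevertheless change because the translator pieces push the seam up and down). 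To promote this piecewise-classical identity to a viscosity identity in the sense of Definition \ref{def-weaksol}, the plan is to approximate $\Sigma_{t_0}$ for a fixed $t_0 \ll 0$ from both inside and outside by smooth strictly convex bodies, evolve each approximation by Proposition \ref{prop:viscosity_uniqueness}, and invoke Lemma \ref{lemma-extra} to obtain Hausdorff limits. Since the explicit $\Sigma_t$ serves as a two-sided barrier (any smooth strictly convex competitor stays on the correct side of each translator piece and cannot cross the vertical cylinder by the comparison principle), the inner and outer limits coincide with $\Sigma_t$, hence $\Sigma_t$ is a viscosity solution.

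For the second stage, Proposition \ref{prop:viscosity_uniqueness} applied to the compact convex body bounded by $\Sigma_0$ yields the unique viscosity solution on a maximal interval $[0, T)$ with $O(2) \times Z_2$ symmetry preserved by uniqueness. To obtain smoothness, strict convexity, and convergence to a round point, note that $\Sigma_0$ is strictly convex as a body (no flat faces) and $C^1$ with a continuous tangent plane across the seam. Andrews's regularity theorem then provides smoothness at positive times, and the convergence result for centrally symmetric flows in \cite{andrews2016flow} yields the round-point limit, which by symmetry must occur at the origin.

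For the bound $T < M$, inspect the apex $(0, 0, M) \in \Sigma_0$: the outward normal is $e_3$ and the surface coincides with the translator there, so $K^\alpha = 1$ initially, giving $-l'(0^+) \ge 1$. Since $\Sigma_0$ satisfies the translator identity $K^\alpha \ge |\cos\varphi|$ pointwise on both graphical pieces (the cylindrical ring has zero length at $t = 0$), Lemma \ref{barrier lemma} applied to the smooth forward flow preserves $K^\alpha \ge |\cos\varphi|$ for $t > 0$, and the strong maximum principle upgrades this to the strict inequality once the surface is smooth; in particular $K^\alpha(\textrm{apex}, t) > 1$ for $t > 0$. Integrating $-l'(t) = K^\alpha(\textrm{apex}, t)$ over $[0, T)$ gives $M = l(0) - l(T) = \int_0^T K^\alpha > T$, so $T < M$. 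The main obstacle I expect is the viscosity verification in Stage 1: $\Sigma_t$ is only $C^{1,1}$ across the seams, so the piecewise-classical identity does not immediately upgrade to Definition \ref{def-weaksol}. The coordinated two-sided barrier construction across the three pieces (two graphical translators and the cylindrical ring) is the substantive content, and it requires some care to ensure that the inner and outer smooth approximations both converge to the same limit, rather than merely producing a subsolution and a supersolution.
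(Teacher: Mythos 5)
Your overall structure mirrors the paper's, but you leave the crucial step unresolved and you acknowledge as much at the end. The substance of the theorem for $t\le 0$ is precisely that the explicit intersection-of-translators surface $\bar\Sigma_t$ \emph{is} the viscosity solution, and your plan — approximate from both sides, pass to a limit, and hope both limits coincide with the explicit surface — does not by itself close this loop. The paper's argument is cleaner and it is worth internalizing: (i) $\bar\Sigma_t$ is a viscosity \emph{subsolution} for \emph{all} $t<M$ simply because it encloses the intersection of two regions each swept by a (viscosity) solution, and the intersection of two convex bodies moving by a shrinking flow automatically has the one-sided comparison property of Definition \ref{def-weaksol}; (ii) $\bar\Sigma_t$ is also a viscosity \emph{supersolution} for $t<0$ because any smooth strictly convex competitor enclosing $\bar\Sigma_{t_1}$ can only make outer contact at a point away from the cylindrical ring, and at such a point $\bar\Sigma_t$ is locally the graph of a genuine smooth translator, so the ordinary comparison principle applies. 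Being both a sub- and supersolution, $\bar\Sigma_t$ is the unique viscosity solution for $t\le 0$ by Proposition \ref{prop:viscosity_uniqueness}. No approximation argument, no seam regularity, no curvature matching across the seam is needed — indeed the paper never claims or uses $C^{1,1}$ at the seam for the sausage.

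Two smaller points. First, your appeal to ``Andrews's regularity theorem'' for smoothness at $t>0$ is too glib: the initial surface $\Sigma_0$ touches the cylinder, so one must first show the solution moves strictly inward everywhere before invoking interior regularity; the paper does this by comparing $\Sigma_t$ to $\Sigma_{t'}$ for $t'\ll t$ and citing \cite[Proposition 2.8]{choi2024uniqueness}. Second, your argument for $T<M$ (apply Lemma \ref{barrier lemma} to the forward flow, upgrade to strict inequality at the apex by the strong maximum principle, integrate the speed of $l$) is a valid alternative, but it is heavier machinery than necessary: the paper simply notes that $\bar\Sigma_t$ remains a subsolution up to $t=M$, vanishes there, and encloses $\Sigma_t$ for $t\ge 0$, which already forces $T<M$.
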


\begin{proof} Consider a family of convex surfaces $\{\bar \Sigma_t\}_{t\in(-\infty,M)}$, where $\bar \Sigma_t$ is the boundary of the convex region enclosed by two translators coming from opposite directions:
\[
\bar \Sigma_t := \partial \{ (x_1,x_2,x_3) : u(x_1,x_2)-M+t < x_3 < -u(x_1,x_2)+M-t\}.
\]
Since the enclosed region is the intersection of the regions enclosed by the two translators, $\bar \Sigma_t$ is a (viscosity) subsolution to the $\alpha$-GCF for all $t\in(-\infty,M)$. Moreover, $\bar \Sigma_t$ also serves as a viscosity supersolution for $t\in(-\infty,0)$. Indeed, a smooth strictly convex solution can only touch $\bar \Sigma_t$ from the outside at points away from the cylinder, where $\bar \Sigma_t$ is itself smooth solution (being part of a translator.)

Define $\Sigma_t$ by setting $\Sigma_t = \bar \Sigma_t$ for $t\le -1$, and for $t\ge -1$, let $\Sigma_t$ denote the unique (viscosity) solution to the $\alpha$-GCF running from $\Sigma_{-1}$ at time $t=-1$. By uniqueness and the preceding observation, $\Sigma_t = \bar \Sigma_t$ for all $t\le 0$. For $t>0$, since $\bar \Sigma_t$ is a supersolution, $\Sigma_t$ must lie inside the region enclosed by $\bar \Sigma_t$. As $\bar \Sigma_t$ lies strictly inside the cylinder for $t>0$, it follows that $\Sigma_t$ also lies strictly inside. Comparing $\Sigma_t$ with $\Sigma_{t'}$ for some $t'\ll t$, one sees that every point of $\Sigma_t$ has moved inward relative to $\Sigma_{t'}$. By \cite[Proposition 2.8]{choi2024uniqueness}, the solution therefore becomes smooth and strictly convex for $t>0$. Since $\Sigma_t$ is centrally symmetric, \cite{andrews2016flow} implies that $\Sigma_t$ converges to a round point as $t\to T=T(\alpha)$. Finally, because $\bar \Sigma_t$ is a subsolution that vanishes at $t=M$, it follows that $T<M$.

\end{proof}

\section*{Acknowledgments}
The authors were supported by  the National Research Foundation(NRF) grant funded by the Korea government(MSIT) (RS-2023-00219980). BC has been partially supported by NRF of Korea grant No. 2022R1C1C1013511. KC has been partially supported by KIAS Individual Grant MG078902 and RS2024-00345403 funded by the Korea government (MSIT). 

We would like to thank Theodora Bourni and Mat Langford for their interest and insightful comments.

\bibliographystyle{alpha}
\bibliography{bib} 

\end{document}